\newtheorem{theorem}{Theorem}[section]
\newtheorem{lemma}[theorem]{Lemma}
\newtheorem{observation}[theorem]{Observation}
\newtheorem{proposition}[theorem]{Proposition}
\newtheorem{corollary}[theorem]{Corollary}
\newtheorem{conjecture}[theorem]{Conjecture}
\newtheorem{question}[theorem]{Question}
\theoremstyle{definition}
\newtheorem{definition}[theorem]{Definition}
\newtheorem{example}[theorem]{Example}
\newtheorem{remark}[theorem]{Remark}
\DeclareMathOperator{\sig}{sig}
\DeclareMathOperator{\Sig}{Sig}
\DeclareMathOperator{\unsat}{unsat}
\DeclareMathOperator{\supp}{supp}
\newcommand{\tree}[1]{\mathrm{Tree}(#1)}
\newcommand{\dir}[1]{\mathrm{dir}(#1)}
\newcommand{\dd}[1]{\mathrm{dd}(#1)}
\newcommand{\eslide}[1]{\ensuremath{\mathcal{E}(Q_{#1})}}
\newcommand{\eslidesig}[2][]{\ensuremath{\mathcal{E}_{#1}(#2)}}
\newcommand{\stacking}[2]{\genfrac{}{}{0pt}{}{#1}{#2}}
\newcommand{\power}[1]{\mathcal{P}(#1)}
\newcommand{\powernon}[1]{\mathcal{P}_{\geq 1}^{#1}}
\newcommand{\ptwo}[1]{\mathcal{P}_{\geq 2}^{#1}}
\newcommand{\naturalnumbers}{\ensuremath{\mathbb{N}}}
\newcommand{\eps}{\varepsilon}
\newcommand{\excess}[2]{\ensuremath{\eps_{#1}^{#2}}}
\newcommand{\supersat}[1][n]{\ensuremath{\mathcal{SS}_{#1}}}
\newcommand{\capr}[1]{\ensuremath{\pi_{#1}}}
\newcommand{\contract}[2][n]{\ensuremath{Q_{#1}/{#2}}}
\newcommand{\contractb}[2][n]{\ensuremath{Q_{#1}/\bar{#2}}}
\newcommand{\red}[2][n]{\ensuremath{\mathrm{RSig}_{#2}(Q_{#1})}}
\newcommand{\rtree}[2][n]{\ensuremath{\mathrm{RTree}_{#2}(Q_{#1})}}
\newlength{\hspacing}
\newlength{\vspacing}
\begin{document}

\title{Characterisation and classification of signatures of spanning trees of the $n$--cube}

\author{Howida A.\ Al Fran, David J.\ W.\ Simpson and Christopher P.\ Tuffley
\\ Massey University, New Zealand}

\maketitle

\begin{abstract} 
The signature of a spanning tree $T$ of the $n$-cube $Q_n$ is the $n$--tuple 
\[
\sig(T)=(a_1,a_2,\dots,a_n)
\]
such that $a_i$ is the number of edges of $T$ in the $i$th direction. We characterise the $n$--tuples that can occur as the signature of a spanning tree, and classify a signature $\mathcal{S}$ as \emph{reducible} or \emph{irreducible} according to whether or not there is a proper nonempty subset $R$ of $[n]$
such that restricting $\mathcal{S}$ to the indices in $R$ gives a signature of $Q_{|R|}$. 
If so, we say moreover that $\mathcal{S}$ and $T$ \emph{reduce over} $R$. 

We show that reducibility places strict structural constraints on $T$. In particular, if $T$ reduces over a set of size $r$ then $T$ decomposes as a sum of $2^r$ spanning trees of $Q_{n-r}$, together with a spanning tree of a certain contraction of $Q_n$ with underlying simple graph $Q_r$. 
Moreover, this decomposition is realised by an isomorphism of \emph{edge slide graphs}, where the edge slide graph of $Q_n$ is the graph $\eslide{n}$ 
on the spanning trees of $Q_n$, 
with an edge between two trees if and only if they are related by an \emph{edge slide}. An edge slide is an operation on spanning trees of the $n$--cube given by ``sliding'' an edge of a spanning tree across a $2$--dimensional face of the cube to get a second spanning tree. 

The signature of a spanning tree is invariant under edge slides, so the subgraph $\eslidesig{\mathcal{S}}$ of $\eslide{n}$ induced by the trees with signature $\mathcal{S}$ is a union of one or more connected components of $\eslide{n}$. Reducible signatures may be further divided into \emph{strictly reducible} and \emph{quasi-irreducible} signatures, and 
as an application of our results we show that $\eslidesig{\mathcal{S}}$ is disconnected if $\mathcal{S}$ is strictly reducible. We conjecture that the converse is also true. 
If true, this would imply that the connected components of \eslide{n}\ can be characterised in terms of signatures of spanning trees of subcubes.
\end{abstract}

\section{Introduction}
The $n$--cube is the graph $Q_n$ whose vertices are the subsets of the set $[n]=\{1,2,\dots,n\}$, with an edge between $X$ and $Y$ if they differ by the addition or removal of a single element. The element added or removed is the \emph{direction} of the edge. Given a spanning tree $T$ of $Q_n$, we may then define the \emph{signature} of $T$ to be the $n$--tuple
\[
\sig(T) = (a_1,a_2,\dots,a_n),
\]
where $a_i$ is the number of edges of $T$ in direction $i$. The signature of $T$ carries exactly the same information as the \emph{direction monomial} $q^{\dir{T}}$ appearing in Martin and Reiner's weighted count~\cite{martin-reiner-2003} of the spanning trees of $Q_n$. With respect to certain weights $q_1,\dots,q_n$ and $x_1,\dots,x_n$ they show that
\[
\sum_{T\in\tree{Q_n}} q^{\dir{T}}x^{\dd{T}}
    =q_1\cdots q_n \prod_{\stacking{S\subseteq [n]}{|S|\geq 2}}
                \sum_{i\in S} q_i (x_i^{-1}+x_i),
\]
where 
\[
q^{\dir{T}} = q_1^{a_1}q_2^{a_2}\dots q_n^{a_n}.
\]
Thus, the signature and direction monomial completely determine each other.
(The second factor $x^{\dd{T}}$ appearing here is the \emph{decoupled degree monomial} of $T$. It plays no role in this paper, so we refer the interested reader to Martin and Reiner~\cite{martin-reiner-2003} for the definition, and Tuffley~\cite[Sec.\ 2.2]{tuffley-2012} for an alternate formulation in terms of a canonical orientation of the edges of $T$.)

The goal of this paper is to study the signatures of spanning trees of $Q_n$, and to understand what $\sig(T)$ tells us about the structure of $T$. 
We begin by using Hall's Theorem to characterise the $n$--tuples that can occur as the signature of a spanning tree of $Q_n$. We then classify $T$ and $\mathcal{S}=\sig(T)$ as \emph{reducible} or \emph{irreducible} according to whether or not there is a proper nonempty subset $R$ of $[n]$ such that restricting $R$ to the indices in $S$ gives a signature of $Q_{|R|}$. We say that such a set $R$ is a \emph{reducing set} for $\mathcal{S}$, and that $T$ and $\mathcal{S}$ \emph{reduce over $R$}. Each signature $\mathcal{S}$ has an \emph{unsaturated part} $\unsat(\mathcal{S})$, and we further classify reducible signatures as \emph{strictly reducible} or \emph{quasi-irreducible} according to whether or not $\unsat(\mathcal{S})$ is reducible or irreducible. 

We show that reducibility places strict structural constraints on $T$. In particular, if $T$ reduces over $R$ then $T$ decomposes as a sum of a spanning tree $T^X$ of $Q_{[n]-R}$ for each $X\subseteq R$, together with a spanning tree $T_R$ of the multigraph $\contractb{R}$ obtained by contracting every edge of $Q_n$ in directions belonging to $\bar{R}=[n]-R$. 
The graph $\contractb{R}$ has underlying simple graph $Q_{|R|}$, and $2^{n-|R|}$ parallel edges for each edge of $Q_{|R|}$. 
Moreover, this decomposition may be realised as an isomorphism of \emph{edge slide graphs}. 

An \emph{edge slide} is an operation on spanning trees of $Q_n$, in which an edge of a spanning tree $T$ is ``slid'' across a $2$--dimensional face of $Q_n$ to get a second spanning tree $T'$. The \emph{edge slide graph} of $Q_n$ is the graph $\eslide{n}$ with vertices the spanning trees of $Q_n$, and an edge between two trees if they are related by an edge slide. 
Edge slides were introduced by the third author~\cite{tuffley-2012} as a means to combinatorially count the spanning trees of $Q_3$, and thereby answer the first nontrivial case of a
question implicitly raised by Stanley.
The number of spanning trees of $Q_n$ is known by Kirchhoff's Matrix Tree Theorem to be 
\[
|\text{Tree}(Q_n)|=2^{2^n-n-1}\prod_{k=1}^{n}k^{n \choose k}
\]
(see for example Stanley~\cite{stanley-1999}), and Stanley implicitly asked for a combinatorial proof of this fact. Tuffley's method to count the spanning trees of $Q_3$ using edge slides does not readily extend to higher dimensions, but the edge slide graph may nevertheless carry insight into the structure of the spanning trees of $Q_n$. 
Stanley's question has since been answered in full by Bernardi~\cite{bernardi-2012}.

In particular, a natural question of interest is to determine the connected components of $\eslide{n}$.  The signature is easily seen to be constant on connected components, and consequently the subgraph \eslidesig{\mathcal{S}}\ induced by the spanning trees with signature $\mathcal{S}$ is a union of connected components of \eslide{n}. We say that a signature $\mathcal{S}$ is \emph{connected} if $\eslidesig{\mathcal{S}}$ is connected, and \emph{disconnected} otherwise. We conclude the paper by using our results to show that all strictly reducible signatures are disconnected, and conjecture that $\mathcal{S}$ is connected if and only if $\mathcal{S}$ is irreducible or quasi-irreducible. 
If true, this would imply that the connected components of \eslide{n}\ can be characterised in terms of signatures of spanning trees of subcubes.
We show that it suffices to consider the irreducible case only.

\subsection{Organisation} 

The paper is organised as follows. 
Section~\ref{sec:defn} sets out the bulk of the definitions and notation needed for the paper,
with the introduction of some further definitions not needed until Section~\ref{sec:structural} postponed until then. We characterise signatures of spanning trees of $Q_n$ in Section~\ref{sec:characterisation}, and classify them in Section~\ref{sec:classification}. In Sections~\ref{sec:uprighttrees} and~\ref{sec:structural} we study the structural consequences of reducibility, considering first upright trees in Section~\ref{sec:uprighttrees} and then arbitrary reducible trees in Section~\ref{sec:structural}. We then use our results from Section~\ref{sec:structural} to prove that strictly reducible signatures are disconnected in 
Section~\ref{sec:disconnected}, and conclude with a discussion in Section~\ref{sec:discussion}. 

\section{Definitions and notation I}
\label{sec:defn}

This section sets out some definitions and notation used throughout the paper. 
Some additional definitions not needed until Section~\ref{sec:structural} are set out in a second definitions section there.

\subsection{General notation}

Given a graph $G$ we denote the vertex set of $G$ by $V(G)$ and the edge set of $G$ by $E(G)$. We write $\tree{G}$ for the set of spanning trees of $G$.

Given a set $S$, we denote the power set of $S$ by $\power{S}$. For $1\leq k\leq |S|$ we write
\[
\mathcal{P}_{\geq k}(S) = \{X\subseteq S: |X|\geq k\}. 
\]
For $n\in\naturalnumbers$ we let $[n]=\{1,2,\ldots,n\}$, 
and also write $\mathcal{P}_{\geq k}^{n}$ for $\mathcal{P}_{\geq k}([n])$.
For example, $\mathcal{P}_{\geq 2}^{3}=\{\{1,2\},\{1,3\},\{2,3\},\{1,2,3\}\}$.

\subsection{The $n$--cube}

\begin{definition}
We regard the \textbf{$n$--dimensional cube} or \textbf{$n$--cube} as the graph $Q_n$ with vertex set the power set of $[n]$, and an edge between vertices $X$ and $Y$ if and only if they differ by adding or removing exactly one element. The \textbf{direction} of the edge $e=\{X,Y\}$ is the unique element $i$ such that $X\oplus Y=\{i\}$, where $\oplus$ denotes symmetric difference.

For any $S\subseteq [n]$, we define $Q_S$ to be the induced subgraph of $Q_n$ with vertices the subsets of $S$. Observe that $Q_S$ is an $|S|$--cube.
\end{definition}

\subsection{The signature of a spanning tree of $Q_n$}

\begin{definition} 
Given a spanning tree $T$ of $Q_n$, the \textbf{signature} of $T$ is the
$n$--tuple 
\[
\sig(T)=(a_1, a_2, \dots, a_n),
\]
where for each $i$ the entry $a_i$ is the number of edges of $T$ in direction $i$. We will say that $\mathcal{S}=(a_1,\dots,a_n)$ is a \emph{signature of $Q_n$} if there is a spanning tree $T$ of $Q_n$ such that $\sig(T)=\mathcal{S}$, and we let
\[
\Sig(Q_n)= \{\sig(T): T\in\tree{Q_n}\}.
\]
\end{definition}

Figure~\ref{fig:trees} shows a pair of spanning trees of $Q_3$ with signature $(2,2,3)$. We note that the signature of $T$ carries exactly the same information as the \emph{direction monomial} $q^{\dir{T}}$ of Martin and Reiner~\cite{martin-reiner-2003}, because
\[
q^{\dir{T}} = q_1^{a_1}q_2^{a_2}\dots q_n^{a_n}
\qquad\Leftrightarrow\qquad
\sig(T) = (a_1,a_2,\dots,a_n). 
\]
The entries of $\sig(T)$ satisfy $1\leq a_i\leq 2^{n-1}$, because $Q_n$ has $2^{n-1}$ edges in direction $i$ and deleting them disconnects $Q_n$, and
\[
\sum_{i=1}^n a_i = |E(T)|=2^n-1.
\]
These conditions are not sufficient conditions for an $n$--tuple $(a_1, a_2, \dots a_n)$ to be a signature of $Q_n$. We find necessary and sufficient conditions in Section~\ref{sec:characterisation}. 

If $\mathcal{S}=(a_1,\dots,a_n)$ is a signature of $Q_n$ then so is any permutation of $\mathcal{S}$, because any permutation of $[n]$ induces an automorphism of $Q_n$. It follows that $\mathcal{S}$ is a signature if and only if the $n$--tuple $\mathcal{S}'$ obtained by permuting $\mathcal{S}$ to nondecreasing order is a signature. Accordingly we make the following definition:
\begin{definition}
A signature $(a_1, a_2, \dots, a_n)$ of $Q_n$ is \textbf{ordered} if $a_1\leq a_2\leq \dots\leq a_n$.
\end{definition} 
We will characterise signatures by characterising ordered signatures. 

\begin{figure}
\begin{center}
\setlength{\vspacing}{1.6cm}
\setlength{\hspacing}{1.8cm}
\begin{tabular}{ccc}
\begin{tikzpicture}[vertex/.style={circle,draw,inner sep=1pt,minimum size=2.5mm},thick]
\node (123) at (0,3*\vspacing) [vertex] {};
\node (12)  at (-\hspacing,2*\vspacing) [vertex]  {};
\node (13)  at (0,2*\vspacing) [vertex]  {};
\node (23)  at (\hspacing,2*\vspacing) [vertex]  {};
\node (1)  at (-\hspacing,\vspacing) [vertex]  {};
\node (2)  at (0,\vspacing) [vertex]  {};
\node (3)  at (\hspacing,\vspacing) [vertex]  {};
\node (0)  at (0,0) [vertex] {};
\foreach \x/\Y in {0/{1,2,3},1/{12,13},2/{12,23},3/{13,23},123/{12,13,23}}
   {\foreach \y in \Y
       \draw (\x) -- (\y);};
\draw [color=white,line width=4] (2)--(12);
\draw  (2)--(12);
\draw [color=white,line width=6pt] (2)--(23);
\draw  (2)--(23);
\node [right] at (0) {$\;\emptyset$};
\node [right] at (3) {$\{3\}$};
\node [right] at (23) {$\{2,3\}$};
\node [right] at (123) {$\{1,2,3\}$};
\node [left] at (12) {$\{1,2\}$};
\node [below right] at (2) {$\{2\}$};
\node [left] at (1) {$\{1\}$};
\node [right] at (13) {$\{1,3\}$};
\draw [line width=3] (12) -- (123);
\draw [line width=3] (1) -- (13);
\draw [line width=3] (0) -- (3);
\draw [line width=3] (3) -- (23);
\draw [line width=3,color=blue] (1) -- (12);
\draw [color=white,line width=5] (2)--(12);
\draw [line width=3] (12) -- (2);
\draw [line width=3] (1) -- (0);
\end{tikzpicture}& \hspace{\hspacing} &
\begin{tikzpicture}[vertex/.style={circle,draw,inner sep=1pt,minimum size=2.5mm},thick]
\node (123) at (0,3*\vspacing) [vertex] {};
\node (12)  at (-\hspacing,2*\vspacing) [vertex]  {};
\node (13)  at (0,2*\vspacing) [vertex]  {};
\node (23)  at (\hspacing,2*\vspacing) [vertex]  {};
\node (1)  at (-\hspacing,\vspacing) [vertex]  {};
\node (2)  at (0,\vspacing) [vertex]  {};
\node (3)  at (\hspacing,\vspacing) [vertex]  {};
\node (0)  at (0,0) [vertex] {};
\foreach \x/\Y in {0/{1,2,3},1/{12,13},2/{12,23},3/{13,23},123/{12,13,23}}
   {\foreach \y in \Y
       \draw (\x) -- (\y);};
\draw  (2)--(12);
\draw [color=white,line width=6pt] (2)--(23);
\draw  (2)--(23);
\node [right] at (0) {$\;\emptyset$};
\node [right] at (3) {$\{3\}$};
\node [right] at (23) {$\{2,3\}$};
\node [right] at (123) {$\{1,2,3\}$};
\node [left] at (12) {$\{1,2\}$};
\node [below right] at (2) {$\{2\}$};
\node [left] at (1) {$\{1\}$};
\node [right] at (13) {$\{1,3\}$};
\draw [line width=3] (12) -- (123);
\draw [line width=3] (1) -- (13);
\draw [line width=3] (0) -- (3);
\draw [line width=3] (3) -- (23);
\draw [line width=3,blue] (0) -- (2);
\draw [color=white,line width=5] (2)--(12);
\draw [line width=3] (12) -- (2);
\draw [line width=3] (1) -- (0);
\end{tikzpicture}
\end{tabular}
\caption{A pair of spanning trees of $Q_3$ with signature $(2,2,3)$. The two trees are related by an edge slide in direction~1 (Section~\ref{sec:edgeslides}): the tree on the right is obtained from the tree on the left by deleting the edge $\{\{1\},\{1,2\}\}$, and replacing it with the edge $\{\emptyset,\{2\}\}$.
 The tree on the right is upright (Section~\ref{sec:upright}), with associated section defined by $\psi_T([3]) = 3$, $\psi_T(\{1,2\})= 1$, $\psi_T(\{1,3\})= 3$, $\psi_T(\{2,3\})=2$, and $\psi_T(\{i\})=i$ for $1\leq i\leq 3$.}
\label{fig:trees}
\end{center}
\end{figure}
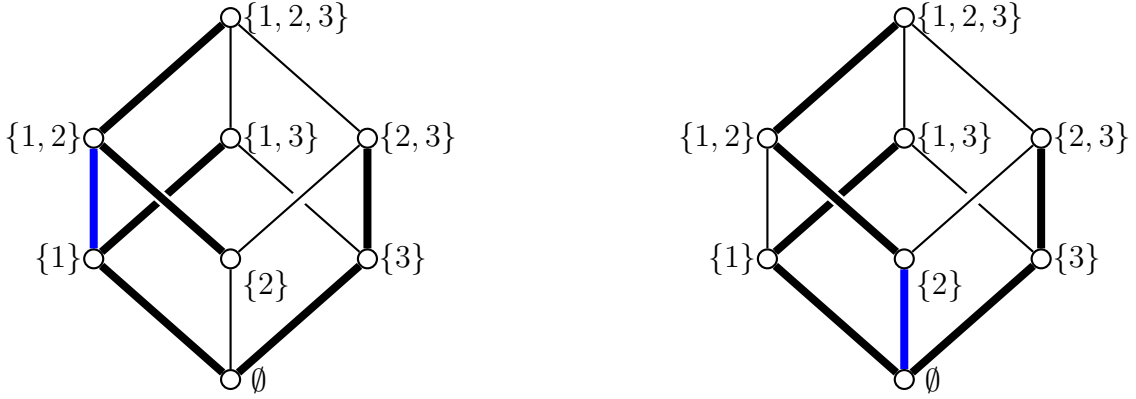

\begin{example}[Signatures in low dimensions]
\label{ex:lowdimsigs}
The 1-cube $Q_1$ has a unique spanning tree, with signature $(1)$.  The 2-cube has a total of four spanning trees: two with each of the signatures $(1,2)$ and $(2,1)$. The $3$--cube $Q_3$ has three signatures up to permutation, namely
$(1,2,4)$, $(1,3,3)$ and $(2,2,3)$. There are $16$ spanning trees with signature $(1,2,4)$; $32$ with signature $(1,3,3)$; and $64$ with signature $(2,2,3)$, for a total of $6\cdot 16+3\cdot 32+3\cdot 64=384$ spanning trees of $Q_3$. 
\end{example}

\subsection{Edge slides and the edge slide graph}
\label{sec:edgeslides}

For each $i\in[n]$ we define $\sigma_i$ to be the automorphism of $Q_n$ defined for each $X\in\power{[n]}$ by
\[
\sigma_i(X) = X\oplus\{i\},
\]
where $\oplus$ denotes symmetric difference.

\begin{definition}[Tuffley~\cite{tuffley-2012}]
Let $T$ be a spanning tree of $Q_n$, and let $e$ be an edge of $T$
in a direction $j\neq i$ such that $T$ does not also contain $\sigma_i(e)$.
We say that $e$ is \emph{$i$--slidable} or \emph{slidable in
direction $i$} if deleting $e$ from $T$ and replacing it with $\sigma_i(e)$
yields a second spanning tree $T'$; that is, if $T'=T-e+\sigma_i(e)$ is a spanning tree.
\end{definition}

\begin{example}
\label{ex:edgeslide}
Figure~\ref{fig:trees} illustrates an edge slide. 
The tree on the right is obtained from the tree on the left by 
deleting the edge $e=\{\{1\},\{1,2\}\}$, and replacing it with the edge 
$\sigma_1(e)=\{\emptyset,\{2\}\}$. This constitutes an edge slide in direction~1. We visualise it as
``sliding'' the edge $e$ in direction~1 across the $2$--dimensional face induced by
$\bigl\{\emptyset,\{1\},\{2\},\{1,2\}\bigr\}$.
\end{example}

Edge slides are a specialisation of Goddard and Swart's \emph{edge move}~\cite{goddard-1996} to the $n$--cube, in which the edges involved in the move are constrained by the structure of the cube. We visualise them as the operation of 
``sliding'' an edge across a $2$--dimensional face of the cube to get a second spanning tree, as seen in Example~\ref{ex:edgeslide} and Figure~\ref{fig:trees}.

Slidable edges may be characterised as follows:

\begin{lemma}
\label{lem:slidable}
Let $T$ be a spanning tree of $Q_n$, and let $e$ be an edge of $T$
in direction $j\neq i$. Then $e$ is $i$--slidable if and only if $\sigma_i(e)$ does not belong to $T$, and the cycle $C$ in $T+\sigma_i(e)$ created by adding $\sigma_i(e)$ to $T$ contains both $e$ and $\sigma_i(e)$, and so is broken by deleting $e$.
\end{lemma}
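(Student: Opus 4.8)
The plan is to recognise this as an instance of the standard fundamental-cycle exchange for spanning trees, specialised to the pair of edges $e$ and $\sigma_i(e)$. Throughout I would fix the setup of the lemma: $e$ is an edge of $T$ in some direction $j\neq i$, and I would write $f=\sigma_i(e)$ for brevity. Since the definition of $i$--slidability already requires $f\notin T$, the condition ``$\sigma_i(e)$ does not belong to $T$'' appears on both sides of the claimed equivalence, so I would dispose of it first and then assume $f\notin T$ for the remainder, reducing the task to showing that $T'=T-e+f$ is a spanning tree if and only if the fundamental cycle $C$ contains $e$.

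First I would observe that because $\sigma_i$ is an automorphism of $Q_n$ and $j\neq i$, the image $f=\sigma_i(e)$ is a genuine edge of $Q_n$, again in direction $j$ and distinct from $e$. With $f\notin T$ assumed, adding $f$ to the spanning tree $T$ creates a unique cycle $C$, the fundamental cycle of $f$ with respect to $T$; by construction $f\in C$, so the substantive part of the stated condition is whether $e\in C$ as well.

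The heart of the argument is then the bridge characterisation in $T+f$. The graph $T+f$ has $2^n$ vertices and $2^n$ edges and contains exactly one cycle, namely $C$; hence an edge of $T+f$ is a cut edge if and only if it does not lie on $C$. Deleting $e$ leaves a graph on $2^n$ vertices with $2^n-1$ edges, which is a spanning tree precisely when it is connected. Thus $T'=(T+f)-e$ is a spanning tree if and only if $e$ is not a cut edge of $T+f$, that is, if and only if $e\in C$. Combining this with the reduction above yields the equivalence in both directions.

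The argument is routine, so I do not anticipate a serious obstacle; the only point requiring care is keeping clear which edge plays the role of the added chord and which the deleted tree edge, and noting that $f\in C$ holds automatically, so that the genuine content of the cycle condition is the membership $e\in C$. If a more self-contained treatment were wanted in place of quoting the bridge characterisation, I would instead argue directly that deleting an edge off $C$ disconnects $T+f$ while deleting an edge of $C$ leaves a connected acyclic graph.
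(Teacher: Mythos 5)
Your argument is correct: it is the standard fundamental-cycle/exchange characterisation, carried out via the observation that $T+\sigma_i(e)$ is unicyclic so that deleting an edge leaves a spanning tree precisely when that edge lies on the unique cycle. The paper states this lemma without proof, treating it as exactly this standard fact, so your write-up matches the intended argument.
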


We define the \emph{edge slide graph} of $Q_n$ in terms of edge slides:

\begin{definition}[Tuffley~\cite{tuffley-2012}]
The \textbf{edge slide graph} of $Q_n$ is the graph $\eslide{n}$ with vertex set $\tree{Q_n}$, and an edge between trees $T_1$ and $T_2$ if and only if $T_2$ may be obtained from $T_1$ by a single edge slide. 
\end{definition}

For a connected graph $G$ the \emph{tree graph}~\cite{goddard-1996} of $G$ is the graph $T(G)$ on the spanning trees of $G$, with an edge between two trees if they're related by an edge move. 
The edge slide graph $\eslide{n}$ is therefore a subgraph of the tree graph $T(Q_n)$. 
The tree graph $T(Q_n)$ is connected, because 
$T(G)$ is easily shown to be connected for any connected graph $G$. In contrast, $\eslide{n}$ is disconnected for all $n\geq2$: 
edge slides do not change the signature, so the signature is constant on connected components. Accordingly, we make the following definition:

\begin{definition}
Let $\mathcal{S}$ be a signature of $Q_n$. The \textbf{edge slide graph} of $\mathcal{S}$ is the subgraph $\eslidesig{\mathcal{S}}$ of $\eslide{n}$ induced by the spanning trees with signature~$\mathcal{S}$. If $\mathcal{X}$ is a set of signatures, we further define
\[
\eslidesig{\mathcal{X}}=\bigcup_{\mathcal{S}\in\mathcal{X}}\eslidesig{\mathcal{S}}.
\]
\end{definition}

By our discussion above, for each signature $\mathcal{S}$ the edge slide graph $\eslidesig{\mathcal{S}}$ is a union of one or more connected components of $\eslide{n}$. We say that $\mathcal{S}$ is \textbf{connected} or \textbf{disconnected} according to whether $\eslidesig{\mathcal{S}}$ is connected or disconnected. In Section~\ref{sec:classification} we classify signatures as irreducible, quasi-irreducible or strictly reducible. We prove in Theorem~\ref{thm:disconnected} that every strictly reducible signature is disconnected, and conjecture that $\mathcal{S}$ is connected if and only if $\mathcal{S}$ is irreducible or quasi-irreducible. 
If true, this would imply that the connected components of \eslide{n}\ can be characterised in terms of signatures of spanning trees of subcubes.
By Theorem~\ref{thm:connectivity-saturated} it suffices to show that every irreducible signature is connected.

\subsection{Upright trees and sections}
\label{sec:upright}

Upright trees are a natural family of spanning trees of $Q_n$ that are easily understood. 

\begin{definition}[Tuffley~\cite{tuffley-2012}]
Root all spanning trees of $Q_n$ at $\emptyset$. A spanning tree $T$ of $Q_n$ is \textbf{upright} if for each vertex $X$ of $Q_n$ the path in $T$ from $X$ to the root has length $|X|$. 
\end{definition}

Equivalently, $T$ is upright if for every vertex $X$ of $T$, the first vertex $Y$ on the path in $T$ from $X$ to the root satisfies $Y\subseteq X$. Let $Y=X-\{i\}$, and set $\psi_T(X)=i$. Then $\psi_T$ defines a function $\powernon{n}\to[n]$ such that $\psi_T(X)\in X$ for all $X\in\powernon{n}$. We call such a function a \emph{section} of $\powernon{n}$:

\begin{definition}[Tuffley~\cite{tuffley-2012}]
A function $\psi:\powernon{n}\to[n]$ such that $\psi(X)\in X$ for all $X$ is a \textbf{section} of $\powernon{n}$. If $\psi$ is a section then the signature of $\psi$ is the $n$--tuple 
\[
\sig(\psi) = (a_1,\dots,a_n)
\]
such that $a_i=|\{X:\psi(X)=i\}|$ for all $i$. 
\end{definition}

It is clear that upright trees are equivalent to sections:
\begin{theorem}[Tuffley~{\cite[Lemma~11]{tuffley-2012}} for $n=3$, and Al Fran~{\cite[Lemma~2.2.27]{alfran-2017}} for arbitrary $n$]
The correspondence $T\leftrightarrow\psi_T$ is a bijection between the set of upright spanning trees of $Q_n$ and the set of sections of $\powernon{n}$. Moreover
$\sig(T) = \sig(\psi_T)$ for all $T$.
\end{theorem}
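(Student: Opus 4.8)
The plan is to construct an explicit inverse map and verify it is a two-sided inverse. Given a section $\psi\colon\powernon{n}\to[n]$, I would define a subgraph $T_\psi$ of $Q_n$ by taking, for each nonempty $X\subseteq[n]$, the edge joining $X$ to its \emph{parent} $X-\{\psi(X)\}$. Since $\psi(X)\in X$, this parent is a proper subset of $X$, so each of the $2^n-1$ nonempty vertices contributes exactly one edge; moreover the larger endpoint (by cardinality) of each such edge is $X$ itself, so distinct vertices contribute distinct edges, giving exactly $2^n-1=|V(Q_n)|-1$ edges in total.

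Next I would show $T_\psi$ is a spanning tree. Following parent edges from any vertex $X$ produces a strictly decreasing chain $X\supsetneq X-\{\psi(X)\}\supsetneq\cdots$ of subsets that must terminate at $\emptyset$, so every vertex is joined to $\emptyset$ in $T_\psi$ and the graph is connected. A connected graph on $m$ vertices with $m-1$ edges is a tree, so $T_\psi\in\tree{Q_n}$. The chain from $X$ to $\emptyset$ removes one element at each step, so it has length $|X|$; being a path in a tree it is \emph{the} path from $X$ to the root, whence $T_\psi$ is upright.

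For the mutual-inverse checks, one direction is immediate: in $T_\psi$ the first vertex on the path from $X$ to the root is by construction $X-\{\psi(X)\}$, so $\psi_{T_\psi}=\psi$. For the other direction, given an upright tree $T$ I would first confirm that $\psi_T$ is well-defined, i.e.\ that the parent $Y$ of a nonempty $X$ really is $X$ with a single element removed: $Y$ is adjacent to $X$ so $|Y|=|X|\pm1$, while deleting the first edge of the length-$|X|$ path from $X$ leaves a path of length $|X|-1$ from $Y$ to the root, which by uprightness of $Y$ equals $|Y|$; hence $|Y|=|X|-1$ and $Y=X-\{i\}$ with $i=\psi_T(X)$. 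The parent edges of $T$ are then exactly the edges of $T_{\psi_T}$, and since these $2^n-1$ distinct edges all lie in the tree $T$, which itself has only $2^n-1$ edges, we conclude $T_{\psi_T}=T$.

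Finally the signature identity is a bookkeeping step: the parent edge $\{X,X-\{\psi_T(X)\}\}$ has direction $\psi_T(X)$, so the number of edges of $T$ in direction $i$ equals $|\{X:\psi_T(X)=i\}|$, which is precisely the $i$th entry of $\sig(\psi_T)$; hence $\sig(T)=\sig(\psi_T)$. I do not anticipate a genuine obstacle here, the argument being essentially a packaging of the length condition defining uprightness. The only two points needing care are the well-definedness of $\psi_T$ (the claim that the parent removes a single element) and the observation that the $2^n-1$ parent edges of an upright tree already exhaust its edge set, both of which follow cleanly from that length condition.
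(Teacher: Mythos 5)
Your proof is correct. The paper itself gives no proof of this statement --- it is quoted from Tuffley and Al Fran, with the correspondence $T\leftrightarrow\psi_T$ merely set up in the surrounding text --- and your argument (explicit inverse via parent edges, connectivity by following the strictly decreasing chain to $\emptyset$, the edge count $2^n-1$, and the observation that the parent edges of an upright tree exhaust its edge set) is exactly the natural argument implicit in that setup, with the two delicate points (well-definedness of $\psi_T$ and the exhaustion of the edge set) handled correctly.
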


\section{Characterisation of signatures of spanning trees of $Q_n$}
\label{sec:characterisation}

In this section we use Hall's Theorem to prove the following characterisation of the $n$--tuples 
 $\mathcal{S}=(a_1, a_2, \dots, a_n)$ that are the signature of a spanning tree of $Q_n$. 

\begin{theorem}
\label{thm:characterisation}
Let $\mathcal{S}=(a_1,a_2,\dots,a_n)$, where $1\leq a_1\leq a_2 \leq \dots \leq a_n\leq 2^{n-1}$ and $\sum_{i=1}^n a_i=2^n-1$. Then $\mathcal{S}$ is the signature of a spanning tree of $Q_n$ if and only if $\sum_{j=1}^k a_j\geq2^k-1$, for all $k \leq n$. 
\end{theorem}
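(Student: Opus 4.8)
The plan is to prove the two directions separately, using a contraction argument for necessity and Hall's Theorem applied to sections for sufficiency.

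For necessity, suppose $T$ is a spanning tree with $\sig(T)=\mathcal{S}$, and fix $k\leq n$. I would contract every edge of $T$ in a direction $i>k$. Since contraction preserves connectivity, the result is a connected multigraph on the $2^k$ classes obtained by identifying vertices $X,Y$ whenever $X\cap[k]=Y\cap[k]$. An edge in direction $i\le k$ joins the class $X\cap[k]$ to $(X\cap[k])\cup\{i\}$ and so survives as a genuine (non-loop) edge, while an edge in a direction $i>k$ becomes a loop and vanishes. A connected multigraph on $2^k$ vertices has at least $2^k-1$ edges, and the surviving edges are exactly the $a_1+\dots+a_k$ edges of $T$ in directions $1,\dots,k$; hence $\sum_{j=1}^k a_j\geq 2^k-1$.

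For sufficiency it suffices to realise $\mathcal{S}$ as the signature of an upright tree, equivalently, by the bijection between upright trees and sections, as the signature of a section $\psi\colon\powernon{n}\to[n]$. I would model the construction of such a $\psi$ as a bipartite assignment problem: take the nonempty subsets $X\subseteq[n]$ as one side and the elements $1,\dots,n$ as the other, join $X$ to $i$ whenever $i\in X$, and ask to assign each $X$ to some $i\in X$ so that exactly $a_i$ subsets are assigned to $i$. Splitting each element $i$ into $a_i$ copies turns this into the problem of finding a matching saturating the subset side; because $\sum_{i} a_i=2^n-1$ equals the number of nonempty subsets, such a matching automatically meets every capacity $a_i$ exactly, and so yields a section with signature $\mathcal{S}$.

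By Hall's Theorem a left-saturating matching exists precisely when every family $\mathcal{A}$ of nonempty subsets satisfies $\sum_{i\in\bigcup_{X\in\mathcal{A}}X}a_i\geq|\mathcal{A}|$. The key observation is that for a fixed union $U=\bigcup_{X\in\mathcal{A}}X$ the largest admissible $\mathcal{A}$ consists of all $2^{|U|}-1$ nonempty subsets of $U$, so Hall's condition collapses to $\sum_{i\in U}a_i\geq 2^{|U|}-1$ for every $U\subseteq[n]$. Finally, since $a_1\le\dots\le a_n$, the minimum of $\sum_{i\in U}a_i$ over sets $U$ of size $k$ is attained at $U=\{1,\dots,k\}$ and equals $\sum_{j=1}^k a_j$, so these set-indexed inequalities are equivalent to the $n$ prefix inequalities $\sum_{j=1}^k a_j\geq2^k-1$ in the hypothesis. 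I expect the main work to lie in this last reduction, namely correctly identifying that the worst case of Hall's condition is a full power set and then using the ordering to pass from arbitrary subsets $U$ to initial segments, whereas the matching set-up and the necessity argument are routine once the contraction viewpoint is in place.
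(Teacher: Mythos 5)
Your proof is correct, and the sufficiency half is essentially the paper's own argument: the same bipartite graph with each element $i$ split into $a_i$ copies, the same observation that a left-saturating matching is automatically perfect and yields a section (hence an upright tree) with signature $\mathcal{S}$, and the same two-step reduction of Hall's condition --- first to the inequalities $\sum_{i\in U}a_i\geq 2^{|U|}-1$ by noting the extremal family is the full set $\mathcal{P}_{\geq 1}(U)$ of nonempty subsets of the support, then to initial segments using the ordering. Where you genuinely diverge is the necessity direction. The paper obtains necessity from the same Hall equivalence, but this requires the ``only if'' half of Lemma~\ref{lem:signatures-of-sections} (every tree signature is a section signature), which in turn rests on either the Martin--Reiner product formula or the fact that every spanning tree is connected to an upright tree by edge slides. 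Your quotient-counting argument bypasses that machinery entirely: the image of $T$ in the multigraph obtained from $Q_n$ by identifying $X$ with $Y$ whenever $X\cap[k]=Y\cap[k]$ is a connected spanning subgraph of a graph on $2^k$ vertices whose non-loop edges are precisely the edges of $T$ in directions $1,\dots,k$, so $\sum_{j=1}^k a_j\geq 2^k-1$. This is more elementary and self-contained, and it is the same counting that underlies the paper's later Theorem~\ref{thm:reducible-subtrees}. One small correction to your phrasing: you should contract all edges of $Q_n$ in directions $i>k$ (equivalently, quotient by the stated equivalence relation) and then take the image of $T$, rather than contracting only the edges of $T$ in those directions --- the latter operation produces a tree on $2^n-\sum_{j>k}a_j$ vertices, whose edge count merely reproduces the identity $\sum_i a_i=2^n-1$ and gives no inequality. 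Since you explicitly name the $2^k$ classes as the vertex set, your intent is clear and the fix is purely one of wording.
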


\begin{remark}
\label{rem:characterisation}
Since $\sum_{i=1}^n a_i=2^n-1$, the signature condition of Theorem~\ref{thm:characterisation} is equivalent to
\[
\sum_{j=k+1}^n a_j \leq 2^n-2^k = 2^k(2^{n-k}-1)
\]
for all $1\leq k\leq n$. 
\end{remark}

\begin{example}[Signatures of $Q_4$]
\label{ex:q4signatures}
Applying Theorem~\ref{thm:characterisation} with $n=4$ we find that there are 18 ordered signatures of $Q_4$:
\begin{center}
\begin{tabular}{cccccc} 
(1, 2, 4, 8)&(1, 2, 5, 7)&(1, 3, 5, 6)&(2, 2, 4, 7)&(2, 3, 4, 6)&(3, 3, 3, 6)\\
(1, 3, 3, 8)&(1, 2, 6, 6)&(1, 4, 4, 6)&(2, 2, 5, 6)&(2, 3, 5, 5)&(3, 3, 4, 5)\\
(2, 2, 3, 8)&(1, 3, 4, 7)&(1, 4, 5, 5)&(2, 3, 3, 7)&(2, 4, 4, 5)&(3, 4, 4, 4)
\end{tabular}
\end{center}
We will discuss the classification of these signatures in Example~\ref{ex:q4signatures-continued}, and the reason for organising them in this way will become apparent then.
\end{example}

The first step in the proof of Theorem~\ref{thm:characterisation} is to reduce it to the problem of characterising signatures of sections of $\powernon{n}$:
\begin{lemma}
\label{lem:signatures-of-sections}
The $n$--tuple $\mathcal{S}=(a_1,a_2,\dots,a_n)$ is the signature of a spanning tree of $Q_n$ if and only if it is the signature of a section of $\powernon{n}$.
\end{lemma}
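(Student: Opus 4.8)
The plan is to prove the two directions separately, exploiting the fact that upright trees give a ready supply of spanning trees whose signatures can be read off combinatorially from a section.

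For the ``if'' direction, suppose $\mathcal{S}=\sig(\psi)$ for some section $\psi$ of $\powernon{n}$. By the theorem of Tuffley and Al Fran cited above, the correspondence $T\leftrightarrow\psi_T$ is a bijection between upright spanning trees of $Q_n$ and sections of $\powernon{n}$, and it preserves signatures: $\sig(T)=\sig(\psi_T)$. Hence the upright tree $T$ corresponding to $\psi$ is a genuine spanning tree of $Q_n$ with $\sig(T)=\sig(\psi)=\mathcal{S}$, so $\mathcal{S}\in\Sig(Q_n)$. This direction is essentially immediate from the cited bijection.

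For the ``only if'' direction, suppose $\mathcal{S}=\sig(T)$ for some spanning tree $T$ of $Q_n$, and we must produce a section $\psi$ with $\sig(\psi)=\mathcal{S}$. The natural move is to root $T$ at $\emptyset$ and, for each nonempty vertex $X$, let $Y$ be the neighbour of $X$ on the path in $T$ toward the root and set $\psi(X)$ to be the direction of the edge $\{X,Y\}$. Each vertex $X\neq\emptyset$ contributes exactly one such parent edge, and these parent edges are precisely the edges of $T$ (since $T$ has $2^n-1$ edges, one for each nonempty vertex), so the multiset of directions counted by $\sig(\psi)$ agrees with that counted by $\sig(T)$; thus $\sig(\psi)=\sig(T)=\mathcal{S}$. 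The only thing to verify is that $\psi$ is a genuine section, i.e.\ that $\psi(X)\in X$. This is exactly the condition that the parent $Y=X\oplus\{\psi(X)\}$ satisfies $Y\subsetneq X$, which holds precisely when $T$ is upright.

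The main obstacle is therefore that an arbitrary spanning tree $T$ need \emph{not} be upright: the parent of $X$ may differ from $X$ by \emph{adding} an element rather than removing one, so the direction $\psi(X)$ need not lie in $X$. To repair this I would not insist on using $T$ itself but rather argue that every signature achievable by some spanning tree is achievable by an upright one. The cleanest route is to replace the naive ``parent direction'' assignment by one that always points toward the root along a decreasing step. Concretely, I would use the standard fact (implicit in the upright-tree framework) that one may process the vertices $X$ in order of increasing $|X|$ and greedily assign $\psi(X)$ to be the direction of \emph{some} edge of $T$ incident to $X$ going to a vertex $X-\{i\}$, after first transforming $T$ by edge slides into an upright tree of the same signature. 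Since edge slides preserve the signature, it suffices to show any $T$ can be slid to an upright tree; alternatively, and more self-contained, one establishes directly that for any spanning tree $T$ there is a system of distinct representatives assigning to each nonempty $X$ an element of $X$ so that the resulting direction counts match $\sig(T)$, which is precisely where Hall's Theorem (invoked in the proof of Theorem~\ref{thm:characterisation}) does the real work. I expect the deduction that such an assignment exists and has the correct signature to be the crux, with the upright $\Leftrightarrow$ section bijection supplying the converse for free.
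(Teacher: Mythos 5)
Your main argument---slide an arbitrary spanning tree to an upright tree of the same signature, then invoke the signature-preserving bijection between upright trees and sections---is exactly the paper's Proof~2 of Lemma~\ref{lem:signatures-of-sections}, which likewise cites Tuffley's result that every spanning tree of $Q_n$ is connected to an upright tree by edge slides. One caution: your suggested ``more self-contained'' alternative via Hall's Theorem would be circular as stated, because the paper applies Hall's Theorem only \emph{after} using this lemma to reduce Theorem~\ref{thm:characterisation} to sections; to make that route work you would first need an independent proof that the signature of an arbitrary spanning tree satisfies the partial-sum condition, e.g.\ by bounding the number of edges of $T$ in directions outside a $k$-set $K$ by $2^k(2^{n-k}-1)$ via the forests $T(K,X)$.
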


We give two independent proofs of this fact: one using Martin and Reiner's weighted count~\cite{martin-reiner-2003} of spanning trees of $Q_n$, and the second via edge slides and upright trees.

\begin{proof}[Proof 1 of Lemma~\ref{lem:signatures-of-sections}, via Martin and Reiner's weighted count]
By Martin and Reiner~\cite{martin-reiner-2003} we have
\[
\sum_{T\in\tree{Q_n}} q^{\dir{T}}x^{\dd{T}}
    =q_1\cdots q_n \prod_{S\in\ptwo{n}}
                \sum_{i\in S} q_i (x_i^{-1}+x_i),
\]
in which
\[
q^{\dir{T}} = q_1^{a_1}q_2^{a_2}\dots q_n^{a_n}
\qquad\Leftrightarrow\qquad
\sig(T) = (a_1,a_2,\dots,a_n). 
\]
Set $x_i=1$ for all $i$ to get
\begin{align*}
\sum_{T\in\tree{Q_n}} q^{\dir{T}}
    &=q_1\cdots q_n \prod_{S\in\ptwo{n}}\sum_{i\in S} 2q_i \\
    &= 2^{2^n-n-1}\prod_{S\in\powernon{n}}\sum_{i\in S} q_i.
\end{align*}
Each term in the expansion corresponds to a choice of $i\in S$ for each nonempty subset $S$ of $[n]$, and hence to a section of $\powernon{n}$. 
\end{proof}

\begin{proof}[Proof 2 of Lemma~\ref{lem:signatures-of-sections}, via edge slides and upright trees]
By Tuffley~\cite[Cor.~15]{tuffley-2012}, each spanning tree of $Q_n$ is connected to an upright spanning tree by a sequence of edge slides. The signature is invariant under edge slides, so we conclude that $\mathcal{S}$ is the signature of a spanning tree if and only if it is the signature of an upright tree. But upright spanning trees are equivalent to sections of $\powernon{n}$, and the equivalence is signature-preserving.
\end{proof}

Recall that Hall's Theorem may be stated as follows (see for example~\cite[Thm~11.13]{bona-2017}):
\begin{theorem}[Hall~\cite{hall-1935}]
\label{hall}
Let $G=(A,B)$ be a bipartite graph with $|A|=|B|$. Then $G$ has a perfect matching if and only if for all nonempty $Y\subseteq A$ we have $|Y|\leq |N(Y)|$, where $N(Y)\subseteq B$ is the neighbourhood of $Y$ in $G$.
\end{theorem}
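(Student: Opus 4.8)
The plan is to prove Hall's Theorem by induction on $n=|A|=|B|$, handling the two directions separately. (One could instead argue via augmenting paths or deduce it from K\"onig's theorem, but the induction is self-contained and short.) The forward implication is immediate: if $G$ has a perfect matching $M$, then $M$ assigns to the vertices of any nonempty $Y\subseteq A$ distinct partners in $B$, all lying in $N(Y)$, whence $|N(Y)|\geq|Y|$. The substance is the converse, that the condition $|Y|\leq|N(Y)|$ for every nonempty $Y\subseteq A$ forces a perfect matching, and this is what I would prove by induction. The base case $n=1$ is settled by applying the condition to the single vertex of $A$ to produce an edge, which is the required matching. For the inductive step I would split according to whether Hall's condition holds with strict slack everywhere or is tight on some proper subset.

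In the \emph{surplus case}, suppose $|N(Y)|\geq|Y|+1$ for every nonempty proper subset $Y\subsetneq A$. Pick any $a\in A$, choose a neighbour $b\in N(\{a\})$, match $a$ to $b$, and delete both vertices. In the reduced balanced graph $G'$ on $A\setminus\{a\}$ and $B\setminus\{b\}$, removing $b$ shrinks each neighbourhood by at most one, so the surplus hypothesis gives $|N_{G'}(Y)|\geq|N_G(Y)|-1\geq|Y|$ for every nonempty $Y\subseteq A\setminus\{a\}$. Thus $G'$ satisfies Hall's condition and, being balanced of order $n-1$, has a perfect matching by induction; together with the edge $ab$ this matches all of $G$.

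In the \emph{tight case}, suppose some nonempty proper $Y_0\subsetneq A$ satisfies $|N(Y_0)|=|Y_0|$. I would partition $G$ into the induced subgraph $G_1$ on $Y_0\cup N(Y_0)$ and the induced subgraph $G_2$ on $(A\setminus Y_0)\cup(B\setminus N(Y_0))$, match each by induction, and take the union. The graph $G_1$ inherits Hall's condition directly, since $N_{G_1}(Y)=N_G(Y)$ for every $Y\subseteq Y_0$; as $G_1$ is balanced with $|Y_0|=|N(Y_0)|<n$, induction yields a perfect matching $M_1$. For $G_2$, given $Z\subseteq A\setminus Y_0$ I would apply Hall's condition in $G$ to $Y_0\cup Z$ and use the tightness $|N(Y_0)|=|Y_0|$ to cancel the contribution of $Y_0$, obtaining $|N_{G_2}(Z)|\geq|Z|$. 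Since $G_2$ is balanced of smaller order, induction gives a perfect matching $M_2$, and $M_1\cup M_2$ matches all of $G$.

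The main obstacle is the tight case, and within it the verification that $G_2$ satisfies Hall's condition: the cancellation is valid only because $N_G(Y_0\cup Z)$ decomposes as the disjoint union $N(Y_0)\sqcup N_{G_2}(Z)$, which must be checked carefully against the definitions of the induced neighbourhoods. The case split itself is the conceptual heart of the argument, since a tight proper subset is precisely the obstruction preventing the greedy single-edge deletion of the surplus case from preserving Hall's condition.
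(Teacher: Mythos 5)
Your proof is correct, but there is nothing in the paper to compare it against: the paper states Hall's Theorem as a classical black box, citing Hall's original paper and B\'ona's textbook, and never proves it. What you have written is the standard Halmos--Vaughan induction, and both cases check out. In the surplus case, note that $A\setminus\{a\}$ is itself a proper subset of $A$, so the strict hypothesis $|N(Y)|\geq |Y|+1$ really does cover \emph{every} nonempty $Y\subseteq A\setminus\{a\}$, including the full left side of the reduced graph; and the edge $ab$ exists because Hall's condition applied to $\{a\}$ gives $|N(\{a\})|\geq 1$. In the tight case, the decomposition you flag as the delicate point is valid: since $N_G(Y_0\cup Z)=N(Y_0)\cup N_G(Z)$ and $N_{G_2}(Z)=N_G(Z)\setminus N(Y_0)$, one gets $N_{G_2}(Z)=N_G(Y_0\cup Z)\setminus N(Y_0)$, so $|N_{G_2}(Z)|\geq |Y_0\cup Z|-|Y_0|=|Z|$, and both $G_1$ and $G_2$ are balanced of order strictly less than $n$ because $Y_0$ is nonempty and proper. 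One observation worth making: your surplus case is precisely the mechanism the paper invokes, without proof, in the remark immediately following the theorem --- that if $|Y|<|N(Y)|$ for all proper nonempty $Y$ then a prescribed vertex $a\in A$ can be matched to a prescribed $b\in N(\{a\})$ and the rest completed to a perfect matching. That refinement is what drives Theorem~\ref{thm:excess-ell} and Theorem~\ref{thm:specifytwo}, so your argument not only proves the cited result but also makes explicit the strengthening the paper actually uses.
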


If the stronger condition $|Y|< |N(Y)|$ holds for all proper nonempty $Y$, then for any $a\in A$ and $b\in N(A)$ one may show there exists a perfect matching such that $a$ is matched with $b$. We use this idea to prove our results of Section~\ref{sec:upright-irreducible}. 

We now prove Theorem~\ref{thm:characterisation}. The proof is illustrated in Figure~\ref{fig:matchinggraph}.

\begin{figure}[t]
\begin{center}
\setlength{\vspacing}{-2.25cm}
\setlength{\hspacing}{1.5cm}
\renewcommand{\arraystretch}{4}
\begin{tabular}{c}
\begin{tikzpicture}[vertex/.style={circle,draw,minimum size = 2mm,inner sep=0pt},thick]
\node (1)  at (0,0) [vertex]  {};
\node (2)  at (\hspacing,0) [vertex]  {};
\node (3)  at (2*\hspacing,0) [vertex]  {};
\node (12)  at (3*\hspacing,0) [vertex]  {};
\node (13)  at (4*\hspacing,0) [vertex]  {};
\node (23)  at (5*\hspacing,0) [vertex]  {};
\node (123) at (6*\hspacing,0) [vertex] {};
\node [above] at (1) {$\{1\}$};
\node [above] at (2) {${\{2\}}$};
\node [above] at (3) {${\{3\}}$};
\node [above] at (12) {$\{1,2\}$};
\node [above] at (13) {$\{1,3\}$};
\node [above] at (23) {${\{2,3\}}$};
\node [above] at (123) {$\{1,2,3\}$};
\node (a11)  at (0,\vspacing) [vertex]  {};
\node (a12)  at (\hspacing,\vspacing) [vertex]  {};
\node (a21)  at (2*\hspacing,\vspacing) [vertex]  {};
\node (a22)  at (3*\hspacing,\vspacing) [vertex]  {};
\node (a31)  at (4*\hspacing,\vspacing) [vertex]  {};
\node (a32)  at (5*\hspacing,\vspacing) [vertex]  {};
\node (a33) at (6*\hspacing,\vspacing) [vertex] {};
\node [below] at (a11) {$1$};
\node [below] at (a12) {$1$};
\node [below] at (a21) {${2}$};
\node [below] at (a22) {${2}$};
\node [below] at (a31) {${3}$};
\node [below] at (a32) {${3}$};
\node [below] at (a33) {${3}$};
\node at (-0.6*\hspacing,0) {$A$:};
\node at (-0.6*\hspacing,\vspacing) {$B$:};
\foreach \v/\A in {1/{a11,a12},2/{a21,a22},3/{a31,a32,a33},12/{a11,a12,a21,a22},13/{a11,a12,a31,a32,a33},23/{a21,a22,a31,a32,a33},123/{a11,a12,a21,a22,a31,a32,a33}}
    \foreach \a in \A
        \draw (\v) -- (\a);
\end{tikzpicture} \\ 
\begin{tikzpicture}[vertex/.style={circle,draw,minimum size = 2mm,inner sep=0pt},thick]
\node (1)  at (0,0) [vertex]  {};
\node (2)  at (\hspacing,0) [vertex,fill=blue]  {};
\node (3)  at (2*\hspacing,0) [vertex]  {};
\node (12)  at (3*\hspacing,0) [vertex]  {};
\node (13)  at (4*\hspacing,0) [vertex]  {};
\node (23)  at (5*\hspacing,0) [vertex,fill=blue]  {};
\node (123) at (6*\hspacing,0) [vertex] {};
\node [above] at (1) {$\{1\}$};
\node [above] at (2) {${\{2\}}$};
\node [above] at (3) {${\{3\}}$};
\node [above] at (12) {$\{1,2\}$};
\node [above] at (13) {$\{1,3\}$};
\node [above] at (23) {${\{2,3\}}$};
\node [above] at (123) {$\{1,2,3\}$};
\node (a11)  at (0,\vspacing) [vertex]  {};
\node (a12)  at (\hspacing,\vspacing) [vertex]  {};
\node (a21)  at (2*\hspacing,\vspacing) [vertex,fill=red]  {};
\node (a22)  at (3*\hspacing,\vspacing) [vertex,fill=red]  {};
\node (a31)  at (4*\hspacing,\vspacing) [vertex,fill=red]  {};
\node (a32)  at (5*\hspacing,\vspacing) [vertex,fill=red]  {};
\node (a33)  at (6*\hspacing,\vspacing) [vertex,fill=red] {};
\node [below] at (a11) {$1$};
\node [below] at (a12) {$1$};
\node [below] at (a21) {${2}$};
\node [below] at (a22) {${2}$};
\node [below] at (a31) {${3}$};
\node [below] at (a32) {${3}$};
\node [below] at (a33) {${3}$};
\node at (-0.6*\hspacing,0) {$A$:};
\node at (-0.6*\hspacing,\vspacing) {$B$:};
\foreach \v/\A in {1/{a11,a12},2/{a21,a22},3/{a31,a32,a33},12/{a11,a12,a21,a22},13/{a11,a12,a31,a32,a33},23/{a21,a22,a31,a32,a33},123/{a11,a12,a21,a22,a31,a32,a33}}
    \foreach \a in \A
        \draw (\v) -- (\a);
\foreach \v/\A in {2/{a21,a22},23/{a21,a22,a31,a32,a33}}
    \foreach \a in \A
        \draw [ultra thick] (\v) -- (\a);
\end{tikzpicture}
\end{tabular}
\caption{Illustrating the proof of Theorem~\ref{thm:characterisation}, in the case $\mathcal{S}=(2,2,3)$. 
\emph{Upper figure:} The matching graph $G_\mathcal{S}$.
\emph{Lower figure:}
Checking the Hall condition $|N(Y)|\geq|Y|$ for $Y=\bigl\{\{2\},\{2,3\}\bigr\}\subseteq A$ (filled vertices in $A$).
We have $\supp(Y)=\{2\}\cup\{2,3\}=\{2,3\}$, so the neighbourhood of $Y$ consists of all vertices in $B$ labelled $2$ or $3$ (filled vertices in $B$). Consequently
$|N(Y)|=\sum_{i\in\supp(Y)} a_i = a_2+a_3$.
Since $\mathcal{S}$ is ordered $|N(Y)|=a_2+a_3\geq a_1+a_2 = 4$.
On the other hand, $Y$ is a nonempty subset of $\mathcal{P}_{\geq 1}(\supp(Y))$, so
$|Y|\leq |\mathcal{P}_{\geq 1}(\supp(Y))|=2^2-1=3$. The Hall condition for $Y=\bigl\{\{2\},\{2,3\}\bigr\}$ therefore follows from the condition $\sum_{i=1}^k a_i \geq 2^k-1$ of Theorem~\ref{thm:characterisation}, with $k=|\supp(Y)|=2$.}
\label{fig:matchinggraph}
\end{center}
\end{figure}
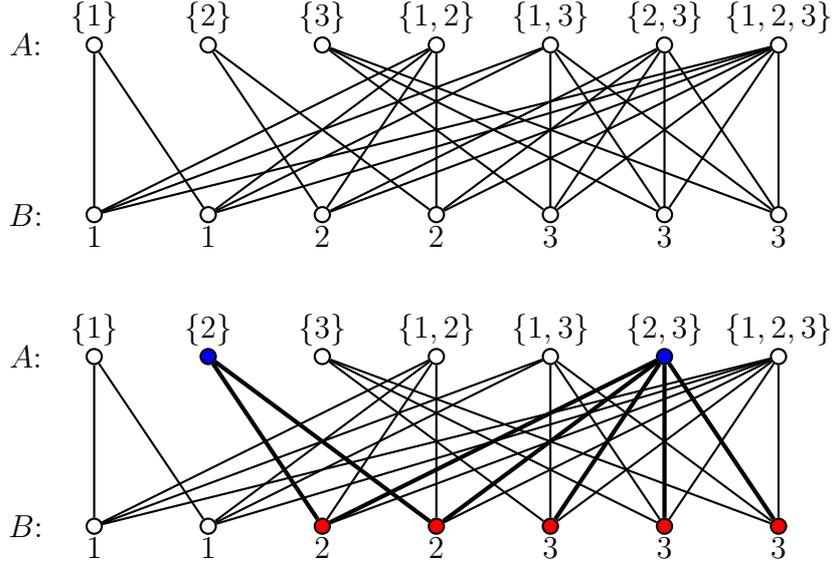

\begin{proof}[Proof of Theorem~\ref{thm:characterisation}]
Let $A$ be $\powernon{n}$, the set of $2^n-1$ nonempty vertices of $Q_n$, and let $B$ be a set of $2^n-1$ vertices of which $a_i$ are labelled $i$, for each $i\in [n]$. For each vertex $V$ in $A$ and $i\in V$ we draw an edge to every vertex in $B$ labelled $i$, as shown in Figure~\ref{fig:matchinggraph} for the case $\mathcal{S}=(2,2,3)$. Let $G_{\mathcal{S}}$ be the resulting bipartite graph with bipartition $(A, B)$.  A section of $\powernon{n}$ with signature $\mathcal{S}$ corresponds to a perfect matching in $G_{\mathcal{S}}$, so we show there is a perfect matching in $G_{\mathcal{S}}$ if and only if the signature condition $\sum_{j=1}^k a_j\geq2^k-1$ is satisfied for all $k \leq n$. 

Given a nonempty subset $Y$ of $A$, define the \emph{support} of $Y$ to be the set
\[
\supp(Y)=\bigcup_{V\in Y} V.
\]
Suppose that $\supp(Y)=\{i_1, i_2\dots, i_k\}$, 
where $1\leq i_1<\dots < i_k\leq n$. Then $i_j\geq j$ for $1\leq j\leq k$, which implies $a_{i_j}\geq a_j$ because $\mathcal{S}$ is ordered.
It follows that the neighbourhood $N(Y)$ of $Y$ in $G_\mathcal{S}$ satisfies
\begin{align*}
|N(Y)|= \sum_{i\in \supp(Y)} a_i
             &=\sum_{j=1}^k a_{i_j}\\
             &\geq \sum_{j=1}^k a_j 
\end{align*}
with equality if $\supp(Y)=\{1, \dots, k\}$. Also 
\[
|Y|\leq |\mathcal{P}_{\geq 1}(\supp(Y))|= 2^k-1, 
\]
with equality if and only if $Y=\mathcal{P}_{\geq 1}(\supp(Y))$. Then we conclude that $|N(Y)|\geq |Y|$ for all $Y\subseteq A$ if and only if $\sum_{j=1}^k a_j\geq 2^k-1$ for all $k\leq n$. Thus by Hall's Theorem there exists a perfect matching in $G$ if and only if $\sum_{j=1}^k a_j\geq 2^k-1$ for all $k$.  
\end{proof}

We conclude this section by proving a lower bound on the growth of an ordered signature.
\begin{lemma}
\label{lem:signature-growth}
Let $\mathcal{S}=(a_1, \dots, a_n)$ be an ordered signature of $Q_n$. Then $i\leq a_i$ for all  $i\in [n]$. 
\end{lemma}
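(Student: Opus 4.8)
The plan is to use the characterisation from Theorem~\ref{thm:characterisation} together with the assumption that $\mathcal{S}$ is ordered, and proceed by contradiction. Suppose for some $i\in[n]$ we have $a_i<i$, i.e.\ $a_i\leq i-1$. Since $\mathcal{S}$ is ordered, $a_1\leq a_2\leq\dots\leq a_i$, and each of these is a positive integer (recall $a_j\geq 1$). The key observation is that if $a_i\leq i-1$, then the first $i$ entries $a_1,\dots,a_i$ are $i$ positive integers each at most $i-1$, so their sum is constrained from above in a way that will conflict with the signature condition $\sum_{j=1}^i a_j\geq 2^i-1$.

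Concretely, I would argue as follows. By the ordering, $a_j\leq a_i\leq i-1$ for all $j\leq i$, so
\[
\sum_{j=1}^i a_j \leq i(i-1).
\]
On the other hand, Theorem~\ref{thm:characterisation} gives $\sum_{j=1}^i a_j\geq 2^i-1$. Combining these forces $2^i-1\leq i(i-1)$. The main step is then to check that this inequality fails for all $i\geq 1$: one verifies it directly for small $i$ (for $i=1$ it reads $1\leq 0$, already false) and confirms by an easy induction or by comparing growth rates that $2^i-1>i(i-1)$ for every $i\geq 1$, since $2^i$ grows exponentially while $i(i-1)$ is only quadratic. This contradiction establishes $a_i\geq i$.

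The only genuine content is the numerical inequality $2^i-1>i(i-1)$ for all $i\geq 1$, and this is routine: the base cases $i=1,2,3$ are immediate ($1>0$, $3>2$, $7>6$), and for the inductive step one uses that doubling $2^i$ outpaces the increment in $i(i-1)$. I expect no real obstacle here; the crux is simply recognising that the ordering lets one bound $\sum_{j=1}^i a_j$ above by $i\cdot a_i\leq i(i-1)$, after which the exponential-versus-polynomial comparison does all the work. An alternative, perhaps cleaner, route avoids contradiction entirely: one could prove $a_i\geq i$ directly by noting that $a_1\geq 1$, and inductively that $a_{i}$ cannot be too small without violating the partial-sum bound, but the contradiction argument above is the most transparent and I would present that.
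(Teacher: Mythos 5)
Your proposal is correct and is essentially the paper's own argument: both bound $\sum_{j=1}^i a_j$ above by $i(i-1)$ using the ordering and the assumption $a_i\leq i-1$, and derive a contradiction with the signature condition $\sum_{j=1}^i a_j\geq 2^i-1$ via the elementary inequality $i(i-1)<2^i-1$. No issues.
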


\begin{proof}
We use the fact easily proved by induction that $m(m-1)< 2^m-1$ for all $m$. Let $j< i$. Since $\mathcal{S}$ is ordered we have $a_j\leq a_i$, and therefore $2^i-1\leq \sum_{j=1}^i a_j\leq i a_i$. 

Suppose that $i> a_i$. Then $a_i\leq i-1$ and so $2^i-1\leq i (i-1)$, contradicting the fact that $i(i-1)< 2^i-1$. Therefore $i> a_i$ is impossible, so $i\leq a_i$.
\end{proof}

\section{Classification of signatures of spanning trees of $Q_n$}
\label{sec:classification}
We classify signatures of $Q_n$ as reducible or irreducible as follows. 

\begin{definition}
\label{defn:classification}
Let $\mathcal{S}= (a_1,\dots,a_n)$ be a signature of a spanning tree of $Q_n$. Then $\mathcal{S}$ is  \textbf{reducible} if there exists a proper nonempty subset $R$ of $[n]$ such that $\sum_{i\in R} a_i=2^{|R|}-1$. We say that $R$ is a \textbf{reducing set} for $\mathcal{S}$, and that $\mathcal{S}$ \textbf{reduces over $R$}. If no such set exists then $\mathcal{S}$ is \textbf{irreducible}.

By extension, we will say that a spanning tree $T$ is reducible or irreducible according to whether $\sig(T)$ is reducible or irreducible. If $\sig(T)$ is reducible with reducing set $R$, we will say that \textbf{$T$ reduces over $R$}. 
\end{definition}
Note that if $\mathcal{S}$ is irreducible then $a_i\geq 2$ for all $i$, because if $a_i=1$ then $\mathcal{S}$ reduces over $\{i\}$.

\begin{remark}
\label{rem:classification}
If $\mathcal{S}$ is ordered and $R\subseteq [n]$ satisfies $|R|=r$ then
\[
\sum_{i\in R} a_i \geq \sum_{i=1}^r a_i. 
\]
It follows that an ordered signature has a reducing set of size $r$ if and only if $[r]$ itself is a reducing set. If this holds then we have
$\sum_{i=1}^{r} a_i = 2^r-1$, and moreover $\sum_{i=1}^{k} a_i \geq 2^k-1$ for $1\leq k\leq r$, by the signature condition for $\mathcal{S}$. It follows that $\mathcal{S}'=(a_1,\ldots,a_r)$ is a signature of $Q_r$. Thus, an ordered signature is reducible if and only if it has a initial segment that is a signature of a lower dimensional cube.
More generally, a not-necessarily ordered signature $\mathcal{S}$ is reducible if and only if there is a proper nonempty subset $R$ of $[n]$ such that the restriction of $\mathcal{S}$ to the indices in $R$ gives a signature of $Q_R$.

\end{remark}

\begin{example}
\label{ex:classification}
Consider the following ordered signatures of $Q_7$:
\begin{align*}
\mathcal{S}_1 &= (2,2,4,8,16,32,63), & \mathcal{S}_3 &= (2,2,4,8,15,32,64),\\
\mathcal{S}_2 &= (2,2,3,9,15,33,63), & \mathcal{S}_4 &= (2,2,3,9,15,32,64).
\end{align*}
The signature $\mathcal{S}_1$ is irreducible, and the rest are reducible. Signature $\mathcal{S}_2$ reduces over $[3]$ and $[5]$; signature $\mathcal{S}_3$ reduces over $[5]$ and $[6]$; and signature $\mathcal{S}_4$ reduces over $[3]$, $[5]$ and $[6]$.
\end{example}

\begin{definition}
\label{defn:excess}
Let $\mathcal{S}=(a_1, \dots, a_n)$ be a signature of $Q_n$ and let $1\leq k\leq n$. We define the \textbf{excess of $\mathcal{S}$ at $k$}, $\excess{k}{\mathcal{S}}$, to be
\[
\excess{k}{\mathcal{S}}= \min_{\stacking{K\subseteq [n]}{|K|=k}} \left(\sum_{i\in K} a_i\right) - (2^k-1). 
\]
Thus, the excess at $k$ is the minimum quantity by which a set of $k$ directions exceeds the matching condition of Hall's Theorem. 
Consequently, $\mathcal{S}$ is irreducible if and only if $\excess{k}{\mathcal{S}}\geq 1$ for all $k\leq n-1$, and is reducible if and only if $\excess{k}{\mathcal{S}}=0$ for some $k\leq n-1$. 
Note that by definition $\excess{n}{\mathcal{S}}=0$, and if $\mathcal{S}$ is ordered then the excess at $k$ is simply given by
\[
\excess{k}{\mathcal{S}}=\left(\sum_{i=1}^k a_i\right) - (2^k-1).
\]
\end{definition}

\begin{remark}
\label{rem:excessequivalences}
Observe that for an ordered signature $\mathcal{S}=(a_1,\dots,a_n)$ of $Q_n$ and $r<n$, the following statements are equivalent:
\begin{enumerate}
\item
$(a_1,\dots,a_r)$ is a signature of $Q_r$.
\item
$\mathcal{S}$ reduces over $[r]$.
\item
$\excess{r}{\mathcal{S}}=0$.
\item
$\sum_{i=1}^r a_i = 2^r-1$. 
\end{enumerate}
Note further that if $\excess{k-1}{\mathcal{S}}=\excess{k}{\mathcal{S}}=0$, then $a_k=2^{k-1}$. 
\end{remark}

Reducible signatures of $Q_n$ can be divided into two types: \emph{strictly reducible} and \emph{quasi-irreducible} signatures. In order to define these we first introduce the notion of \emph{saturated} and \emph{unsaturated} signatures as follows.  

\begin{definition}
\label{defn:saturated}
Let $\mathcal{S}=(a_1, \dots, a_n)$ be a signature of $Q_n$. If there exists $r<n$ such that $\excess{k}{\mathcal{S}}=0$ for all $r\leq k\leq n$, then $\mathcal{S}$ is a \textbf{saturated} signature. If no such index exists than $\mathcal{S}$ is \textbf{unsaturated}. 
Equivalently, $\mathcal{S}$ is saturated if and only if it reduces over a set of size $n-1$. 

If $\mathcal{S}$ is ordered and $\excess{k}{\mathcal{S}}=0$ for all $r\leq k\leq n$, then we further say that $\mathcal{S}$ is  \textbf{saturated above direction $r$}. 
\end{definition}

Note that a saturated signature is necessarily reducible.
If the ordered signature $\mathcal{S}$ is saturated above direction $r$ then by Remark~\ref{rem:excessequivalences} we have $a_k=2^{k-1}$ for $r+1\leq k\leq n$,  and moreover the $k$--tuple $(a_1,\ldots,a_k)$ is a signature of $Q_k$ for $r\leq k\leq n$. We may therefore make the following definition:

\begin{definition}
\label{defn:unsaturatedpart}
Let $\mathcal{S}=(a_1,\dots,a_n)$ be an ordered signature of $Q_n$, and 
let $1\leq s\leq n$ be the least index such that $\excess{k}{\mathcal{S}}=0$ for all $s\leq k\leq n$  (such an $s$ exists because $\excess{n}{\mathcal{S}}=0$). Then the $s$--tuple $\unsat(\mathcal{S})$ defined by
\[
\unsat(\mathcal{S})=(a_1,\dots,a_s)
\]
is necessarily an unsaturated signature of $Q_s$, and is the \textbf{unsaturated part} of $\mathcal{S}$. 

If $\mathcal{S}$ is not ordered
we define $\unsat(\mathcal{S})$ to be the restriction of $\mathcal{S}$ to the entries appearing in the unsaturated part of an ordered permutation $\mathcal{S}'$ of $\mathcal{S}$. Write $\mathcal{S}'=(a_1',\ldots,a_n')$, and suppose that $\unsat(\mathcal{S}')= (a_1',\ldots,a_s')$. Then
\[
\mathcal{S}'=(a_1',\ldots,a_s',2^s,2^{s+1},\dots,2^{n-1}),
\]
and $a_i'<2^{s-1}$ for $1\leq i\leq s$. Thus, $\unsat(\mathcal{S})$ is the restriction of $\mathcal{S}$ to the entries satisfying $a_i<2^{s-1}$. Moreover, while there may be more than one permutation of $[n]$ that puts  $\mathcal{S}$ in increasing order (where there are indices $i\neq j$ such that $a_i=a_j$), there is no ambiguity in which indices occur in the unsaturated part.
\end{definition}

We use the unsaturated part to divide reducible signatures into quasi-irreducible and strictly reducible signatures:

\begin{definition}
\label{defn:quasi-strictly}
Let $\mathcal{S}$ be a reducible signature of $Q_n$. Then $\mathcal{S}$ is \textbf{quasi-irreducible} if the unsaturated part $\unsat(\mathcal{S})$ is irreducible. Otherwise, $\mathcal{S}$ is \textbf{strictly reducible}.

By extension, we will say that a reducible spanning tree $T$ of $Q_n$ is quasi-irreducible or strictly reducible according to whether $\sig(T)$ is quasi-irreducible or strictly reducible. 
\end{definition}

\begin{example}
For the signatures appearing in Example~\ref{ex:classification} we have
\begin{align*}
\unsat(\mathcal{S}_1) &= \mathcal{S}_1, & 
\unsat(\mathcal{S}_3) &= (2,2,4,8,15),\\
\unsat(\mathcal{S}_2) &= \mathcal{S}_2, & 
\unsat(\mathcal{S}_4) &= (2,2,3,9,15).
\end{align*}
Signatures $\mathcal{S}_1$ and $\mathcal{S}_2$ are unsaturated, while $\mathcal{S}_3$ and $\mathcal{S}_4$ are both saturated above direction 5. Signatures $\mathcal{S}_2$ and $\mathcal{S}_4$ are strictly reducible (their unsaturated parts both have $[3]$ as a reducing set), and signature $\mathcal{S}_3$ is quasi-irreducible. 
\end{example}

\begin{example}[Classification of signatures in low dimensions]
The unique signature $(1)$ of $Q_1$ is irreducible. Up to permutation $Q_2$ has the unique signature $(1,2)$, which is reducible and saturated, with unsaturated part $(1)$. It is therefore quasi-irreducible. The signatures of $Q_3$ up to permutation are $(1,2,4)$, $(1,3,3)$ and $(2,2,3)$, which are respectively quasi-irreducible, strictly reducible and irreducible. Of these only $(1,2,4)$ is saturated. 
\end{example}

\begin{example}[Classification of signatures of $Q_4$]
\label{ex:q4signatures-continued}
Consider again the signatures of $Q_4$ from Example~\ref{ex:q4signatures}:
\begin{center}
\begin{tabular}{cccccc} 
(1, 2, 4, 8)&(1, 2, 5, 7)&(1, 3, 5, 6)&(2, 2, 4, 7)&(2, 3, 4, 6)&(3, 3, 3, 6)\\
(1, 3, 3, 8)&(1, 2, 6, 6)&(1, 4, 4, 6)&(2, 2, 5, 6)&(2, 3, 5, 5)&(3, 3, 4, 5)\\
(2, 2, 3, 8)&(1, 3, 4, 7)&(1, 4, 5, 5)&(2, 3, 3, 7)&(2, 4, 4, 5)&(3, 4, 4, 4)
\end{tabular}
\end{center}
The signatures in the first column all have $[3]$ as a reducing set, while those in the second and third columns all have $[1]$ as a reducing set. Thus these nine signatures are reducible. The nine signatures appearing in the last three columns are all irreducible.

The signatures in the first column are obtained by appending $2^3=8$ to a signature of $Q_3$ (equivalently, have $[3]$ as a reducing set), so are saturated. The remaining signatures are unsaturated. The reducible signatures in the second and third columns are therefore strictly reducible. For the saturated signatures, we have
\begin{align*}
\unsat(1,2,4,8) &= (1), \\
\unsat(1,3,3,8) &= (1,3,3), \\
\unsat(2,2,3,8) &= (2,2,3),
\end{align*}
so just $(1,3,3,8)$ is strictly reducible, and the other two are quasi-irreducible.
\end{example}

The signatures $(1)$, $(1,2)$, $(1,2,4)$ and $(1,2,4,8)$ seen above are the first four members of an infinite family of signatures:

\begin{definition}
For $n\geq 1$ let $\supersat$ be the $n$--tuple $(a_1,\ldots,a_n)$ defined by
$a_i=2^{i-1}$ for $1\leq i\leq n$:
\[
\supersat = (1,2,4,8,\dots,2^{n-1}). 
\]
 Then $\sum_{i=1}^k a_i=2^k-1$ for all $1\leq k\leq n$, so $\supersat$ is a signature of $Q_n$.
\end{definition}

Observe that $\supersat$ satisfies $\excess{k}{\supersat}=0$ for $1\leq k\leq n$. It follows that \supersat\ is saturated above direction 1 for all $n\geq 2$, and $\unsat(\supersat)=(1)$ for all $n$. 
For $n\geq 2$ we will say that \supersat\ is \emph{supersaturated}:

\begin{definition}
Let $n\geq 2$. A signature $\mathcal{S}=(a_1, \dots, a_n)$ is \textbf{supersaturated} if $\excess{k}{\mathcal{S}}=0$ for all $1\leq k\leq n$. Equivalently, $\mathcal{S}$ is supersaturated if and only if it is a permutation of \supersat.
\end{definition}

\section{Consequences of the classification for upright trees}
\label{sec:uprighttrees}

In this section and the next we show that reducibility places strict structural constraints on a spanning tree of $Q_n$. We 
begin by restricting our attention to upright spanning trees, which are easily understood through their equivalence with sections of $\powernon{n}$. 

By identifying each upright tree $T$ with its associated section $\psi_T$, we may regard an upright tree as a choice of $x\in X$ at each nonempty subset $X$ of $[n]$. 
We may ask the following question:
\begin{question}
\label{q:existenceofsection}
Given a signature $\mathcal{S}$ of $Q_n$, a nonempty subset $X$ of $[n]$, and an element $x$ of $X$, does there exist an upright spanning tree $T$ with signature $\mathcal{S}$ such that $\psi_T(X)=x$?
\end{question}
Lemma~\ref{lem:uprightreducible} shows that, if $\mathcal{S}$ is reducible, then it is always possible to choose a nonempty subset $X$ of $[n]$ and an element $x$ of $X$ such that the answer to this question is ``no''. In contrast, Corollary~\ref{cor:specify1} shows that for irreducible $\mathcal{S}$, the answer to this question is always ``yes'', regardless of the choice of nonempty $X\subseteq [n]$ and $x\in X$. Loosely speaking, this means that we may arbitrarily specify the value of a section with irreducible signature $\mathcal{S}$ at any single vertex of our choice. We further show that under certain conditions (typically expressed in terms of the excess) we can specify the value of a section with signature $\mathcal{S}$ at one or more additional vertices.

\subsection{Reducible upright trees}
\label{sec:upright-reducible}

We show that reducibility constrains the edges of an upright spanning tree:

\begin{lemma}
\label{lem:uprightreducible}
Let $\mathcal{S}=(a_1, \dots, a_n)$ be a reducible signature of $Q_n$, and let $R$ be a reducing set for $\mathcal{S}$. Let $T$ be an upright spanning tree of $Q_n$ with signature $\mathcal{S}$ and let $X$ be a nonempty vertex of $Q_n$. Then $\psi_T(X)\in R$ if and only if $X\subseteq R$. 
\end{lemma}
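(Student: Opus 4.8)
The plan is to prove the lemma by a direct counting argument comparing two subsets of $\powernon{n}$ that turn out to have equal size. Write $r=|R|$. Since $T$ is upright with associated section $\psi_T$, and $\sig(T)=\sig(\psi_T)=\mathcal{S}$, the number of nonempty vertices $X$ with $\psi_T(X)=i$ is exactly $a_i$. Hence the set
\[
A=\{X\in\powernon{n}:\psi_T(X)\in R\}
\]
has cardinality $|A|=\sum_{i\in R}a_i$, and because $R$ is a reducing set for $\mathcal{S}$ this equals $2^r-1$.

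Next I would introduce the set of nonempty subsets of $R$,
\[
B=\{X\in\powernon{n}:X\subseteq R\},
\]
which also has cardinality $|B|=2^r-1$. The ``if'' direction of the lemma is then immediate from the defining property of a section: if $X\subseteq R$ is nonempty, then $\psi_T(X)\in X\subseteq R$, so $\psi_T(X)\in R$. This shows the inclusion $B\subseteq A$.

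Finally I would combine these two facts. Since $B\subseteq A$ and $|A|=|B|=2^r-1$, the inclusion must be an equality, $A=B$. The ``only if'' direction follows at once: if $\psi_T(X)\in R$ then $X\in A=B$, whence $X\subseteq R$. Together the two directions give the stated biconditional.

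I do not expect a genuine obstacle here; the entire content of the argument is recognising that the reducing-set condition $\sum_{i\in R}a_i=2^r-1$ makes $|A|$ coincide exactly with the number $2^r-1$ of nonempty subsets of $R$, so that the otherwise-trivial inclusion $B\subseteq A$ is promoted to equality purely on cardinality grounds. The only point requiring a little care is to take both $A$ and $B$ inside $\powernon{n}$, so that the empty set is excluded on both sides and the two counts genuinely match.
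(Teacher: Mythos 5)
Your proof is correct and follows essentially the same argument as the paper: the ``if'' direction is immediate from the section property, and the ``only if'' direction follows by counting, since the $\sum_{i\in R}a_i=2^{|R|}-1$ vertices with $\psi_T(X)\in R$ are exactly accounted for by the $2^{|R|}-1$ nonempty subsets of $R$. Your explicit formulation via the sets $A$ and $B$ and the promotion of the inclusion $B\subseteq A$ to an equality on cardinality grounds is just a slightly more formal phrasing of the paper's own reasoning.
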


This answers Question~\ref{q:existenceofsection} for reducible signatures, by showing that if $x\in[n]$ is chosen such that $x\in R$, then the answer is ``yes'' only if $X\subseteq R$.

\begin{proof}
The fact that $\psi_T(X)\in R$ for $X\subseteq R$ is immediate from the fact that $\psi_T$ is a section. For the converse, observe that in total $T$ has $\sum_{i\in R} a_i= 2^{|R|}-1$ edges in directions belonging to $R$, and $R$ has $2^{|R|}-1$ nonempty subsets. Thus all edges of $T$ in directions belonging to $R$ are accounted for at the subsets of $R$, so we must have $\psi_T(X)\notin R$ for $X\nsubseteq R$. 
\end{proof}

Applying Lemma~\ref{lem:uprightreducible} to an ordered saturated signature we get:
\begin{corollary}
\label{cor:upright-saturated}
Let $\mathcal{S}=(a_1, \dots, a_n)$ be an ordered signature. If $\mathcal{S}$ is saturated above direction $r$ and $X\nsubseteq [r]$, then $\psi_T(X)=\max X$. 
\end{corollary}
\begin{proof}
Since $\mathcal{S}$ is saturated above direction $r$, it reduces over $[s-1]$ for each $s>r$. If $\max X=s$, then $X\subseteq [s]$ but  $X\nsubseteq [s-1]$. Therefore $\psi_T(X)$ belongs to $[s]$ but not $[s-1]$, and hence $\psi_T(X)=s=\max X$.
\end{proof}

\begin{corollary}
\label{cor:saturatedcount}
Let the ordered  signature $\mathcal{S}=(a_1, \dots, a_n)$ of $Q_n$ be saturated above direction $r$. Then the number of upright spanning trees of $Q_n$ with signature $\mathcal{S}$ is equal to the number of upright spanning trees of $Q_r$ with signature $\mathcal{S}'=(a_1, \dots, a_r)$.

In particular, if the unsaturated part of $\mathcal{S}$ consists of the first $s$ entries of $\mathcal{S}$, then the number of upright spanning trees of $Q_n$ with signature $\mathcal{S}$ is equal to the number of upright spanning trees of $Q_s$ with signature $\unsat(\mathcal{S})$.
\end{corollary}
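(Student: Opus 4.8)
The plan is to exploit the equivalence between upright spanning trees and sections of $\powernon{n}$, together with the rigidity imposed by Corollary~\ref{cor:upright-saturated}, to set up an explicit bijection between the two families being counted. Since $\mathcal{S}$ is saturated above direction $r$ we have $\excess{r}{\mathcal{S}}=0$, so $[r]$ is a reducing set for $\mathcal{S}$ and Lemma~\ref{lem:uprightreducible} is available with $R=[r]$. First I would recall that upright trees of $Q_n$ with signature $\mathcal{S}$ correspond exactly to sections $\psi$ of $\powernon{n}$ with $\sig(\psi)=\mathcal{S}$, and likewise that upright trees of $Q_r$ with signature $\mathcal{S}'$ correspond to sections of $\powernon{r}$ with signature $\mathcal{S}'$.

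The central observation is that such a section $\psi$ is completely determined off $[r]$: by Corollary~\ref{cor:upright-saturated}, every vertex $X\nsubseteq[r]$ satisfies $\psi(X)=\max X$. Hence $\psi$ is determined by its restriction $\psi'=\psi|_{\powernon{r}}$, which is itself a section of $\powernon{r}$ since $\psi(X)\in X\subseteq[r]$ whenever $X\subseteq[r]$. I would define the restriction map $\psi\mapsto\psi'$ and check that it lands among the sections of $\powernon{r}$ with signature $\mathcal{S}'$: by Lemma~\ref{lem:uprightreducible} with $R=[r]$, every vertex mapping to a direction $i\leq r$ already lies in $\powernon{r}$, so the number of edges in each direction $i\leq r$ is unchanged by restriction, giving $\sig(\psi')=(a_1,\dots,a_r)=\mathcal{S}'$.

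Conversely, I would define the extension map sending a section $\psi'$ of $\powernon{r}$ to the section $\psi$ of $\powernon{n}$ that agrees with $\psi'$ on $\powernon{r}$ and sets $\psi(X)=\max X$ for $X\nsubseteq[r]$. By construction and by Corollary~\ref{cor:upright-saturated} the two maps are mutually inverse, so it remains only to verify that the extension preserves signatures. For a direction $i\leq r$, no vertex $X\nsubseteq[r]$ can map to $i$ under $\psi$, since such $X$ contains an element exceeding $r$ and hence $\max X>r\geq i$; thus direction $i$ receives exactly the $a_i$ edges contributed by $\psi'$. For a direction $k$ with $r<k\leq n$, the vertices with $\psi(X)=\max X=k$ are precisely the subsets of $[k]$ containing $k$, of which there are $2^{k-1}$; and since $\mathcal{S}$ is saturated above direction $r$, Remark~\ref{rem:excessequivalences} gives $a_k=2^{k-1}$, so the counts agree. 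This signature-preservation check is the step I expect to require the most care, since it is where the saturation hypothesis is genuinely used. Combining the two maps yields the desired bijection and hence the equality of counts. The ``in particular'' statement follows at once by taking $r=s$, where $s$ is the least index with $\excess{k}{\mathcal{S}}=0$ for all $s\leq k\leq n$: then $\mathcal{S}$ is saturated above direction $s$ and $(a_1,\dots,a_s)=\unsat(\mathcal{S})$.
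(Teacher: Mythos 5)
Your proposal is correct and follows essentially the same route as the paper: both proofs use Corollary~\ref{cor:upright-saturated} (equivalently Lemma~\ref{lem:uprightreducible} with $R=[r]$) to show the restriction $\psi_T\big|_{\powernon{r}}$ determines $T$ and preserves the signature, and both construct the inverse by extending a section of $\powernon{r}$ via $\psi(X)=\max X$ for $X\nsubseteq[r]$, using the count of $2^{k-1}$ subsets with maximum $k$ together with $a_k=2^{k-1}$ to verify the extension has signature $\mathcal{S}$. Your signature-preservation check and the derivation of the ``in particular'' clause by applying the main statement with $r=s$ match the paper's argument.
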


\begin{proof}
Given an upright spanning tree $T$ of $Q_n$ with signature $\mathcal{S}$
let $T'=T\cap Q_r$. Then $T'$ is an upright spanning tree of $Q_r$ with associated section 
$\psi_{T'}=\psi_T\big|_{\powernon{r}}$, the restriction of $\psi_T$ to $\powernon{r}$. 
Since $\mathcal{S}$ reduces over $[r]$ Lemma~\ref{lem:uprightreducible}
implies $\psi_T(X)\in [r]$ if and only if $X\subseteq [r]$, and it follows that $\sig(T')=\mathcal{S}'$. 

Conversely, given an upright spanning tree $T'$ of $Q_r$ with signature $\mathcal{S}'$, we can extend $T'$ to an upright spanning tree $T$ of $Q_n$ 
such that $T'=T\cap Q_r$
by defining
\[
\psi_T(X) = \begin{cases}
            \psi_{T'}(X), & \text{if $X\subseteq R$}, \\
            \max X,     & \text{otherwise}.
            \end{cases}
\]
For each $1\leq k\leq n$ there are $2^{k-1}$ subsets $X$ of $[n]$ such that $\max X=k$, so 
 $\sig(T)=(a_1,\ldots,a_r,2^r,2^{r+1},\ldots,2^{n-1})=\mathcal{S}$. Moreover, Corollary~\ref{cor:upright-saturated} shows that any upright spanning tree of $Q_n$ with signature $\mathcal{S}$ that extends $T'$ must co-incide with $T$. It follows that the map $T\mapsto T\cap Q_r$ is a bijection from the set of upright spanning trees of $Q_n$ with signature $\mathcal{S}$ to the set of upright spanning trees of $Q_r$ with signature $\mathcal{S}'$, proving the result.
\end{proof}

\begin{corollary}
There is only one upright spanning tree of $Q_n$ with the supersaturated signature $\supersat=(1, 2, 4, 8, \dots, 2^{n-1})$. 
\end{corollary}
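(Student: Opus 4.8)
The plan is to recognise this as an immediate consequence of the saturation results just established. Since $\excess{k}{\supersat}=0$ for all $1\leq k\leq n$, the signature $\supersat$ is saturated above direction $1$, and its unsaturated part is $\unsat(\supersat)=(1)$. I would therefore invoke Corollary~\ref{cor:saturatedcount} with $r=s=1$: the number of upright spanning trees of $Q_n$ with signature $\supersat$ equals the number of upright spanning trees of $Q_1$ with signature $(1)$. Since $Q_1$ is a single edge with exactly one spanning tree, this count is $1$.

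Alternatively, I would argue directly at the level of sections, which makes the uniqueness transparent. Let $T$ be any upright spanning tree with signature $\supersat$. Applying Corollary~\ref{cor:upright-saturated} with $r=1$, every vertex $X$ with $X\nsubseteq[1]$ satisfies $\psi_T(X)=\max X$. The only nonempty vertex not covered by this is $X=\{1\}$, for which the section condition $\psi_T(\{1\})\in\{1\}$ forces $\psi_T(\{1\})=1=\max\{1\}$. Hence $\psi_T(X)=\max X$ for every nonempty $X$, so the section $\psi_T$ is completely determined; by the bijection between upright trees and sections, $T$ is unique.

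I expect no real obstacle here: the statement is a corollary of the preceding machinery, and both routes take only a few lines. The only point requiring any care is the boundary vertex $X=\{1\}$, which falls outside the scope of Corollary~\ref{cor:upright-saturated} (that result addresses only the vertices $X\nsubseteq[r]$); it is handled directly by the defining property of a section.
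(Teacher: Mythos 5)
Your first argument is exactly the paper's proof: it deduces the result from Corollary~\ref{cor:saturatedcount} together with the fact that $\unsat(\supersat)=(1)$ and $Q_1$ has a unique upright spanning tree. Your alternative direct argument via Corollary~\ref{cor:upright-saturated} (with the boundary case $X=\{1\}$ handled by the section condition) is also correct, but the main route matches the paper, so there is nothing further to add.
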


\begin{proof}
The signature $\supersat$ satisfies $\unsat(\supersat)=(1)$. The signature $(1)$ has a unique upright tree, so the result follows immediately by Corollary~\ref{cor:saturatedcount}.
\end{proof}

\subsection{Irreducible upright trees}
\label{sec:upright-irreducible}

We now consider irreducible upright spanning trees, and show that in contrast to Lemma~\ref{lem:uprightreducible}, for $\mathcal{S}$ irreducible
the answer to Question~\ref{q:existenceofsection} is always ``yes'':
given nonempty $X\subseteq [n]$ and $x\in X$, there always exists an upright spanning tree $T$ with signature $\mathcal{S}$ such that $\psi_T(X)=x$. Since irreducible signatures satisfy $\excess{k}{\mathcal{S}}\geq 1$ for all $k<n$, we deduce this as a corollary to Theorem~\ref{thm:excess-ell}, which loosely speaking says that if $\excess{k}{\mathcal{S}}\geq \ell$ for all $k<n$, then we may arbitrarily specify the value of a section at $\ell$ vertices. In fact, the condition $\excess{k}{\mathcal{S}}\geq \ell$ for all $k<n$ appears to be a little stronger than necessary. For $\ell=2$ we show in Theorem~\ref{thm:specifytwo} that, under certain conditions, we can specify the value of a section at two vertices even when we do not have $\varepsilon^{\mathcal{S}}_k\geq 2$ for all $k$. To prove this result we require Lemma~\ref{lem:a_i=a_{i+1}}, which shows that when $a_k$ and $a_{k+1}$ are close enough, the excess at $k$ must be at least 2.

\begin{theorem}
\label{thm:excess-ell}
Let $\ell$ be a positive integer, and let $\mathcal{S}$ be a signature of $Q_n$ such that $\excess{k}{\mathcal{S}}\geq\ell$ for $1\leq k<n$. Let $X_1,\dots,X_\ell$ be distinct nonempty vertices of $Q_n$, and let $x_t\in X_t$ for each $t$. Then there is an upright spanning tree $T$ of $Q_n$ with signature $\mathcal{S}$ such $\psi_T(X_t)=x_t$ for $1\leq t\leq\ell$.
\end{theorem}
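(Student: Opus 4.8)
The plan is to translate the statement, via the signature-preserving bijection between upright spanning trees and sections of $\powernon{n}$, into a statement about perfect matchings in the bipartite graph $G_\mathcal{S}$ constructed in the proof of Theorem~\ref{thm:characterisation}. Recall that there $A=\powernon{n}$, that $B$ has $2^n-1$ vertices of which $a_i$ are labelled $i$, and that $V\in A$ is joined to every vertex of $B$ labelled by some element of $V$. A section with signature $\mathcal{S}$ is precisely a perfect matching of $G_\mathcal{S}$, and under this correspondence the requirement $\psi_T(X_t)=x_t$ becomes the requirement that $X_t$ be matched to a vertex of $B$ labelled $x_t$. So it suffices to exhibit a perfect matching of $G_\mathcal{S}$ in which each $X_t$ is matched to a vertex labelled $x_t$. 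This generalises the $\ell=1$ idea recorded after Hall's Theorem (forcing a single prescribed edge into a perfect matching) to forcing $\ell$ prescribed edges simultaneously. The case $n=1$ is trivial, so I would assume $n\geq2$.

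First I would select the $\ell$ target vertices in $B$. Applying the hypothesis $\excess{k}{\mathcal{S}}\geq\ell$ at $k=1$ (available since $1<n$) gives $a_i\geq\ell+1$ for every direction $i$. Hence each label class of $B$ has more than $\ell$ vertices, which is more than enough to choose distinct vertices $b_1,\dots,b_\ell\in B$ with $b_t$ labelled $x_t$, even when several of the prescribed values $x_t$ coincide. Each $b_t$ is a neighbour of $X_t$ in $G_\mathcal{S}$ because $x_t\in X_t$.

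The heart of the argument is then to match $X_t$ to $b_t$ for each $t$ and extend to a perfect matching, i.e.\ to show that the bipartite graph $G'$ obtained from $G_\mathcal{S}$ by deleting $X_1,\dots,X_\ell$ from $A$ and $b_1,\dots,b_\ell$ from $B$ has a perfect matching. Both parts of $G'$ have size $2^n-1-\ell$, so by Hall's Theorem it is enough to verify $|N_{G'}(Y)|\geq|Y|$ for every $Y$ in the reduced $A$. Since $N_{G'}(Y)=N_{G_\mathcal{S}}(Y)\setminus\{b_1,\dots,b_\ell\}$, we have $|N_{G'}(Y)|\geq|N_{G_\mathcal{S}}(Y)|-\ell$. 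Writing $k=|\supp(Y)|$ and using $|N_{G_\mathcal{S}}(Y)|=\sum_{i\in\supp(Y)}a_i$ as in the proof of Theorem~\ref{thm:characterisation}, I would split into two cases: if $k<n$ then the excess hypothesis gives $|N_{G_\mathcal{S}}(Y)|\geq 2^k-1+\ell$, whence $|N_{G'}(Y)|\geq 2^k-1=|\mathcal{P}_{\geq 1}(\supp(Y))|\geq|Y|$; while if $k=n$ then $N_{G_\mathcal{S}}(Y)$ is all of $B$, so $N_{G'}(Y)$ is the whole reduced $B$ and $|N_{G'}(Y)|=2^n-1-\ell\geq|Y|$.

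In both cases Hall's condition holds, so $G'$ has a perfect matching; combining it with the edges $X_tb_t$ yields a perfect matching of $G_\mathcal{S}$, hence a section $\psi$ with $\sig(\psi)=\mathcal{S}$ and $\psi(X_t)=x_t$, and finally the required upright tree $T$ through the bijection of Section~\ref{sec:upright}. The main obstacle is the case analysis in the Hall verification: the surplus $|N(Y)|-|Y|$ is at least $\ell$ exactly for sets $Y$ with $\supp(Y)\neq[n]$ (this is where the excess hypothesis is used), but degenerates to $0$ when $\supp(Y)=[n]$, and one must check that deleting only $\ell$ vertices from each side preserves Hall's condition in that boundary case. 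The secondary point — choosing the $b_t$ distinct when the prescribed $x_t$ repeat — is handled by the bound $a_i\geq\ell+1$.
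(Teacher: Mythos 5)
Your proposal is correct and follows essentially the same route as the paper's proof: both translate the problem into forcing $\ell$ prescribed edges into a perfect matching of the bipartite graph $G_\mathcal{S}$, use $\excess{1}{\mathcal{S}}\geq\ell$ to get $a_i\geq\ell+1$ and hence distinct targets $b_t$, and then verify Hall's condition on the reduced graph with the same case split on whether $\supp(Y)=[n]$. No substantive differences.
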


\begin{proof}
Let $G_{\mathcal{S}}$ be the matching graph with bipartition $(A, B)$ constructed in the proof of Theorem~\ref{thm:characterisation}. By hypothesis we have
\[
\excess{1}{\mathcal{S}} = \min_{i\in[n]} a_i -1 \geq \ell,
\]
so $a_i\geq \ell+1$ for all $i$. It follows that there exists a partial matching $M$ in $G_\mathcal{S}$ such that $X_t$ is matched with a vertex $v_t\in B$ labelled $x_t$ for $1\leq t\leq\ell$. 
Let $G_{\mathcal{S}}'$ be the matching graph with the vertices $X_1,\dots,X_\ell$, $v_1,\dots,v_\ell$ and all incident edges deleted.
We show that $M$ can be extended to a perfect matching in $G_\mathcal{S}$ by showing that there exists a perfect matching in $G_{\mathcal{S}}'$. 

Let $A'= A-\{X_1,\dots,X_\ell\}$, $B'=B-\{v_1,\ldots,v_\ell\}$, and for $1\leq i\leq n$ let $a_i'$ be the number of vertices labelled $i$ in $B'$.  Given $\emptyset\neq Y\subseteq A'$ let $Z=\supp(Y)$, the union of the sets in $Y$.
If $Z=[n]$ then $N(Y)=B'$, and since $|Y|\leq |A'|=|B'|$ the Hall condition holds for $Y$. Otherwise we have $|Y|\leq 2^{|Z|}-1$ and $\excess{|Z|}{\mathcal{S}}\geq\ell$, so 
\[
|N(Y)| = \sum_{i\in Z} a_i' \geq \left(\sum_{i\in Z}a_i\right)-\ell
  \geq (2^{|Z|}+\excess{|Z|}{\mathcal{S}}-1)-\ell \geq 2^{|Z|}-1\geq |Y|.
\]
Therefore the Hall condition holds for all nonempty $Y\subseteq A'$, so $G_\mathcal{S}'$ has a perfect matching, as required. The resulting perfect matching in $G_\mathcal{S}$ extending $M$ corresponds to a section $\psi$ of $\powernon{n}$ such that $\psi(X_t)=x_t$ for $1\leq t\leq\ell$, proving the existence of the required upright spanning tree.
\end{proof}

Specialising to the case $\ell=1$ we answer Question~\ref{q:existenceofsection} in the affirmative for irreducible signatures:

\begin{corollary}
\label{cor:specify1}
Let $\mathcal{I}$ be an irreducible signature of $Q_n$. Let $X$ be a nonempty vertex of $Q_n$ and let $x\in X$. Then there exists an upright spanning tree $T$ of $Q_n$ with signature $\mathcal{I}$ such that $\psi_{T}(X)=x$. 
\end{corollary}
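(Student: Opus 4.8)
The plan is to deduce this immediately from Theorem~\ref{thm:excess-ell} by specialising to the case $\ell=1$. The only thing that needs checking is that an irreducible signature satisfies the hypothesis of that theorem with $\ell=1$, and this is precisely the characterisation of irreducibility recorded in Definition~\ref{defn:excess}: a signature $\mathcal{S}$ is irreducible if and only if $\excess{k}{\mathcal{S}}\geq 1$ for all $k\leq n-1$. Thus an irreducible signature $\mathcal{I}$ satisfies $\excess{k}{\mathcal{I}}\geq 1$ for all $1\leq k<n$, which is exactly the hypothesis required to apply the theorem with $\ell=1$.

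Concretely, I would first invoke the equivalence from Definition~\ref{defn:excess} to observe that $\excess{k}{\mathcal{I}}\geq 1=\ell$ for all $1\leq k<n$. Then I would apply Theorem~\ref{thm:excess-ell} with the single chosen vertex $X_1=X$ and the single prescribed value $x_1=x\in X$. The theorem directly yields an upright spanning tree $T$ of $Q_n$ with signature $\mathcal{I}$ satisfying $\psi_T(X)=x$, which is exactly the conclusion sought.

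There is essentially no obstacle here, since all of the substantive work—constructing the partial matching $M$ that matches $X$ to a vertex labelled $x$, and verifying via Hall's condition that $M$ extends to a perfect matching after deleting the matched vertices—has already been carried out in the proof of Theorem~\ref{thm:excess-ell}. The corollary is a pure specialisation, so the proof consists only of the two sentences linking irreducibility to the excess bound and then quoting the theorem. The one point worth stating explicitly, for the reader, is that the answer to Question~\ref{q:existenceofsection} is therefore uniformly ``yes'' in the irreducible case, in sharp contrast to Lemma~\ref{lem:uprightreducible}, where reducibility forces the answer to be ``no'' for suitable choices of $X$ and $x$.
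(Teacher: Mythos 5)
Your proposal is correct and is exactly the paper's proof: the paper likewise observes that irreducibility gives $\excess{k}{\mathcal{I}}\geq 1$ for all $1\leq k<n$ and then applies Theorem~\ref{thm:excess-ell} with $\ell=1$. Nothing is missing.
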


\begin{proof}
Since $\mathcal{I}$ is irreducible it satisfies $\excess{k}{\mathcal{I}}\geq 1$ for $1\leq k<n$. The result therefore follows immediately from Theorem~\ref{thm:excess-ell}.
\end{proof}

We use the following lemma to show for $\ell=2$ that the excess condition of Theorem~\ref{thm:excess-ell} can be weakened slightly under certain conditions.

\begin{lemma}
\label{lem:a_i=a_{i+1}}
Let $n\geq 4$ and let $\mathcal{I}=(a_1, \dots, a_n)$ be an ordered irreducible signature of $Q_n$. Suppose that, for some $i\in \{2, \dots, n-1\}$, we have $a_{i+1}-a_i\leq 1$. Then $\varepsilon_i^{\mathcal{I}}\geq 2$.
\end{lemma}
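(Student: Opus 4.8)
The plan is to argue by contradiction. Since $\mathcal{I}$ is irreducible and $i<n$, we already know $\excess{i}{\mathcal{I}}\geq 1$; I would instead suppose $\excess{i}{\mathcal{I}}=1$ and derive a contradiction. The engine of the argument is the telescoping identity for ordered signatures,
\[
\excess{i+1}{\mathcal{I}} = \excess{i}{\mathcal{I}} + a_{i+1} - 2^i,
\]
which follows immediately from $\excess{k}{\mathcal{I}}=\sum_{j=1}^k a_j-(2^k-1)$. With $\excess{i}{\mathcal{I}}=1$ this reads $\excess{i+1}{\mathcal{I}}=1+a_{i+1}-2^i$. Combining a lower bound on $a_{i+1}$ with the hypothesis $a_{i+1}\leq a_i+1$ will force $a_i$ to be large, and since $\sum_{j=1}^i a_j=2^i$ this will squeeze the partial sum $\sum_{j=1}^{i-1}a_j$ below the floor $2(i-1)$ guaranteed by irreducibility (which ensures $a_j\geq 2$ for every $j$).

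I would split into two cases according to the position of $i+1$ relative to $n$. In the case $i+1<n$, irreducibility gives $\excess{i+1}{\mathcal{I}}\geq 1$, so the identity yields $a_{i+1}\geq 2^i$; the hypothesis then gives $a_i\geq 2^i-1$, and hence $\sum_{j=1}^{i-1}a_j=2^i-a_i\leq 1$. Since $i\geq 2$ there is at least one such term, each at least $2$, so $\sum_{j=1}^{i-1}a_j\geq 2$, a contradiction. In the case $i+1=n$ (equivalently $i=n-1$), we instead have $\excess{n}{\mathcal{I}}=0$ by definition, so the identity gives $a_{i+1}=2^i-1$, whence $a_i\geq 2^i-2$ and $\sum_{j=1}^{i-1}a_j\leq 2$. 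Here the hypothesis $n\geq 4$ does the essential work: it forces $i=n-1\geq 3$, so there are at least two terms $a_1,\dots,a_{i-1}$, each at least $2$, giving $\sum_{j=1}^{i-1}a_j\geq 2(i-1)\geq 4$, again a contradiction.

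I expect the only genuine subtlety to be the boundary case $i+1=n$, which is also precisely where the hypothesis $n\geq 4$ is needed. There $\excess{i+1}{\mathcal{I}}$ equals $0$ rather than being bounded below by $1$, which weakens the resulting bound on $a_{i+1}$ from $2^i$ to $2^i-1$ and correspondingly loosens the bound on $\sum_{j<i}a_j$ from $1$ to $2$; the extra room provided by having at least two earlier terms ($i-1\geq 2$) is exactly what is required to still reach a contradiction. The interior case $i+1<n$ is comparatively routine once the excess identity is in hand.
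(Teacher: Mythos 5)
Your proof is correct and follows essentially the same route as the paper's: assume $\varepsilon_i^{\mathcal{I}}=1$ for contradiction, split on whether $i+1<n$ or $i+1=n$, and derive the same lower bounds $a_{i+1}\geq 2^i$ and $a_{i+1}\geq 2^i-1$ respectively before invoking $a_{i+1}-a_i\leq 1$. The only (cosmetic) difference is in the closing step: the paper bounds $a_i\leq 2^{i-1}$ using $\sum_{j=1}^{i-1}a_j\geq 2^{i-1}$ and contradicts $a_{i+1}-a_i\leq 1$ directly, whereas you bound $a_i$ below and contradict $\sum_{j=1}^{i-1}a_j\geq 2(i-1)$, using only $a_j\geq 2$.
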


We note that the condition $n\geq4$ is necessary in Lemma~\ref{lem:a_i=a_{i+1}}. The irreducible signature $(2,2,3)$ with $i=2$ is a counterexample for $n=3$. 

\begin{proof}
Since $\mathcal{I}$ is irreducible we necessarily have $\varepsilon_i^{\mathcal{I}}\geq 1$. Suppose that $\varepsilon_i^{\mathcal{I}}=1$. Then since $\mathcal{I}$ is irreducible we have
\[
2^{i}=\sum_{j=1}^i a_j=\sum_{j=1}^{i-1}a_j+a_i\geq 2^{i-1}+a_i,
\]
and therefore $a_i\leq 2^{i-1}$. If $i<n-1$ then
\[
2^{i+1}\leq \sum_{j=1}^{i+1} a_j =\sum_{j=1}^i a_j + a_{i+1}= 2^{i}+a_{i+1},
\]
which implies $a_{i+1}\geq 2^i$. But then $a_{i+1}-a_i\geq 2^{i-1}\geq 2$, a contradiction. Similarly, if $i=n-1$ then 
\[
2^{i+1}-1\leq \sum_{j=1}^{i+1} a_j =\sum_{j=1}^i a_j + a_{i+1}= 2^{i}+a_{i+1},
\]
which implies $a_{i+1}\geq 2^i-1$. Then $a_{i+1}-a_i\geq 2^{i-1}-1=2^{n-2}-1\geq 3$, and we again reach a contradiction. Therefore it must in fact be the case that $\varepsilon_i^{\mathcal{I}}\geq 2$. 
\end{proof}

For $\ell=2$ we may weaken the excess condition of Theorem~\ref{thm:excess-ell} as follows:

\begin{theorem}
\label{thm:specifytwo}
Let $n\geq 4$, and let $\mathcal{I} = (a_1, \dots, a_n)$ be an ordered irreducible signature of $Q_n$.  Let $X_1,X_2$ be distinct nonempty vertices of $Q_n$, and let $x_t\in X_t$ for $t=1,2$. Suppose that one of the following two conditions holds:
\begin{enumerate}
\item\label{item:excess2}
$\varepsilon_k^{\mathcal{I}}\geq 2$ for all $k\geq \max \{x_1,x_2\}$.
\item\label{item:maxlemma}
$x_1\neq x_2$, and either $x_1=\max X_1$ or $x_2=\max X_2$.
\end{enumerate}
Then there exists an upright spanning tree $T$ of $Q_n$ with signature $\mathcal{I}$ such $\psi_T(X_t)=x_t$ for $t=1,2$.
\end{theorem}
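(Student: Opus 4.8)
The plan is to prove Theorem~\ref{thm:specifytwo} by adapting the matching argument from the proof of Theorem~\ref{thm:excess-ell} to the two-vertex case, building the same matching graph $G_\mathcal{I}$ with bipartition $(A,B)$ and showing that a partial matching sending $X_1\mapsto v_1$ and $X_2\mapsto v_2$ (where $v_t\in B$ is labelled $x_t$) extends to a perfect matching. As before, the heart of the matter is verifying the Hall condition $|N(Y)|\geq|Y|$ for every nonempty $Y\subseteq A'=A-\{X_1,X_2\}$ in the reduced graph $G_\mathcal{I}'$, where two vertices have been deleted from each side. The difficulty, compared to Theorem~\ref{thm:excess-ell}, is that we only assume $\varepsilon_k^\mathcal{I}\geq 1$ in general (irreducibility), not $\varepsilon_k^\mathcal{I}\geq 2$, so the crude bound $|N(Y)|\geq\bigl(\sum_{i\in Z}a_i\bigr)-2\geq 2^{|Z|}-1$ may fail by exactly one when $\varepsilon_{|Z|}^\mathcal{I}=1$.

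First I would set $Z=\supp(Y)$ and handle the easy cases: if $Z=[n]$ then $N(Y)=B'$ and Hall holds since $|Y|\leq|A'|=|B'|$; and if $\varepsilon_{|Z|}^\mathcal{I}\geq 2$ the computation from Theorem~\ref{thm:excess-ell} goes through verbatim. So the only dangerous sets $Y$ are those with $Z\subsetneq[n]$ and $\varepsilon_{|Z|}^\mathcal{I}=1$, for which the bound reads $|N(Y)|\geq 2^{|Z|}-2$ while potentially $|Y|=2^{|Z|}-1$. The key observation is that the deficit of one only arises when \emph{both} deleted label-vertices $v_1,v_2$ lie over $Z$, i.e.\ when $x_1,x_2\in Z$; if at most one of $x_1,x_2$ lies in $Z$ then only one label is removed from $N(Y)$ and Hall is safe. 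Moreover, equality $|Y|=2^{|Z|}-1$ forces $Y=\mathcal{P}_{\geq 1}(Z)$, so both $X_1$ and $X_2$ would have to lie in $\mathcal{P}_{\geq 1}(Z)$ — but $X_1,X_2\notin A'$, so in fact $Y\subseteq A'$ can contain at most $2^{|Z|}-1-|\{X_1,X_2\}\cap\mathcal{P}_{\geq1}(Z)|$ sets. This counting slack is what I would exploit to recover the inequality.

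The two hypotheses are then dispatched separately. Under condition~\ref{item:excess2}, the only way to have $\varepsilon_{|Z|}^\mathcal{I}=1$ with both $x_1,x_2\in Z$ is $|Z|<\max\{x_1,x_2\}$, which is impossible since $x_1,x_2\in Z$ forces $|Z|\geq\max\{x_1,x_2\}$; hence whenever the excess is $1$ at least one of $x_1,x_2$ is absent from $Z$ and the deficit never materialises. Under condition~\ref{item:maxlemma}, say $x_1=\max X_1$: if both $x_1,x_2\in Z$ and $\varepsilon_{|Z|}^\mathcal{I}=1$, I would invoke Lemma~\ref{lem:a_i=a_{i+1}} in contrapositive form, which (since $\varepsilon_{|Z|}^\mathcal{I}=1<2$) forces $a_{|Z|+1}-a_{|Z|}\geq 2$; combined with the label-counting refinement this guarantees enough neighbours. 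The main obstacle I anticipate is the bookkeeping in condition~\ref{item:maxlemma}: one must carefully relate the index $k=|Z|$ at which the excess is $1$ to the removed labels $x_1,x_2$ and the maximality assumption $x_1=\max X_1$, and confirm that the edge cases $|Z|\in\{1,n-1\}$ (where Lemma~\ref{lem:a_i=a_{i+1}} is stated only for $i\in\{2,\dots,n-1\}$) are either vacuous or handled by a direct estimate. Once the Hall condition is established in all cases, Hall's Theorem yields the desired perfect matching, which corresponds to the required section $\psi_T$, completing the proof.
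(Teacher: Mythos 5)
Your setup --- the matching graph, the reduced graph $G_{\mathcal{I}}'$, and the identification of the only dangerous case as $x_1,x_2\in Z$ with $X_1,X_2\nsubseteq Z$ and $|Y|=2^{|Z|}-1$ --- agrees with the paper's proof. However, your treatment of condition~\ref{item:excess2} rests on a false claim: $x_1,x_2\in Z$ does \emph{not} force $|Z|\geq\max\{x_1,x_2\}$. The set $Z=\supp(Y)$ is an arbitrary subset of $[n]$; for instance $Z=\{x_1,x_2\}$ contains both labels but has only two elements, so $|Z|<\max\{x_1,x_2\}$ is entirely possible. This is not a peripheral case --- it is the whole content of the proof of condition~\ref{item:excess2}. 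When $z=|Z|<m=\max\{x_1,x_2\}$ the hypothesis $\varepsilon_k^{\mathcal{I}}\geq 2$ for $k\geq m$ says nothing directly about $\varepsilon_z^{\mathcal{I}}$, and the paper must argue as follows: since $m\in Z$ but $|Z|=z<m$, the largest element $i_z$ of $Z$ satisfies $i_z>z$, hence $a_{i_z}\geq a_{z+1}$; if $\sum_{i\in Z}a_i\leq 2^z$ then a squeeze between $\sum_{s=1}^{z}a_s\geq 2^z$ and $\sum_{i\in Z}a_i$ forces $a_{i_z}=a_{z+1}=a_z$, whereupon Lemma~\ref{lem:a_i=a_{i+1}} yields $\varepsilon_z^{\mathcal{I}}\geq 2$ when $z\geq 2$, and the residual case $z=1$ requires a separate direct estimate (there $a_i=a_1$ for $i\leq m$, and $a_1=2$ would contradict $\varepsilon_m^{\mathcal{I}}\geq 2$). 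Your version skips all of this by declaring the case vacuous, so condition~\ref{item:excess2} is not proved.

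For condition~\ref{item:maxlemma} your instinct to use Lemma~\ref{lem:a_i=a_{i+1}} in contrapositive is the right one, but the argument as sketched does not close. Knowing $a_{z+1}-a_z\geq 2$ (or, in the paper's arrangement, deriving $a_{z+1}=a_z$ and contradicting the lemma) is only useful once you know that $\sum_{i\in Z}a_i$ dominates $\sum_{s=1}^{z-1}a_s+a_{z+1}$, and that requires showing $i_z>z$, i.e.\ $Z\neq[z]$. This is precisely where the hypothesis $x_1=\max X_1$ enters: if $Z=[z]$ then $\max X_1=x_1\in Z$ would give $X_1\subseteq[x_1]\subseteq Z$, contradicting $X_1\nsubseteq Z$. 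You flag ``relating $k=|Z|$ to the removed labels and the maximality assumption'' as an anticipated obstacle, but this is the essential step rather than bookkeeping, and it is absent. (The hypothesis $x_1\neq x_2$ is also what guarantees $z\geq 2$ so that Lemma~\ref{lem:a_i=a_{i+1}} applies; that should be said explicitly.) In short: the architecture is correct, but both case analyses are missing the squeeze argument through $a_{i_z}\geq a_{z+1}$ that carries the actual weight of the proof, and condition~\ref{item:excess2} as written is based on an incorrect inference.
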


\begin{proof}
Let $G_{\mathcal{I}}$ be the matching graph with bipartition $(A, B)$ constructed in the proof of Theorem~\ref{thm:characterisation}. Since $\mathcal{I}$ is irreducible we have $a_i\geq 2$ for all $i$, so there exists a partial matching $M$ of $A$ into $B$ such that $X_t$ is matched with a vertex $v_t$ labelled $x_t$ for $t=1,2$. 
Let $G_{\mathcal{I}}'$ be the matching graph with the vertices $X_1,X_2$, $v_1,v_2$ and all incident edges deleted.
We show that $M$ can be extended to a perfect matching in $G_\mathcal{I}$ by showing that there exists a perfect matching in $G_{\mathcal{I}}'$. 

Let $A'= A-\{X_1,X_2\}$, $B'=B-\{v_1,v_2\}$, and for $1\leq i\leq n$ let $a_i'$ be the number of vertices labelled $i$ in $B'$.  Given $\emptyset\neq Y\subseteq A'$ let $Z=\supp(Y)$, the union of the sets in $Y$, and set $z=|Z|$. 
If $z=n$ then $N(Y)=B'$, and since $|Y|\leq |A'|=|B'|$ the Hall condition holds for $Y$. Otherwise we have
\begin{equation}
\label{eq:n(y)}
|N(Y)| = \sum_{i\in Z} a_i' = \left(\sum_{i\in Z}a_i\right)-\chi_Z(x_1)-\chi_Z(x_2),
\end{equation}
where $\chi_Z:[n]\to\{0,1\}$ is the characteristic function of $Z$; and
\[
|Y|\leq 2^{z}-1 - \chi_{\power{Z}}(X_1)-\chi_{\power{Z}}(X_2)\leq  2^{z}-1,
\]
where $\chi_{\power{Z}}:\power{[n]}\to \{0,1\}$ is the characteristic function of $\power{Z}$. 
Since $\mathcal{I}$ is irreducible we have $\sum_{i\in Z}a_i \geq 2^{z}$, so
\[
|N(Y)|\geq 2^{z}-2,
\]
with equality possible only if $\sum_{i\in Z}a_i = 2^{z}$ and $x_1,x_2\in Z$. On the other hand we have $|Y|\leq  2^{z}-1$, with equality possible only if $X_1,X_2\nsubseteq Z$ and $Y=\mathcal{P}_{\geq 1}(Z)$. 
It follows that $|N(Y)|\geq|Y|$ except possibly when $x_1,x_2\in Z$ and  $X_1,X_2\nsubseteq Z$. 

Suppose then that $x_1,x_2\in Z$ but $X_1,X_2\nsubseteq Z$. We show under each of the conditions given in the theorem that we have $\sum_{i\in Z} a_i \geq 2^{z}+1$, so that $|N(Y)|\geq 2^{z}-1\geq |Y|$ as needed.

\begin{enumerate}
\item
Suppose that $\excess{k}{\mathcal{I}}\geq 2$ for all $k\geq m=\max\{x_1,x_2\}$.
As in the proof of Theorem~\ref{thm:characterisation} let
$Z=\{i_1, i_2, \dots, i_z\}$, where $i_1<i_2<\dots <i_z$.
If $z\geq m$ then $\excess{z}{\mathcal{I}}\geq 2$, so $\sum_{i\in Z} a_i\geq 2^z+1$ and we are done. Otherwise we have $z<m$, and then 
$i_z>z$, because $m\in Z$ but $|Z|<m$. Therefore $a_{i_z}\geq a_{z+1}$, because $\mathcal{I}$ is ordered. If $\sum_{i\in Z} a_i \geq 2^{z}+1$ does not hold then
\begin{equation}
\label{eq:squeeze}
2^{z}\geq \sum_{i\in Z} a_i 
  =  \sum_{s=1}^z a_{i_s}
 \geq\sum_{s=1}^{z-1} a_s+ a_{i_z} 
 \geq\sum_{s=1}^{z-1} a_s+ a_{z+1} 
 \geq\sum_{s=1}^z a_s\geq 2^z,
\end{equation}
and so $a_{i_z}=a_{z+1}=a_{z}$.

If $z\geq 2$ then by Lemma~\ref{lem:a_i=a_{i+1}} we have $\excess{z}{\mathcal{I}}\geq 2$, and so in fact
\[
\sum_{i\in Z} a_i 
 \geq\sum_{s=1}^z a_s\geq 2^z+1
\]
after all. Otherwise, if $z=1$ then $a_i=a_1$ for $1\leq i\leq i_z$, and so in particular for $1\leq i\leq m$. But then if $a_1=2$ we have $\sum_{i=1}^{m} a_i = 2m < 2^{m}+1$, contradicting our hypothesis that $\excess{m}{\mathcal{I}}\geq 2$. Therefore $\sum_{i\in Z} a_i =a_m \geq 3 = 2^{z}+1$ in this case also.
\item
Under the hypothesis that $x_1=\max X_1$ or $x_2=\max X_2$ we may assume without loss of generality that $x_1=\max X_1$.  
As above we let $Z=\{i_1, i_2, \dots, i_z\}$, where $i_1<i_2<\dots <i_z$, and we note that $z\geq 2$ because $x_1\neq x_2$ and $\{x_1,x_2\}\subseteq Z$. Then since $x_1=\max X_1\in Z$ and $X_1\nsubseteq Z$ it cannot be the case that $Z=[z]$, so as in Case~\ref{item:excess2} we have $i_z>z$ and hence $a_{i_z}\geq a_{z+1}$. Arguing as in Equation~\eqref{eq:squeeze} we therefore get $a_{z+1}=a_z$, and then since $z\geq 2$ we again 
have $\sum_{i\in Z}a_i\geq 2^z+1$, by Lemma~\ref{lem:a_i=a_{i+1}}. 
\end{enumerate}
Therefore the Hall condition holds for all nonempty $Y\subseteq A'$, so $G_\mathcal{I}'$ has a perfect matching, as required. The resulting perfect matching in $G_\mathcal{I}$ extending $M$ corresponds to a section $\psi$ of $\powernon{n}$ such that $\psi(X_t)=x_t$ for $t=1,2$, proving the existence of the required upright spanning tree.
\end{proof}

For completeness we consider the extent to which the hypothesis $x_1\neq x_2$ is necessary in Case~\ref{item:maxlemma} of Theorem~\ref{thm:specifytwo}. We show that this hypothesis can in fact be eliminated except in very limited circumstances:

\begin{proposition}
\label{rem:maxlemma}
Under the hypotheses of Theorem~\ref{thm:specifytwo}, suppose that $x_1=x_2=x$, and either $\max X_1$ or $\max X_2$ is equal to $x$. Then the conclusion of  Theorem~\ref{thm:specifytwo} still holds unless $x=2$, $a_1=a_2=2$, and (perhaps after permuting them) we have $X_1=\{1,2\}$ and $X_2\nsubseteq\{1,2\}$. 
\end{proposition}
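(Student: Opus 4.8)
The plan is to adapt the matching argument of Theorem~\ref{thm:specifytwo}, tracking carefully the one extra degeneracy caused by $x_1=x_2$. First I would relabel $X_1,X_2$ if necessary so that $\max X_1=x$. Since $\mathcal{I}$ is irreducible we have $a_x\geq 2$, so as before there is a partial matching $M$ of the matching graph $G_{\mathcal{I}}$ sending $X_1$ and $X_2$ to distinct vertices $v_1,v_2$ of $B$, both labelled $x$. Deleting $X_1,X_2,v_1,v_2$ gives $G_{\mathcal{I}}'$, and the key bookkeeping difference from Theorem~\ref{thm:specifytwo} is that both deleted right-vertices now carry the same label, so $a_x'=a_x-2$ while $a_i'=a_i$ for $i\neq x$. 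The conclusion holds if and only if $G_{\mathcal{I}}'$ has a perfect matching, so I would check the Hall condition for every nonempty $Y\subseteq A'$, writing $Z=\supp(Y)$ and $z=|Z|$.

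Repeating the estimates of Theorem~\ref{thm:specifytwo}, the cases $z=n$ and $x\notin Z$ are immediate, and when $x\in Z$ one finds $|N(Y)|=\bigl(\sum_{i\in Z}a_i\bigr)-2$ and $|Y|\leq 2^z-1-\chi_{\power{Z}}(X_1)-\chi_{\power{Z}}(X_2)$. As in the original proof the Hall condition can only fail when $x\in Z$ and $X_1,X_2\nsubseteq Z$, in which case it suffices to prove $\sum_{i\in Z}a_i\geq 2^z+1$. For $z\geq 2$ I would reuse Case~\ref{item:maxlemma} of Theorem~\ref{thm:specifytwo} almost verbatim: since $x=\max X_1\in Z$ but $X_1\nsubseteq Z$ we cannot have $Z=[z]$, so writing $Z=\{i_1<\dots<i_z\}$ gives $i_z>z$ and hence $a_{i_z}\geq a_{z+1}$; the squeeze of Equation~\eqref{eq:squeeze} forces $a_{z+1}=a_z$, and Lemma~\ref{lem:a_i=a_{i+1}} (valid because $n\geq 4$ and $2\leq z\leq n-1$) then yields $\excess{z}{\mathcal{I}}\geq 2$, i.e.\ $\sum_{i\in Z}a_i\geq 2^z+1$.

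The genuinely new case, and the main obstacle, is $z=1$, which cannot arise in Theorem~\ref{thm:specifytwo} because there $x_1\neq x_2$ forces $z\geq 2$. Here $Z=\{x\}$, the only candidate is $Y=\{\{x\}\}$, and the Hall condition reads $a_x-2\geq 1$; it therefore fails precisely when $a_x=2$ and $X_1,X_2\neq\{x\}$ (so that $\{x\}\in A'$ and $X_1,X_2\nsubseteq Z$). I would then show this failure mode is exactly the stated exceptional configuration. Since $\max X_1=x$ and $X_1\neq\{x\}$, the set $X_1$ contains $x$ together with a smaller element, so $x\geq 2$; and since $\mathcal{I}$ is ordered and irreducible, $2\leq a_1\leq\dots\leq a_x=2$ forces $a_1=\dots=a_x=2$. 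The signature condition $\sum_{i=1}^x a_i\geq 2^x-1$ of Theorem~\ref{thm:characterisation} then gives $2x\geq 2^x-1$, which fails for $x\geq 3$, so $x=2$ and $a_1=a_2=2$. Finally $X_1\subseteq\{1,2\}$ together with $2\in X_1$ and $X_1\neq\{2\}$ give $X_1=\{1,2\}$, while $2\in X_2$, $X_2\neq\{2\}$ and $X_2\neq X_1$ give $X_2\nsubseteq\{1,2\}$.

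To finish I would conclude that outside this exceptional configuration the Hall condition holds for every $Y$, so $G_{\mathcal{I}}'$ has a perfect matching and the required upright tree exists. For completeness I would also note that the exceptional case genuinely fails: any upright tree $T$ with $\psi_T(X_1)=\psi_T(X_2)=2$ also has $\psi_T(\{2\})=2$, giving three distinct vertices $\{2\},X_1,X_2$ mapping to $2$ and hence $a_2\geq 3$, contradicting $a_2=2$. I expect the only delicate points to be the bookkeeping of the $z=1$ case and the extraction of the precise exceptional parameters via the signature condition; the $z\geq 2$ analysis is inherited essentially unchanged from Theorem~\ref{thm:specifytwo}.
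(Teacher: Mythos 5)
Your proposal is correct and follows essentially the same route as the paper: both observe that the hypothesis $x_1\neq x_2$ is needed only to exclude the case $z=1$, analyse the failure of the Hall condition at $Y=\{\{x\}\}$ to force $a_x=2$ and hence $x=2$, and pin down the exceptional configuration $X_1=\{1,2\}$, $X_2\nsubseteq\{1,2\}$ (the paper deduces $x\le 2$ from Lemma~\ref{lem:signature-growth} rather than from the inequality $2x\geq 2^x-1$, an immaterial difference). Your closing verification that the exceptional case genuinely fails also matches the paper's.
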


\begin{proof}
  In the proof of Case~\ref{item:maxlemma} of Theorem~\ref{thm:specifytwo}, the hypothesis $x_1\neq x_2$ is used only to rule out the possibility $z=1$ when $x_1,x_2\in Z$ but $X_1,X_2\nsubseteq Z$. We therefore check when $|Y|>|N(Y)|$ can hold under these conditions.

Since $z=|Z|=1$ and $x\in Z$ we must have $Z=\{x\}$, which in turn implies $Y=\{\{x\}\}$. From equation~\eqref{eq:n(y)} we have $|N(Y)|=a_x-2$, so if $|Y|>|N(Y)|$ we must have $a_x\leq 2$. 
Irreducibility of $\mathcal{I}$ rules out the possibility $a_x=1$, so we must have $a_x=2$, and then $x\leq 2$ by Lemma~\ref{lem:signature-growth}. We need not consider the case $x=1$: if $x=1$ then the hypothesis $x=\max X_t$ for some $t$ implies either $X_1$ or $X_2$ is equal to $\{1\}$, which means that the vertex $\{1\}$ is already matched by $M$ and does not belong to $G_{\mathcal{I}}'$. So suppose that $x=2$. Then $a_1=a_2=2$ by irreducibility, and the condition $x_t=\max X_t$ for $t=1$ or $2$ together with $X_1,X_2\nsubseteq Z$ implies that (perhaps after relabelling) we have $X_1=\{1,2\}$ and $X_2\nsubseteq\{1,2\}$, as claimed. 
We see moreover that in this case the required matching in $G_\mathcal{I}$ does not in fact exist, because the three distinct vertices $\{2\}$, $X_1=\{1,2\}$ and $X_2$ must all be matched with vertices in $B$ labelled 2, and there are only two such vertices. 
\end{proof}

\section{Structural consequences of reducibility}
\label{sec:structural}

We now turn our attention to the consequences of reducibility for arbitrary spanning trees. We give a structural characterisation of reducible trees in Section~\ref{sec:structural-characterisation}, then use this to
show in Section~\ref{sec:decompose} that a tree that reduces over 
a set of size $r$ decomposes as a sum of $2^r$ spanning trees of $Q_{n-r}$, together with a spanning tree of a certain contraction of $Q_n$ with underlying simple graph $Q_r$.
The constructions required to state this result are defined in Section~\ref{sec:partition-and-quotient}.
We then show in Section~\ref{sec:reducibleslides} that this decomposition is realised by a graph isomorphism between edge slide graphs.
We conclude the section by applying the results to several special cases in Section~\ref{sec:special}.

\subsection{Definitions and notations II}
\label{sec:partition-and-quotient}

We present some further definitions needed for our results in this section.

\subsubsection{Notation}

We will be working with the sets of all spanning trees and all signatures that reduce over a given proper non-empty subset $R$ of $[n]$. 
We therefore introduce the following notation:

\begin{definition}
Given a proper non-empty subset $R$ of $[n]$, we define
\begin{align*}
\rtree{R} &= \{T\in \tree{Q_n} :\text{$T$ reduces over $R$}\},\\
\red{R} &= \{\mathcal{S}\in\Sig(Q_n):\text{$\mathcal{S}$ reduces over $R$}\}.
\end{align*}
\end{definition}

\subsubsection{Partitioning the $n$--cube}
\label{sec:partition}

Given a subset $R\subseteq [n]$, we partition $Q_n$ into $2^{|R|}$ copies of $Q_{n-|R|}$ as follows: 

\begin{definition}
For any $X\subseteq R$ (including the empty set), let $Q_n(R, X)$ be the induced subgraph of $Q_n$ with vertices
\[
V(Q_n(R,X))=\{W\subseteq[n]: W\cap R=X\} = \{X\cup Y: Y\subseteq[n]-R\}.
\]
The cases $R=\{1,3\}$ and $R=\{1\}$ with $n=3$ are illustrated in Figure~\ref{fig:partition}. 
For any subgraph $H$ of $Q_n$ we further define $H(R,X)=H\cap Q_n(R,X)$. Thus $H(R,X)$ is the subgraph of $H$ induced by the vertices $W\in V(H)$ satisfying $W\cap R= X$. 
\end{definition}

Observe that $Q_n(R, X)=(Q_{[n]-R})\oplus X$, and so is an $(n-|R|)$--cube; and if $T$ is a spanning tree of $Q_n$, then $T(R,X)$ is a spanning forest\footnote{We use \emph{spanning forest} in the sense of a spanning subgraph that is a forest, and not in the sense of a maximal spanning forest. That is, we do not require each component of a spanning forest of $G$ to be a spanning tree of the component of $G$ it belongs to.} of $Q_n(R,X)$. Note further that 
\begin{itemize}
\item
every edge of $Q_n$ in a direction $i\notin R$ belongs to $Q_n(R, X)$ for some $X$; and 
\item
every edge of $Q_n$ in a direction $j\in R$ joins a vertex of $Q_n(R, X)$ to the corresponding vertex of $Q_n(R, X\oplus\{j\})$ for some $X$.
\end{itemize}
For any $X_1, X_2\subseteq R$ such that $X_1\neq X_2$ we have 
\[
 Q_n(R, X_1)\cap Q_n(R, X_2)=\emptyset.
 \]

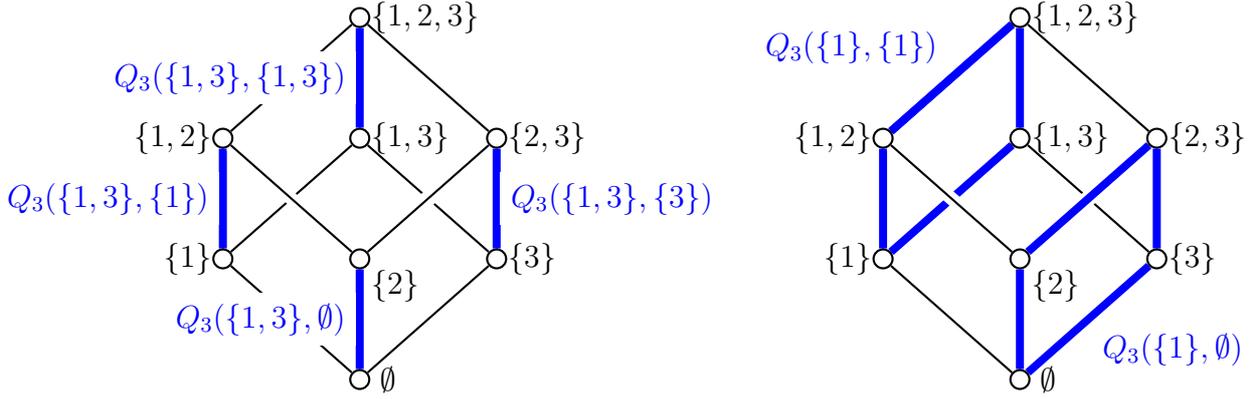
\begin{figure}
\begin{center}
\setlength{\vspacing}{1.6cm}
\setlength{\hspacing}{1.8cm}
\begin{tabular}{cc}
\begin{tikzpicture}[vertex/.style={circle,draw,inner sep=1pt,minimum size=2.5mm},thick]
\node (123) at (0,3*\vspacing) [vertex] {};
\node (12)  at (-\hspacing,2*\vspacing) [vertex]  {};
\node (13)  at (0,2*\vspacing) [vertex]  {};
\node (23)  at (\hspacing,2*\vspacing) [vertex]  {};
\node (1)  at (-\hspacing,\vspacing) [vertex]  {};
\node (2)  at (0,\vspacing) [vertex]  {};
\node (3)  at (\hspacing,\vspacing) [vertex]  {};
\node (0)  at (0,0) [vertex] {};
\foreach \x/\Y in {0/{1,2,3},1/{12,13},2/{12,23},3/{13,23},123/{12,13,23}}
   {\foreach \y in \Y
       \draw (\x) -- (\y);};
\draw [color=white,line width=4] (2)--(12);
\draw  (2)--(12);
\draw [color=white,line width=6pt] (2)--(23);
\draw  (2)--(23);
\node [right] at (0) {$\;\emptyset$};
\node [right] at (3) {$\{3\}$};
\node [right] at (23) {$\{2,3\}$};
\node [right] at (123) {$\{1,2,3\}$};
\node [left] at (12) {$\{1,2\}$};
\node [below right] at (2) {$\{2\}$};
\node [left] at (1) {$\{1\}$};
\node [right] at (13) {$\{1,3\}$};
\path (0) edge[line width=3,color=blue] node[midway,left,fill=white] {$Q_3(\{1,3\},\emptyset)$} (2);
\path (1) edge[line width=3,color=blue] node[midway,left,fill=white] {$Q_3(\{1,3\},\{1\})$} (12);
\path (3) edge[line width=3,color=blue] node[midway,right,fill=white] {$Q_3(\{1,3\},\{3\})$} (23);
\path (13) edge[line width=3,color=blue] node[midway,left,fill=white] {$Q_3(\{1,3\},\{1,3\})$} (123);
\draw [color=white,line width=5] (2)--(12);
\draw (12) -- (2);
\end{tikzpicture}& 
\begin{tikzpicture}[vertex/.style={circle,draw,inner sep=1pt,minimum size=2.5mm},thick]
\node (123) at (0,3*\vspacing) [vertex] {};
\node (12)  at (-\hspacing,2*\vspacing) [vertex]  {};
\node (13)  at (0,2*\vspacing) [vertex]  {};
\node (23)  at (\hspacing,2*\vspacing) [vertex]  {};
\node (1)  at (-\hspacing,\vspacing) [vertex]  {};
\node (2)  at (0,\vspacing) [vertex]  {};
\node (3)  at (\hspacing,\vspacing) [vertex]  {};
\node (0)  at (0,0) [vertex] {};
\foreach \x/\Y in {0/{1,2,3},1/{12,13},2/{12,23},3/{13,23},123/{12,13,23}}
   {\foreach \y in \Y
       \draw (\x) -- (\y);};
\draw  (2)--(12);
\draw [color=white,line width=6pt] (2)--(23);
\draw  (2)--(23);
\node [right] at (0) {$\;\emptyset$};
\node [right] at (3) {$\{3\}$};
\node [right] at (23) {$\{2,3\}$};
\node [right] at (123) {$\{1,2,3\}$};
\node [left] at (12) {$\{1,2\}$};
\node [below right] at (2) {$\{2\}$};
\node [left] at (1) {$\{1\}$};
\node [right] at (13) {$\{1,3\}$};
\path (12) edge[line width=3,color=blue] node[midway,above left] {$Q_3(\{1\},\{1\})$} (123);
\draw [line width=3,color=blue] (12) -- (123);
\draw [line width=3,color=blue] (13) -- (123);
\draw [line width=3,color=blue] (1) -- (13);
\draw [line width=3,color=blue] (12) -- (1);
\draw [line width=3,color=blue] (2) -- (23);
\path (0) edge[line width=3,color=blue] node[midway,below right] {$Q_3(\{1\},\emptyset)$} (3);
\draw [line width=3,color=blue] (0) -- (2);
\draw [line width=3,color=blue] (3) -- (23);
\draw [color=white,line width=5] (2)--(12);
\draw (12) -- (2);
\end{tikzpicture}
\end{tabular}
\caption{The subcubes $Q_3(R,X)$ for $X\subseteq R$ for $R=\{1,3\}$ (left) and $R=\{1\}$ (right). In each case we get $2^{|R|}$ subcubes of dimension $3-|R|$, together containing all edges of $Q_n$ in directions not belonging to $R$.}
\label{fig:partition}
\end{center}
\end{figure}

\subsubsection{Quotienting the $n$--cube}
\label{sec:quotient}

\begin{definition}
\label{def:contraction}
Let $S\subseteq[n]$. We define \contract{S}\ to be the graph obtained from $Q_n$ by contracting every edge in direction $j$, for all $j\in S$. 

In practice we will be most interested in the case where $S=\bar{R}:=[n]-R$, for some $R\subseteq[n]$. 
The contractions \contractb[3]{R}\ for $R=\{1,3\}$ and $R=\{1\}$ are illustrated in Figure~\ref{fig:quotient}. 
For $R\subseteq[n]$ the contraction \contractb{R}\ is the graph obtained from $Q_n$ by contracting every edge in direction $j$, for all $j\notin R$. 
The construction has the effect of contracting each subcube $Q_n(R,X)$ to a single vertex, which we may label $X$, for each $X\subseteq R$. The resulting graph \contractb{R}\ is a multigraph with underlying simple graph $Q_R$, and $2^{n-|R|}$ parallel edges for each edge of $Q_R$: one for each element of $\power{\bar{R}}$. We regard \contractb{R}\ as having vertex set $V(Q_R)=\power{R}$ and edge set $E(Q_R)\times\power{\bar{R}}$, where the edge $(e,Y)\in E(Q_R)\times\power{\bar{R}}$ joins the endpoints of $e$. We define 
\[
\capr{R}:\contractb{R}\to Q_R
\] 
to be the projection from \contractb{R}\ to the underlying simple graph. This map fixes all the vertices and sends $(e,Y)\in E(Q_R)\times\power{\bar{R}}$ to $e\in E(Q_R)$. 
\end{definition}

\begin{figure}
\begin{center}
\begin{tabular}{ccc}
\begin{tikzpicture}[vertex/.style={circle,draw,fill=blue,minimum size = 2mm,inner sep=0pt},thick]
\setlength{\hspacing}{2cm}
\node (0) at (0,0) [vertex,label=below:$\emptyset$] {};
\node (1) at (-\hspacing,\hspacing) [vertex,label=left:$\{1\}$] {};
\node (3) at (\hspacing,\hspacing) [vertex,label=right:$\{3\}$] {};
\node (13) at (0,2*\hspacing) [vertex,label=above:{$\{1,3\}$}] {};
\foreach \x/\y in {0/1,1/13,13/3,3/0}
   {\path (\x) edge[bend left] node [midway,fill=white] {$\emptyset$} (\y);
    \path (\x) edge[bend right] node [midway,fill=white] {$\{2\}$} (\y);}
\end{tikzpicture} &\qquad\qquad &
\begin{tikzpicture}[vertex/.style={circle,draw,fill=blue,minimum size = 2mm,inner sep=0pt},thick]
\setlength{\hspacing}{2cm}
\node (0) at (0,0) [vertex,label=below:{$\emptyset$}] {};
\node (1) at (0,2*\hspacing) [vertex,label=above:{$\{1\}$}] {};
\node (00) at (-\hspacing,\hspacing) {$\emptyset$};
\node (23) at (\hspacing,\hspacing) {$\{2,3\}$};
\path (0) edge[bend left] node [midway,fill=white] {$\{2\}$} (1);
\path (0) edge[bend right] node [midway,fill=white] {$\{3\}$} (1);
\path (0) edge[bend left,in=135] (00);
\path (00) edge[bend left,out=45] (1);
\path (0) edge[bend right,in=225] (23);
\path (23) edge[bend right,out=-45] (1);
\end{tikzpicture}
\end{tabular}
\caption{The graphs $\contractb[3]{R}$ in the cases $R=\{1,3\}$ (left) and $R=\{1\}$ (right). The graphs are formed by contracting the bold edges in the corresponding graph of Figure~\ref{fig:partition}. In each case we get a multigraph with underlying simple graph $Q_R$, and $2^{3-|R|}$ parallel edges for each edge of $Q_R$. The parallel edges may be labelled with the elements of $\power{[3]-R}$.}
\label{fig:quotient}
\end{center}
\end{figure}
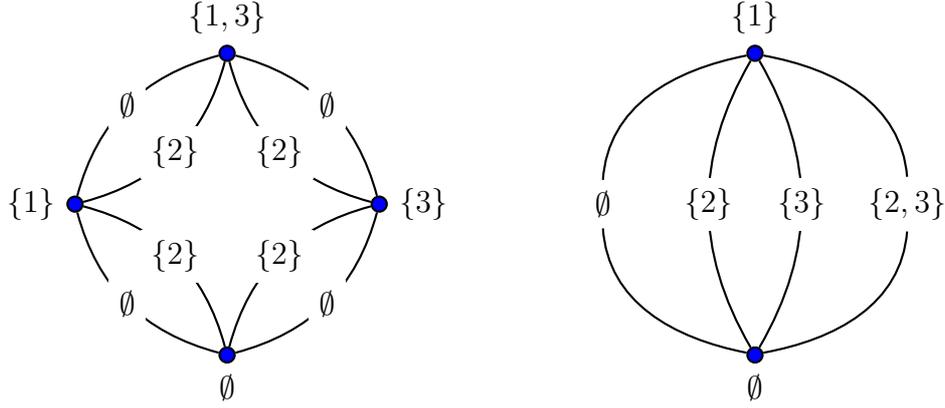

A spanning tree $T$ of \contractb{R}\ corresponds to a choice of spanning tree $T_R=\capr{R}(T)$ of the underlying simple graph $Q_R$, together with a choice of label $Y_e\in\power{\bar{R}}$ for each edge $e$ of $T_R$. We may define edge slides for spanning trees of \contractb{R}\ in an identical manner to edge slides for spanning trees of $Q_n$. For each $i\in [n]$ the automorphism $\sigma_i:Q_n\to Q_n$ descends to a well defined map $\sigma_i:\contractb{R}\to\contractb{R}$, and as before we may define the edge $(e,Y)$ of $T$ to be $i$--slidable if $T-(e,Y)+\sigma_i(e,Y)$ is again a spanning tree of $T$. For $i\in \bar{R}$ this simply corresponds to a change in label from $Y$ to $Y\oplus\{i\}$, so every edge of $T$ is $i$--slidable; while for $i\in R$ this corresponds to a label preserving edge slide in $T_R$, and $(e,Y)$ is $i$--slidable if and only if $e$ is $i$--slidable as an edge of $T_R$. 
We write $\eslidesig{\contractb{R}}$ for the edge slide graph of \contractb{R}, and
for a signature $\mathcal{S}$ of $Q_R$ we write $\eslidesig[\contractb{R}]{\mathcal{S}}$ for the edge slide graph of spanning trees of \contractb{R}\ with signature $\mathcal{S}$. Our discussion above has the following consequence:

\begin{observation}
\label{obs:contraction}
Let $R$ be a proper nonempty subset of $[n]$. For any signature $\mathcal{S}$ of $Q_R$, the edge slide graph $\eslidesig[Q_n/\bar{R}]{\mathcal{S}}$ is connected if and only if $\eslidesig{\mathcal{S}}$ is connected. 
\end{observation}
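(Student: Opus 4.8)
The plan is to exploit the dichotomy of edge slides on $\contractb{R}$ described in the paragraph preceding the statement. I would first record that a spanning tree $T$ of $\contractb{R}$ amounts to a spanning tree $T_R=\capr{R}(T)$ of $Q_R$ together with a labelling assigning an element $Y_e\in\power{\bar{R}}$ to each edge $e$ of $T_R$, and that $\sig(T)=\sig(T_R)$ because $\capr{R}$ preserves directions. The edge slides then split into two kinds: a slide in a direction $i\in\bar{R}$ fixes $T_R$ and replaces one label $Y_e$ by $Y_e\oplus\{i\}$ (with every edge always slidable in such a direction), while a slide in a direction $i\in R$ is a label-preserving slide of $T_R$, available precisely when the corresponding edge is $i$-slidable in $Q_R$. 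Writing a vertex of $\eslidesig[\contractb{R}]{\mathcal{S}}$ as a pair $(T_R,\lambda)$, the projection $T\mapsto T_R$ thus gives a map $\Pi\colon\eslidesig[\contractb{R}]{\mathcal{S}}\to\eslidesig{\mathcal{S}}$ collapsing each label-changing slide to a single vertex and sending each label-preserving slide to the corresponding edge of $\eslidesig{\mathcal{S}}$.

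Two structural facts would drive the argument. First, since $R$ is proper the set $\bar{R}$ is nonempty, and the single-element flips $Y_e\mapsto Y_e\oplus\{i\}$ for $i\in\bar{R}$ generate all of $\power{\bar{R}}$ under symmetric difference; hence for a fixed spanning tree $T_R$ the label-changing slides connect every labelling to every other. Second, every edge slide of $Q_R$ lifts: for any labelling $\lambda$ of $T_R$ and any $i$-slide ($i\in R$) carrying $T_R$ to $T_R'$, there is a label-preserving slide $(T_R,\lambda)\to(T_R',\lambda')$.

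For the ``if'' direction I would assume $\eslidesig{\mathcal{S}}$ connected and take two trees $(T_R,\lambda)$, $(T_R',\lambda')$ of $\contractb{R}$. Lifting a path of slides from $T_R$ to $T_R'$ produces a path from $(T_R,\lambda)$ to some $(T_R',\mu)$, and the two labellings $\mu,\lambda'$ of $T_R'$ are joined by label-changing slides using the first fact, giving connectivity of $\eslidesig[\contractb{R}]{\mathcal{S}}$. For the ``only if'' direction I would assume $\eslidesig[\contractb{R}]{\mathcal{S}}$ connected and take trees $T_R,T_R'$ of $Q_R$; lifting them with, say, the all-$\emptyset$ labelling, joining the lifts by a path, and applying $\Pi$ turns label-changing steps into stationary steps and label-preserving steps into edges, yielding a walk in $\eslidesig{\mathcal{S}}$ from $T_R$ to $T_R'$.

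I do not expect a serious obstacle: the statement is essentially bookkeeping on top of the edge-slide dichotomy already established. The only point needing care is the claim that label-changing slides alone connect all labellings of a fixed tree, which rests on $\bar{R}\neq\emptyset$ and is exactly where the hypothesis that $R$ be a \emph{proper} subset of $[n]$ enters.
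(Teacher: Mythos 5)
Your proof is correct and takes essentially the same approach as the paper, which presents the observation as an immediate consequence of the preceding discussion of the two kinds of edge slides in $Q_n/\bar{R}$: label-changing slides in directions $i\in\bar{R}$, which are always available and connect all labellings of a fixed $T_R$, and label-preserving slides in directions $i\in R$, which correspond exactly to slides of $T_R$ in $Q_R$. You have simply made explicit the lifting and projection bookkeeping that the paper leaves implicit.
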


By a mild abuse of notation we may also regard \capr{R}\ as a map from $Q_n$ to $Q_R$. For each vertex $U$ of $Q_n$ we have 
\[
\capr{R}(U) = U\cap R,
\]
and for each edge $\{U,V\}$ of $Q_n$  we have
\[
\capr{R}(\{U,V\}) 
    = \begin{cases}
      \{\capr{R}(U),\capr{R}(V)\}=\{U\cap R,V\cap R\} 
                              & \text{if $\capr{R}(U)\neq\capr{R}(V)$},\\
      \capr{R}(U)=U\cap R     & \text{if $\capr{R}(U)=\capr{R}(V)$}.
      \end{cases}
\]
If $V=U\oplus\{j\}$ then $\capr{R}(\{U,V\})$ is the edge $\{U\cap R,(U\cap R)\oplus\{j\}\}$ of $Q_R$ if $j\in R$, and is the vertex $U\cap R$ of $Q_R$ if $j\notin R$. Thus $\capr{R}:Q_n\to Q_R$ is not a graph homomorphism in the usual sense, but it is a cellular map if we regard $Q_n$ and $Q_R$ as $1$--dimensional cell-complexes. Note that $Q_n(R,X)$ is the preimage in $Q_n$ of $X\subseteq R$ under $\capr{R}$.

\subsubsection{The Cartesian product of graphs}

Our edge slide graph decompositions will be expressed in terms of the Cartesian product of graphs; see for example~\cite[p.~30]{bondy-murty-2008} or~\cite[D71 (p.~16)]{handbook-graphtheory2014}. The Cartesian product may be defined as follows:

'\begin{figure}
\begin{center}
\setlength{\vspacing}{1.5cm}
\setlength{\hspacing}{1.5cm}
\begin{tikzpicture}[vertex/.style={circle,draw,minimum size = 2mm,inner sep=0pt},thick]
\foreach \y in {0,1,2,3}
  {\foreach \x in {3,5}
     {\node (\x\y) at (\x*\hspacing,\y*\vspacing) [vertex] {};}
      \node (l\y) at (3.66*\hspacing,-0.25*\vspacing+\y*\vspacing) [vertex] {};
      \node (u\y) at (4.33*\hspacing,0.25*\vspacing+\y*\vspacing) [vertex] {};
      \node (1\y) at (1.25\hspacing,\y*\vspacing) [vertex] {};
      \node (2\y) at (2.25\hspacing,\y*\vspacing) [vertex] {};
      \draw [thick] (1\y) -- (2\y);}
\draw [thick] (u1) -- (u2) -- (u3);
\foreach \y in {1,2,3}
      \node (0\y) at (0,\y*\vspacing) [vertex] {};
\foreach \y in {0,1,2,3}
     {\draw [color=white,line width=5] (l\y) -- (5\y);
      \draw [thick] (3\y) -- (u\y) -- (5\y) -- (l\y) -- (3\y);}
\foreach \x in {0,1,2,3,5}
      \draw [thick] (\x1) -- (\x2) -- (\x3);
\draw [color=white,line width=5] (l1) -- (l2) -- (l3);
\draw [thick] (l1) -- (l2) -- (l3);
\node [below right] at (01) {$H$};
\node [below right] at (50) {$G$};
\node [below right] at (51) {$G\boxempty H$};
\end{tikzpicture}
\caption{The Cartesian product of the graphs $G=Q_1\amalg Q_2$ and $H=P_2$, a path of length two.
Notice that $G\boxempty H = (Q_1\amalg Q_2)\boxempty P_2 = (Q_1\boxempty P_2)\amalg (Q_2\boxempty P_2)$. }
\label{fig:cartesian}
\end{center}
\end{figure}
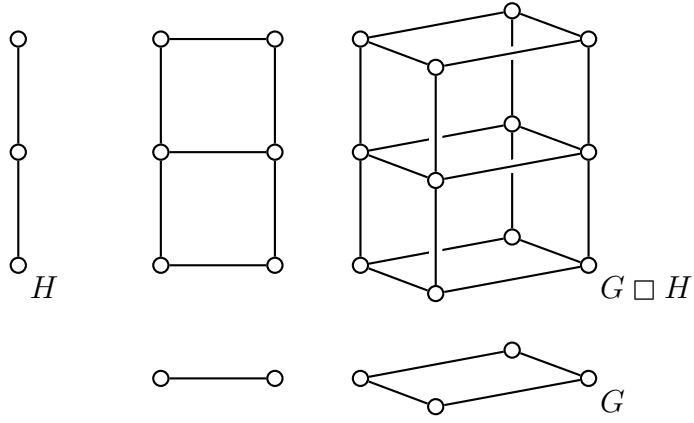

\begin{definition}
Let $G_1=(V_1,E_1)$ and $G_2=(V_2,E_2)$ be graphs, where $V_i$ is the vertex set of $G_i$ and $E_i$ is its edge set. The \textbf{Cartesian product} of $G_1$ and $G_2$, denoted $G_1\boxempty G_2$, is a graph with vertex set $V_1\times V_2$ and edge set $(E_1\times V_2)\cup(V_1\times E_2)$. The incidence relation is as follows:
\begin{itemize}
\item
If $e_1\in E_1$ is incident with $u_1,v_1$, then for all
$w_2\in V_2$, the edge $(e_1,w_2)$ is incident with the vertices $(u_1,w_2)$ and $(v_1,w_2)$ of $G_1\boxempty G_2$. 
\item
Similarly, if  $e_2\in E_2$ is incident with $u_2,v_2\in V_2$, then for all $w_1\in V_1$, the edge $(w_1,e_2)$ is incident with the vertices $(w_1,u_2)$ and $(w_1,v_2)$ of $G_1\boxempty G_2$.
\end{itemize}
Consequently, for simple graphs $G_1$ and $G_2$, the vertices $(u_1,u_2)$ and $(v_1,v_2)$ of $G_1\boxempty G_2$ are adjacent if and only if
\begin{itemize}
\item
$u_2=v_2$, and $u_1$, $v_1$ are adjacent in $G_1$; or
\item
$u_1=v_1$, and $u_2$, $v_2$ are adjacent in $G_2$.
\end{itemize}
We write
\begin{align*}
\bigbox_{i=1}^n G_i &= G_1\boxempty G_2\boxempty\cdots\boxempty G_n, \\
G^{\boxempty n} &= \bigbox_{i=1}^n G = \underbrace{G\boxempty\cdots\boxempty G}_n.
\end{align*}
\end{definition}

Figure~\ref{fig:cartesian} illustrates the Cartesian product of $G=Q_1\amalg Q_2$ and $H=P_2$, a path of length two.

Some sources write $G\times H$ for the Cartesian product of $G$ and $H$, but others use this notation for a different graph product. 
The notation $G\boxempty H$ used here and in~\cite{bondy-murty-2008} avoids this ambiguity and reflects the fact that the product of a pair of edges is a square, as can be seen in Figure~\ref{fig:cartesian}. We note that
\begin{enumerate}
\item
$Q_n \cong (K_2)^{\boxempty n}=(Q_1)^{\boxempty n}$,
and so also $Q_n\boxempty Q_m \cong Q_{n+m}$.
\item
If $G=G_1\amalg G_2$ is a disjoint union of subgraphs $G_1$ and $G_2$, then 
\[
G\boxempty H = (G_1\amalg G_2)\boxempty H = (G_1\boxempty H)\amalg (G_2\boxempty H),
\]
as can be seen in Figure~\ref{fig:cartesian}.
\end{enumerate}

\begin{remark}
If $G_1$ and $G_2$ are regarded as $1$--dimensional cell complexes, then $G_1\boxempty G_2$ is the $1$--skeleton of the $2$--dimensional cell complex $G_1\times G_2$:
\[
G_1\boxempty G_2=(G_1\times G_2)^{(1)}.
\] 
Here $G_1\times G_2$ is the Cartesian product of $G_1$ and $G_2$ as cell complexes; see for example Hatcher~\cite[p.~8]{hatcher-at} . 
\end{remark}

\subsection{Structural characterisation of reducible trees}
\label{sec:structural-characterisation}

Reducible trees may be characterised as follows:

\begin{theorem}
\label{thm:reducible-subtrees}
Let $T$ be a spanning tree of $Q_n$, and let $R$ be a proper nonempty subset of $[n]$.  The following statements are equivalent:
\begin{enumerate}
\item
\label{item:R-is-reducing}
$T$ reduces over $R$. 
\item
\label{item:subtrees}
$T(R,X)$ is a spanning tree of $Q_n(R, X)$ for every $X\subseteq R$.
\item
\label{item:contraction}
$T/\bar{R}$ is a spanning tree of \contractb{R}.
\end{enumerate}
\end{theorem}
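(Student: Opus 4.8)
The plan is to set $r=|R|$ and write $a_R=\sum_{i\in R}a_i$ and $a_{\bar{R}}=\sum_{i\in\bar{R}}a_i$ for the number of edges of $T$ in directions inside and outside $R$, so that $a_R+a_{\bar{R}}=|E(T)|=2^n-1$. The key observation is that statement~\eqref{item:R-is-reducing} is exactly the arithmetic condition $a_R=2^r-1$. I would then prove the two equivalences \eqref{item:R-is-reducing}$\Leftrightarrow$\eqref{item:subtrees} and \eqref{item:R-is-reducing}$\Leftrightarrow$\eqref{item:contraction} separately, in each case reducing the stated structural condition to an edge count governed by $a_R$. Both directions turn out to rest on the single inequality $a_R\geq 2^r-1$ together with a clean characterisation of its equality case.

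For \eqref{item:R-is-reducing}$\Leftrightarrow$\eqref{item:subtrees}, I would use the fact (already noted after the partition definition) that each $T(R,X)$ is a spanning forest of the $(n-r)$--cube $Q_n(R,X)$, which has $2^{n-r}$ vertices; hence $|E(T(R,X))|\leq 2^{n-r}-1$, with equality if and only if $T(R,X)$ is a spanning tree. Since the $\bar{R}$--direction edges of $T$ are precisely those lying inside the subcubes $Q_n(R,X)$, summing over the $2^r$ subcubes gives $a_{\bar{R}}=\sum_{X\subseteq R}|E(T(R,X))|\leq 2^r(2^{n-r}-1)=2^n-2^r$, so $a_R\geq 2^r-1$. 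The equality $a_R=2^r-1$ forces every summand to attain its maximum, i.e.\ every $T(R,X)$ to be a spanning tree, and conversely; this is exactly \eqref{item:R-is-reducing}$\Leftrightarrow$\eqref{item:subtrees}.

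For \eqref{item:R-is-reducing}$\Leftrightarrow$\eqref{item:contraction}, I would first pin down $T/\bar{R}$ as a spanning subgraph of $\contractb{R}$. Contracting the $\bar{R}$--edges of $Q_n$ collapses each $Q_n(R,X)$ to the vertex $X$ and restricts to a bijection between the $R$--direction edges of $Q_n$ and the edges $(e,Y)\in E(Q_R)\times\power{\bar{R}}$ of $\contractb{R}$; hence $T/\bar{R}$ has exactly $a_R$ edges, one for each $R$--direction edge of $T$. Moreover $T/\bar{R}$ is connected and spanning, being the image of the connected spanning tree $T$ under the contraction (every $X\subseteq R$ is the image of some vertex of $T$, and any path in $T$ projects to a walk in $T/\bar{R}$). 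A connected spanning subgraph of $\contractb{R}$ on $2^r$ vertices is a spanning tree if and only if it has $2^r-1$ edges, i.e.\ if and only if $a_R=2^r-1$, giving \eqref{item:R-is-reducing}$\Leftrightarrow$\eqref{item:contraction}.

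The step I expect to demand the most care is the contraction bookkeeping in the last paragraph: verifying that the $R$--direction edges of $T$ map injectively onto distinct parallel edges $(e,Y)$ of $\contractb{R}$ (so that none are lost, merged, or sent to a loop, and the count $a_R$ is exact), and that the image really is connected and spanning rather than merely a subgraph. Everything else is routine counting once the identity $a_{\bar{R}}=\sum_{X\subseteq R}|E(T(R,X))|$ and the inequality $a_R\geq 2^r-1$ with its equality case are in hand.
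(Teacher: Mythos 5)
Your proposal is correct and follows essentially the same route as the paper: both equivalences come down to counting the $R$--direction edges of $T$ against $2^{|R|}-1$, using the identification of the $\bar R$--direction edges with the edges inside the subcubes $Q_n(R,X)$ and of the $R$--direction edges with the edges of $\contractb{R}$. The only cosmetic difference is in \eqref{item:R-is-reducing}$\Leftrightarrow$\eqref{item:subtrees}, where the paper counts components of the forest $T-E$ against the $2^{|R|}$ components of $\bigcup_X Q_n(R,X)$ while you count edges per subcube and sum; these are the same fact about forests viewed two ways.
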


Figure~\ref{fig:decomposition} illustrates Theorem~\ref{thm:reducible-subtrees}
for a tree with signature $(1,3,3)$, which reduces over $R=\{1\}$.

\begin{proof}
Let $\sig(T)=\mathcal{S}=(a_1,\dots,a_n)$, and let $E$ be the set of edges of $T$ in directions belonging to $R$. Then $|E|=\sum_{i\in R} a_i$. 
Delete all edges of $T$ belonging to $E$. The resulting graph $T-E=\bigcup_{X\subseteq R} T(R,X)$ has $|E|+1$ components and is a spanning forest\footnote{Recall that we do not require a spanning forest to be a \emph{maximal} spanning forest.} of $G=\bigcup_{X\subseteq R} Q_n(R,X)$, which is the result of deleting all edges of $Q_n$ in directions belonging to $R$. As such, $T(R,X)$ is a spanning tree of $Q_n(R,X)$ for all $X\subseteq R$ if and only if $T-E$ has the same number of components as $G$. But $G$ has $2^{|R|}$ components, so condition~\ref{item:subtrees} holds if and only if
\[
|E|=\sum_{i\in R} a_i = 2^{|R|}-1;
\]
that is, if and only if $\mathcal{S}$ reduces over $R$. 
This proves that condition~\ref{item:R-is-reducing} of the theorem holds if and only if condition~\ref{item:subtrees} does.

We now consider $T/\bar{R}$. This graph is the subgraph of \contractb{R}\ that results from $T$ under the edge contractions transforming $Q_n$ into $\contractb{R}$. Since $T$ is a connected spanning subgraph of $Q_n$, the resultant $T/\bar{R}$ is a connected spanning subgraph of \contractb{R} also. It is therefore a spanning tree if and only if it has $2^{|R|}-1$ edges. But the edges of \contractb{R}\ are exactly the edges of $Q_n$ in directions belonging to $R$, and so the edges of $T/\bar{R}$ are exactly the edges of $T$ in directions belonging to $R$ also. Thus $T/\bar{R}$ has $|E|=\sum_{i\in R}a_i$ edges, and so is a spanning tree if and only if $\sum_{i\in R}a_i=2^{|R|}-1$. This shows that condition~\ref{item:R-is-reducing} holds if and only if condition~\ref{item:contraction} does, completing the proof.
\end{proof}

\begin{corollary}
\label{cor:TcapR}
Let $T$ be a spanning tree of $Q_n$, and let $R$ be a proper nonempty subset of $[n]$.
If $T$ reduces over $R$ then $\capr{R}(T)$ is a spanning tree of $Q_R$. 
\end{corollary}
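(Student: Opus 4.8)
The plan is to deduce this from the equivalence of conditions~\ref{item:R-is-reducing} and~\ref{item:contraction} in Theorem~\ref{thm:reducible-subtrees}, together with the observation that the cellular map $\capr{R}:Q_n\to Q_R$ factors through the contraction \contractb{R}. First, since $T$ reduces over $R$, Theorem~\ref{thm:reducible-subtrees} tells us that $T/\bar{R}$ is a spanning tree of the multigraph \contractb{R}. Next I would observe that $\capr{R}(T)$, regarded as the image of $T$ under $\capr{R}:Q_n\to Q_R$, coincides with the image of $T/\bar{R}$ under the projection $\capr{R}:\contractb{R}\to Q_R$ to the underlying simple graph. Indeed, both maps collapse every edge of $T$ in a direction belonging to $\bar{R}$ and send each edge in a direction $j\in R$ to the corresponding edge of $Q_R$; so the edge set of $\capr{R}(T)$ is precisely $\{\capr{R}(e,Y):(e,Y)\in E(T/\bar{R})\}$.

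The key step is then to count the edges of $\capr{R}(T)$. Because $T/\bar{R}$ is a tree, it contains no two parallel edges: a pair of parallel edges $(e,Y)$ and $(e,Y')$ joining the same two vertices of \contractb{R}\ would constitute a cycle of length two. Hence distinct edges of $T/\bar{R}$ have distinct images under $\capr{R}$, so $\capr{R}$ is injective on $E(T/\bar{R})$ and $\capr{R}(T)$ has exactly $|E(T/\bar{R})|=2^{|R|}-1$ edges. Finally, since $\capr{R}$ fixes every vertex, $\capr{R}(T)$ is a spanning subgraph of $Q_R$, and it is connected because $T/\bar{R}$ is connected and $\capr{R}$ is surjective on vertices. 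A connected spanning subgraph of $Q_R$ having $2^{|R|}-1$ edges on the $2^{|R|}$ vertices of $Q_R$ is necessarily a spanning tree, which completes the argument.

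The only real subtlety is the injectivity of the edge map, that is, ruling out the possibility that two distinct edges of $T$ in directions belonging to $R$ collapse onto the same edge of $Q_R$. Passing to the contraction $T/\bar{R}$ makes this immediate, since a tree in a multigraph can use at most one edge from each parallel class. One could instead try to verify this injectivity directly in $Q_n$ by analysing which subcubes $Q_n(R,X)$ the two edges bridge, but that route is more delicate; routing the argument through $T/\bar{R}$ and Theorem~\ref{thm:reducible-subtrees} avoids it entirely.
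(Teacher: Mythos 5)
Your proof is correct and follows essentially the same route as the paper's: both deduce from Theorem~\ref{thm:reducible-subtrees} that $T/\bar{R}$ is a spanning tree of $\contractb{R}$, note that $\capr{R}(T)$ is its image under the projection to the underlying simple graph, and use the fact that an acyclic subgraph contains at most one edge from each parallel class to count $2^{|R|}-1$ edges and conclude. The paper's argument is just a more compressed version of yours.
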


\begin{proof}
Recall that $\capr{R}:Q_n\to Q_R$ is the map given by the contraction $Q_n\to \contractb{R}$, followed by the projection $\contractb{R}\to Q_R$ to the underlying simple graph.
Thus, the graph $\capr{R}(T)$ is the subgraph of $Q_R$ obtained from $T/\bar{R}$ under the projection $\contractb{R}\to Q_R$. By Theorem~\ref{thm:reducible-subtrees} $T/\bar{R}$ is a spanning tree of \contractb{R}, so $\capr{R}(T)$ is a connected spanning subgraph of $Q_R$. Moreover, since it is acyclic, $T/\bar{R}$ contains at most one edge from each family of parallel edges of \contractb{R}. Therefore the number of edges of $\capr{R}(T)$ is equal to the number of edges $T/\bar{R}$, namely $2^{|R|}-1$. The result follows.
\end{proof}

\begin{figure}
\begin{center}
\setlength{\vspacing}{1.6cm}
\setlength{\hspacing}{1.8cm}
\begin{tabular}{ccc}
\begin{tikzpicture}[vertex/.style={circle,draw,inner sep=1pt,minimum size=2mm},thick]
\node (123) at (0,3*\vspacing) [vertex] {};
\node (12)  at (-\hspacing,2*\vspacing) [vertex]  {};
\node (13)  at (0,2*\vspacing) [vertex]  {};
\node (23)  at (\hspacing,2*\vspacing) [vertex]  {};
\node (1)  at (-\hspacing,\vspacing) [vertex]  {};
\node (2)  at (0,\vspacing) [vertex]  {};
\node (3)  at (\hspacing,\vspacing) [vertex]  {};
\node (0)  at (0,0) [vertex] {};
\foreach \x/\Y in {0/{1,2,3},1/{12,13},2/{12,23},3/{13,23},123/{12,13,23}}
   {\foreach \y in \Y
       \draw (\x) -- (\y);};
\draw [color=white,line width=4] (2)--(12);
\draw  (2)--(12);
\draw [color=white,line width=6pt] (2)--(23);
\draw  (2)--(23);
\node [right] at (0) {$\;\emptyset$};
\node [right] at (3) {$\{3\}$};
\node [right] at (23) {$\{2,3\}$};
\node [right] at (123) {$\{1,2,3\}$};
\node [left] at (12) {$\{1,2\}$};
\node [below right] at (2) {$\{2\}$};
\node [left] at (1) {$\{1\}$};
\node [right] at (13) {$\{1,3\}$};
\draw [line width=3,color=blue] (12) -- (123);
\path (12) edge[line width=3,color=blue] node[midway,above left] {$T(\{1\},\{1\})$} (123);
\draw [line width=3,color=blue] (1) -- (13);
\path (0) edge[line width=3,color=blue] node[midway,below right] {$T(\{1\},\emptyset)$} (3);
\draw [line width=3,color=blue] (3) -- (23);
\draw [line width=3,color=blue] (13) -- (123);
\draw [color=white,line width=5] (2)--(12);
\draw [line width=3,color=red,dashed] (12) -- (2);
\draw [line width=3,color=blue] (2) -- (0);
\end{tikzpicture}& \hspace{\hspacing} &
\begin{tikzpicture}[vertex/.style={circle,draw,color=black,fill=blue,minimum size = 2mm,inner sep=0pt},thick]
\setlength{\hspacing}{2cm}
\node (0) at (0,0) [vertex,label=below:{$\emptyset$}] {};
\node (1) at (0,2*\hspacing) [vertex,label=above:{$\{1\}$}] {};
\node (00) at (-\hspacing,\hspacing) {$\emptyset$};
\node (23) at (\hspacing,\hspacing) {$\{2,3\}$};
\path (0) edge[line width=3,bend left,color=red,dashed] node [midway,fill=white] {$\{2\}$} (1);
\path (0) edge[bend right] node [midway,fill=white] {$\{3\}$} (1);
\path (0) edge[bend left,in=135] (00);
\path (00) edge[bend left,out=45] (1);
\path (0) edge[bend right,in=225] (23);
\path (23) edge[bend right,out=-45] (1);
\end{tikzpicture}
\end{tabular}
\caption{\emph{Left:} A spanning tree $T$ of $Q_3$ with signature $(1,3,3)$, which reduces over $R=\{1\}$. The solid bold edges show the spanning trees $T(R,X)$ of $Q_3(R,X)$ for $X\subseteq R$, and the bold dashed edge is the (here, unique) edge of $T$ in a direction belonging to $R$. \emph{Right:} The result of contracting the subcubes $Q_3(R,X)$ for $X\subseteq R$. The bold dashed edge $\bigl\{\{2\},\{1,2\}\bigr\}$ of $T$ becomes the bold dashed spanning tree $T/\bar{R}$ of \contractb[3]{R}.
The tree $T$ can be completely reconstructed from the spanning trees $T(R,X)$, together with the spanning tree $T/\bar{R}$.}
\label{fig:decomposition}
\end{center}
\end{figure}
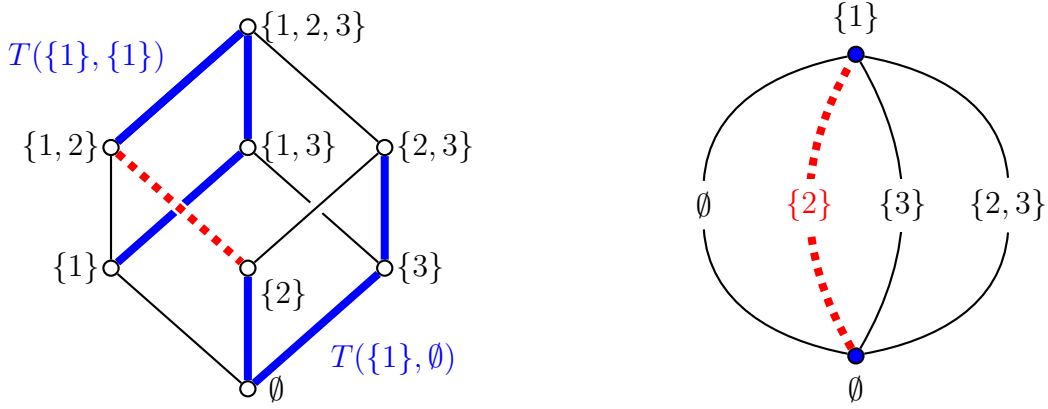

\subsection{Decomposing reducible trees}
\label{sec:decompose}

In view of Theorem~\ref{thm:reducible-subtrees}, we may canonically define a map
\[
\Psi_R : \rtree{R}\to\tree{\contractb{R}}\times\prod_{X\subseteq R}\tree{Q_n(R,X)}
\]
by setting 
\[
\Psi_R(T) = \bigl(T/\bar{R},\bigl(T(R,X)\bigr)_{X\subseteq R}\bigr).
\]
We show below in Theorem~\ref{thm:rtrees-bijection} that this map is a bijection, and then in Theorem~\ref{thm:redR} that it in fact defines an isomorphism of edge slide graphs. 

\begin{theorem}
\label{thm:rtrees-bijection}
Let $R$ be a proper non-empty subset of $[n]$. The map
\[
\Psi_R : \rtree{R}\to\tree{\contractb{R}}\times\prod_{X\subseteq R}\tree{Q_n(R,X)}
\]
defined by 
\[
\Psi_R(T) = \bigl(T/\bar{R},\bigl(T(R,X)\bigr)_{X\subseteq R}\bigr)
\]
is a bijection. 
\end{theorem}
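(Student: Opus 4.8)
The plan is to exhibit an explicit two--sided inverse for $\Psi_R$, which is cleaner than arguing injectivity and surjectivity separately (though it subsumes both). Well-definedness of $\Psi_R$ — that each $T(R,X)$ is a spanning tree of $Q_n(R,X)$ and that $T/\bar{R}$ is a spanning tree of \contractb{R} — is already supplied by Theorem~\ref{thm:reducible-subtrees}, so no checking is needed there. The key structural fact, already implicit in the proof of that theorem, is that the edge set of any $T\in\rtree{R}$ partitions cleanly into two parts: the edges in directions belonging to $\bar{R}$, each of which lies in exactly one subcube $Q_n(R,X)$ and hence contributes to the corresponding $T(R,X)$; and the edges in directions belonging to $R$, which survive the contraction $Q_n\to\contractb{R}$ and are precisely the edges of $T/\bar{R}$. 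Moreover the edges of \contractb{R} \emph{are} the $R$--direction edges of $Q_n$, via the bijection sending the direction-$j$ edge $\{X_1\cup Y,\,X_2\cup Y\}$ of $Q_n$ (with $j\in R$, $X_1,X_2\subseteq R$, $Y\subseteq\bar{R}$) to the edge $(\{X_1,X_2\},Y)$ of \contractb{R}. I would record this bijection explicitly at the outset, as it is what allows the $R$--direction edges of a reconstructed tree to be read off from a spanning tree of \contractb{R}.

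With this in hand I would define the candidate inverse $\Phi_R$ as follows: given a spanning tree $S$ of \contractb{R} together with a spanning tree $F_X$ of $Q_n(R,X)$ for each $X\subseteq R$, let $T'$ be the subgraph of $Q_n$ whose edges are all the edges of the $F_X$, together with the $R$--direction edge of $Q_n$ corresponding to each edge of $S$ under the bijection above. The bulk of the work is then to check that $T'$ is a spanning tree that reduces over $R$. An edge count is immediate: the subtrees $F_X$ contribute $2^{|R|}(2^{n-|R|}-1)=2^n-2^{|R|}$ edges and $S$ contributes $2^{|R|}-1$, for a total of $2^n-1$. Since $T'$ then has exactly $2^{|R|}-1=\sum_{i\in R}a_i$ edges in directions belonging to $R$, it reduces over $R$ as soon as it is known to be a tree.

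I expect the connectivity of $T'$ to be the main obstacle, and I would establish it by contraction. Each $F_X$ is connected and spans $Q_n(R,X)$, and the subcubes $Q_n(R,X)$ partition the vertex set of $Q_n$; so contracting every $F_X$ to a point identifies the vertices of $T'$ with the $2^{|R|}$ vertices of \contractb{R}. Every $R$--direction edge joins two distinct subcubes, so under this contraction the $R$--direction edges of $T'$ project to exactly the edges of $S$. Since $S$ is a spanning tree of \contractb{R} it is connected, hence the contracted graph is connected, and therefore $T'$ is connected. A connected graph on $2^n$ vertices with $2^n-1$ edges is a tree, so $T'$ is a spanning tree of $Q_n$, and $\Phi_R$ indeed maps into $\rtree{R}$.

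Finally I would verify that $\Phi_R$ and $\Psi_R$ are mutually inverse, which is a matter of unwinding the definitions. For $\Phi_R\circ\Psi_R=\mathrm{id}$, note that $\Psi_R(T)$ records the $\bar{R}$--direction edges of $T$ as the family $\bigl(T(R,X)\bigr)_{X\subseteq R}$ and the $R$--direction edges as $T/\bar{R}$, so reassembling via $\Phi_R$ returns exactly the edge set of $T$. For $\Psi_R\circ\Phi_R=\mathrm{id}$, the construction of $T'$ places the edges of $F_X$ as precisely the $\bar{R}$--direction edges of $T'$ lying in $Q_n(R,X)$, whence $T'(R,X)=F_X$; and contracting the $\bar{R}$--directions of $T'$ leaves its $R$--direction edges, which by construction project to $S$, whence $T'/\bar{R}=S$. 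This establishes that $\Psi_R$ is a bijection.
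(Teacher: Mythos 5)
Your proof is correct and follows essentially the same route as the paper's: both rely on Theorem~\ref{thm:reducible-subtrees} for well-definedness, identify the edges of $Q_n$ with those of $\contractb{R}$ together with the subcubes $Q_n(R,X)$, and establish surjectivity by reassembling a candidate tree, counting its $2^n-1$ edges, and deducing connectivity from the fact that the $F_X$ span the subcubes while $S$ connects their contractions. Packaging the argument as an explicit two-sided inverse $\Phi_R$ rather than separate injectivity and surjectivity checks is only a cosmetic difference.
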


\begin{proof}
The edges of $Q_n$ may be naturally identified with the edges of $(\contractb{R})\cup\bigcup_{X\subseteq R}Q_n(R,X)$. Using this identification we see that if $\Psi_R(T_1)=\Psi_R(T_2)$ then the edge set of $T_1$ is equal to the edge set of $T_2$, so $T_1=T_2$. Therefore $\Psi_R$ is one-to-one. 

It remains to show that $\Psi_R$ is onto. Let 
\[
\mathcal{T}=\bigl(T_R,\bigl(T^X\bigr)_{X\subseteq R}\bigr)
\in\tree{\contractb{R}}\times\prod_{X\subseteq R}\tree{Q_n(R,X)}. 
\]
The edges of \contractb{R}\ may be canonically identified with the edges of $Q_n$ in directions belonging to $R$, and using this identification we define $T$ be the subgraph of $Q_n$ with edge set
\[
E(T) = E(T_R)\cup\bigcup_{X\subseteq R} E(T^X).
\]
We claim that $T$ is a spanning tree of $Q_n$ that reduces over $R$, and that $\Psi_R(T)=\mathcal{T}$. 

To see this, first note that the subcubes $Q_n(R,X)$ partition the edges of $Q_n$ in directions belonging to $\bar{R}$. Thus
\begin{align*}
|E(T)| &= |E(T_R)| + \sum_{X\subseteq R} |E(T^X)| \\
       &=(2^{|R|}-1)+2^{|R|}(2^{n-|R|}-1)  \\
       &=2^n-1.
\end{align*} 
Next, recall that \contractb{R}\ is obtained from $Q_n$ by contracting each subcube $Q_n(R,X)$ to a single vertex. Since $T^X$ is a spanning tree of $Q_n(R,X)$ for each $X$, and $T_R$ is a spanning tree of \contractb{R}, it follows that $T$ is a spanning subgraph of $Q_n$. Since it has $2^n-1$ edges it is therefore a spanning tree. 
Moreover $T$ has $|E(T_R)|=2^{|R|}-1$ edges in directions belonging to $R$, so $T$ reduces over $R$. Thus $T\in\rtree{R}$, and it's clear by construction that we have $\Psi_R(T)=\mathcal{T}$. 
\end{proof}

\begin{observation}
\label{obs:productsignature}
Let $\mathcal{S}$ be a signature belonging to $\red{R}$, and let $T\in\rtree{R}$ be such that $\Psi_R(T)=\mathcal{T}=(T_R,(T^X)_{X\subseteq R})$. Then 
\[
\sig(T)=\mathcal{S} \qquad\Leftrightarrow\qquad
\sig(T_R)=\mathcal{S}|_R\text{ and }\sum_{X\subseteq R}\sig(T^X)=\mathcal{S}|_{[n]-R}.
\] 
\end{observation}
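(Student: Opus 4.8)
The plan is to observe that the bijection $\Psi_R$ merely partitions the edges of $T$ according to their direction, and then to track the resulting edge counts one direction at a time. By the partition of $Q_n$ set out in Section~\ref{sec:partition} and reprised in the proof of Theorem~\ref{thm:rtrees-bijection}, the edges of $T$ in directions belonging to $R$ are exactly the edges of $T_R=T/\bar{R}$, while the edges of $T$ in directions belonging to $\bar{R}=[n]-R$ are partitioned among the subcubes $Q_n(R,X)$, and hence among the subtrees $T^X=T(R,X)$. The crucial point is that this partition respects directions: an edge in a direction $i\in R$ contributes only to $T_R$, and an edge in a direction $i\in\bar{R}$ contributes to exactly one $T^X$.

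First I would fix a direction $i\in R$ and note that the $i$th entry of $\sig(T)$ counts the edges of $T$ in direction $i$, which are precisely the edges of $T_R$ in direction $i$; summing over $i\in R$ this gives the identity $\sig(T)|_R=\sig(T_R)$. Next I would fix $i\in\bar{R}$ and observe that the $i$th entry of $\sig(T)$ counts the edges of $T$ in direction $i$, each of which lies in a unique $T^X$; adding these contributions over all $X\subseteq R$ yields the identity $\sig(T)|_{[n]-R}=\sum_{X\subseteq R}\sig(T^X)$. Both identities hold for every $T\in\rtree{R}$, independently of the particular signature $\mathcal{S}$.

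Finally, since the tuple $\sig(T)$ is completely determined by its restrictions to $R$ and to $\bar{R}$, the equality $\sig(T)=\mathcal{S}$ holds if and only if both of these restrictions agree with the corresponding restrictions of $\mathcal{S}$. Substituting the two identities just established, this is equivalent to $\sig(T_R)=\mathcal{S}|_R$ together with $\sum_{X\subseteq R}\sig(T^X)=\mathcal{S}|_{[n]-R}$, which is the claimed equivalence.

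I do not expect any genuine obstacle here: the content of the observation is entirely bookkeeping once $\Psi_R$ is in hand. The only care required is to keep straight that directions in $R$ are counted by the single tree $T_R$, whereas directions in $\bar{R}$ are counted collectively across the family $(T^X)_{X\subseteq R}$; the substantive work of showing that $\Psi_R$ is a well-defined bijection respecting this edge partition has already been carried out in Theorem~\ref{thm:rtrees-bijection}.
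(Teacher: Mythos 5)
Your proof is correct and follows exactly the reasoning the paper leaves implicit: the paper states this as an Observation with no written proof, relying on precisely the direction-by-direction edge bookkeeping you spell out (edges in directions in $R$ are the edges of $T_R$, edges in directions in $[n]-R$ are partitioned among the $T^X$). Nothing is missing.
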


\begin{example} 
For the tree $T$ of Figure~\ref{fig:decomposition}, with  $\mathcal{S}=\sig(T)=(1,3,3)$ we have $\sig(T_{\{1\}})=(1)=\mathcal{S}|_{\{1\}}$, and
\[
\sig(T(\{1\},\emptyset))+\sig(T(\{1\},\{1\}))=(2,1)+(1,2)=(3,3)=\mathcal{S}|_{\{2,3\}}. 
\]

\end{example}

\subsection{The edge slide graph isomorphism theorem for reducible trees}
\label{sec:reducibleslides}

The bijection $\Psi_R$ of Theorem~\ref{thm:rtrees-bijection} has domain \rtree{R}, which is the vertex set of the edge slide graph \eslidesig{\red{R}}. In this section we show that $\Psi_R$ in fact defines a graph isomorphism between \eslidesig{\red{R}}\ and a suitable Cartesian product of smaller edge slide graphs:

\begin{theorem}
\label{thm:redR}
Let $R$ be a proper nonempty subset of $[n]$, and let $r=|R|$. Then 
\begin{align*}
\eslidesig{\red{R}}=\bigcup_{\mathcal{S}\in\red{R}} \eslidesig{\mathcal{S}} 
   &\cong \eslidesig{\contractb{R}} \boxempty \bigbox_{X\subseteq R} \eslidesig{Q_n(R,X)} \\
   & \cong \eslidesig{\contractb{R}} \boxempty (\eslide{n-r})^{\boxempty 2^{r}}. \\
\end{align*}
\end{theorem}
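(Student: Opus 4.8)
The plan is to prove Theorem~\ref{thm:redR} by upgrading the bijection $\Psi_R$ of Theorem~\ref{thm:rtrees-bijection} to a graph isomorphism, and then identifying the target product of edge slide graphs. The first step is to verify that $\Psi_R$ is a graph isomorphism between $\eslidesig{\red{R}}$ and the Cartesian product $\eslidesig{\contractb{R}} \boxempty \bigbox_{X\subseteq R} \eslidesig{Q_n(R,X)}$. Since $\Psi_R$ is already a bijection on vertices, it suffices to show that two trees $T_1,T_2\in\rtree{R}$ are related by an edge slide in $Q_n$ if and only if their images $\Psi_R(T_1),\Psi_R(T_2)$ differ in exactly one coordinate, and in that coordinate are related by an edge slide in the corresponding factor graph. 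This is exactly the adjacency condition in a Cartesian product.

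The key observation making this work is that an edge slide in direction $i$ acts ``coordinatewise'' with respect to the decomposition $E(T)=E(T/\bar R)\sqcup\bigsqcup_{X\subseteq R}E(T(R,X))$. Concretely, I would argue as follows. An edge slide moves an edge $e$ (of some direction $j\neq i$) to $\sigma_i(e)$. There are two cases depending on whether $i\in R$ or $i\in\bar R$, and in each case one checks the slide stays within a single factor.

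First I would treat the case $i\in\bar R$. An edge of $T$ in a direction belonging to $R$ is an edge of $T/\bar R$, and sliding it in direction $i\in\bar R$ keeps it in direction $j\in R$ but changes the label coordinate $Y\in\power{\bar R}$ to $Y\oplus\{i\}$; this is precisely an $i$--slide of $T/\bar R$ as defined in Section~\ref{sec:quotient}. An edge of $T$ in a direction $j\in\bar R$ lives in some subcube $Q_n(R,X)$, and sliding it in direction $i\in\bar R$ keeps it within $Q_n(R,X)$ (since $\capr{R}$ is unchanged: both $e$ and $\sigma_i(e)$ have $\capr{R}$-image in the same fibre). Either way exactly one factor changes by an $i$--slide and the rest are fixed. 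Second, for $i\in R$, an edge being slid must have direction $j\neq i$ with $j\in R$ (an edge in direction $j\in\bar R$ cannot be $i$--slid while $T$ stays reducible, since it would create an edge in direction $i\in R$ at a vertex already saturated by Lemma~\ref{lem:uprightreducible}-type counting — or more directly, one checks via Lemma~\ref{lem:slidable} that the resulting tree has too many $R$-edges and fails to reduce over $R$). Thus the slidable edge lies in $T/\bar R$, and the slide is a label-preserving $i$--slide of $T/\bar R$; all subcube factors are unchanged. The converse direction — that an edge slide in any single factor lifts to an edge slide of $T$ in $Q_n$ — follows by reversing these identifications, using Lemma~\ref{lem:slidable} to confirm that the cycle broken in the factor corresponds to the cycle broken in $Q_n$.

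The main obstacle I anticipate is the careful case analysis confirming that no edge slide in $Q_n$ can ever mix two factors or leave $\rtree{R}$: one must rule out slides that would convert a $\bar R$-direction edge into an $R$-direction edge (or vice versa) in a way that changes the number of $R$-edges, since such a move would violate reducibility and hence not correspond to a product edge. This is controlled by the fact that edge slides preserve the signature, so the count $\sum_{i\in R}a_i=2^{|R|}-1$ is preserved and $T_2$ remains in $\rtree{R}$; combined with the direction-preserving nature of slides (a slide in direction $i$ of an edge in direction $j$ yields an edge still in direction $j$), this guarantees each factor's edge count is individually preserved, so the slide is confined to one factor. Once the isomorphism with $\eslidesig{\contractb{R}} \boxempty \bigbox_{X\subseteq R} \eslidesig{Q_n(R,X)}$ is established, the second isomorphism is immediate: each $Q_n(R,X)$ is an $(n-r)$--cube (as noted after the definition of $Q_n(R,X)$ in Section~\ref{sec:partition}), so $\eslidesig{Q_n(R,X)}\cong\eslide{n-r}$, and there are $2^{r}$ subsets $X\subseteq R$, giving the factor $(\eslide{n-r})^{\boxempty 2^{r}}$.
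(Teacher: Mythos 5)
Your overall plan is the paper's: upgrade the bijection $\Psi_R$ of Theorem~\ref{thm:rtrees-bijection} to a graph isomorphism by checking that an edge slide in $Q_n$ acts on exactly one factor of the decomposition, via a case analysis on whether the slide direction $i$ and the edge direction $j$ lie in $R$ (the paper does this in Theorem~\ref{thm:reducibleslides}). However, two of your justifications do not stand as written. First, to rule out an $i$--slide of an edge $e$ in direction $j\in\bar R$ when $i\in R$, you offer that the slide ``would create an edge in direction $i\in R$'' or that ``the resulting tree has too many $R$-edges and fails to reduce over $R$.'' Neither is true: the slide replaces $e$ by $\sigma_i(e)$, which is still in direction $j$, so the signature --- and hence the count of $R$--direction edges --- is unchanged and the resulting tree still reduces over $R$, exactly as you yourself observe in your final paragraph. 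The genuine obstruction is that $\sigma_i(e)$ lies in the \emph{different} subcube $Q_n(R,X\oplus\{i\})$, so the slide would leave $T'(R,X)$ with only $2^{n-r}-2$ edges, contradicting Theorem~\ref{thm:reducible-subtrees} applied to $T'$; equivalently (the paper's route), the cycle created by adding $\sigma_i(e)$ lies entirely inside the spanning tree of $Q_n(R,X\oplus\{i\})$ and so cannot contain $e$, whence Lemma~\ref{lem:slidable} applies.

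Second, the one-sentence treatment of the ``converse direction'' conceals the most delicate point of the paper's argument: for $j,i\in R$ one must show that the cycle in $T+\sigma_i(e)$ descends to the cycle in $T/\bar R+\sigma_i(e)$, which requires proving that the path in $T$ joining the endpoints of $\sigma_i(e)$ meets each subcube $Q_n(R,X')$ in a single subpath, so that contraction really produces a cycle containing both $e$ and $\sigma_i(e)$. You can sidestep this (and repair the first issue simultaneously) by arguing at the level of spanning trees rather than slidable edges: $T_1,T_2\in\rtree{R}$ are adjacent in $\eslidesig{\red{R}}$ iff $T_2=T_1-e+\sigma_i(e)$ for some edge $e$ in a direction $j\neq i$, and since both trees reduce over $R$, Theorems~\ref{thm:reducible-subtrees} and~\ref{thm:rtrees-bijection} force the pair $\{e,\sigma_i(e)\}$ into a single factor and show the replacement is an edge slide there, with the converse lifting by the same token. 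Your final identification $\eslidesig{Q_n(R,X)}\cong\eslide{n-r}$ and the count of $2^r$ factors are fine.
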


Theorem~\ref{thm:redR} follows from Theorem~\ref{thm:rtrees-bijection}
and the following characterisation of edge slides in a reducible tree:

\begin{theorem}
\label{thm:reducibleslides}
Let $R$ be a proper nonempty subset of $[n]$, and let $T$ be a spanning tree of $Q_n$ that reduces over $R$. Let $e=\{Y,Y\oplus\{j\}\}$ be an edge of $T$ in direction $j$, and set $X=Y\cap R$. 
\begin{enumerate}
\item
If $j\notin R$, then $e\in Q_n(R,X)$ and
\begin{enumerate}
\item\label{item:invariance}
$e$ is not slidable in any direction $i\in R$;
\item
$e$ is slidable in direction $i\notin R$ if and only if it is $i$--slidable as an edge of $T(R,X)$. 
\end{enumerate}
\item
If $j\in R$, then $e$ is $i$--slidable if and only if it is $i$--slidable as an edge of $T/\bar{R}$. Consequently
\begin{enumerate}
\item
$e$ is slidable in any direction $i\notin R$;
\item
$e$ is slidable in direction $i\in R$ if and only if $\capr{R}(e)$ is $i$--slidable as an edge of the tree $\capr{R}(T)$. 
\end{enumerate}
\end{enumerate}
\end{theorem}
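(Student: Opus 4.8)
The plan is to prove both cases directly from the characterisation of slidable edges in Lemma~\ref{lem:slidable}: an edge $e$ of $T$ in direction $j\neq i$ is $i$--slidable precisely when $\sigma_i(e)\notin T$ and the unique cycle $C$ in $T+\sigma_i(e)$ contains $e$. Since $C$ is $\sigma_i(e)$ together with the unique path $P$ in $T$ joining the two endpoints of $\sigma_i(e)$, the whole argument reduces to locating $P$. Throughout I would use Theorem~\ref{thm:reducible-subtrees}, which guarantees that each $T(R,X)$ is a spanning tree of $Q_n(R,X)$ and that $T/\bar{R}$ is a spanning tree of $\contractb{R}$.

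For Case~1, where $j\notin R$, the two endpoints of $\sigma_i(e)$ share the same $R$--part: this common part is $X$ if $i\notin R$ and $X\oplus\{i\}$ if $i\in R$. Because the relevant $T(R,\cdot)$ is a spanning tree of its subcube and $T$ is a tree, the path $P$ lies entirely inside that single subcube. When $i\in R$ this subcube is $Q_n(R,X\oplus\{i\})$, which is disjoint from the subcube $Q_n(R,X)$ containing $e$; hence $C$ cannot contain $e$ and $e$ is not $i$--slidable, proving~\ref{item:invariance}. When $i\notin R$ the subcube is $Q_n(R,X)$, so $C$ coincides with the cycle created by adding $\sigma_i(e)$ to $T(R,X)$, and applying Lemma~\ref{lem:slidable} inside $Q_n(R,X)$ shows that $e$ is $i$--slidable in $T$ if and only if it is $i$--slidable in $T(R,X)$.

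For Case~2, where $j\in R$, the key claim is that $e$ is $i$--slidable in $T$ if and only if it is $i$--slidable as an edge of $T/\bar{R}$; consequences (a) and (b) then follow at once from the description of edge slides in $\contractb{R}$ given before Observation~\ref{obs:contraction} (a slide in a direction $i\notin R$ is a mere change of label and is always possible, while a slide in a direction $i\in R$ projects to a slide of $\capr{R}(e)$ in $\capr{R}(T)=T_R$). To prove the claim I would compare the cycle $C$ in $T+\sigma_i(e)$ with the cycle $\bar{C}$ in $(T/\bar{R})+\sigma_i(e)$. Contracting the spanning forest $\bigcup_{X\subseteq R}T(R,X)$ carries $T$ onto $T/\bar{R}$ and sends the simple path $P$ to the unique path $\bar{P}$ in the tree $T/\bar{R}$ between the images of the endpoints of $\sigma_i(e)$, with the $R$--direction edges of $P$ mapping bijectively onto the edges of $\bar{P}$. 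In particular the $R$--direction edge $e$ lies in $C$ if and only if it lies in $\bar{C}$. It remains to match $\sigma_i(e)\notin T$ with $\sigma_i(e)\notin T/\bar{R}$: for $i\in R$ this is immediate because the edges of $T/\bar{R}$ are exactly the $R$--direction edges of $T$, and for $i\notin R$ both conditions hold, since $\sigma_i(e)$ is then parallel to $e$ in $\contractb{R}$ and the acyclic tree $T/\bar{R}$ already contains $e$ (as in the proof of Corollary~\ref{cor:TcapR}).

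The main obstacle is the cycle-projection step in Case~2: one must check carefully that contracting the subcube spanning forests sends the simple path $P\subseteq T$ to a genuine simple path $\bar{P}$ in $T/\bar{R}$, neither collapsing nor duplicating the $R$--direction edge $e$, so that membership of $e$ in $C$ and in $\bar{C}$ really do agree. Once this bijection between the $R$--edges of $P$ and the edges of $\bar{P}$ is established, everything else is a routine application of Lemma~\ref{lem:slidable}, first in $Q_n$ and then in $\contractb{R}$.
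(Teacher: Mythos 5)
Your proposal is correct and follows essentially the same route as the paper's proof: both cases are handled via Lemma~\ref{lem:slidable} by locating the fundamental cycle, Case~1 is argued identically by noting that the cycle lives in a single subcube $Q_n(R,X')$, and Case~2 rests on the same key claim that contraction carries the tree path $P$ to the tree path $\bar{P}$ in $T/\bar{R}$ with the $R$--direction edges in bijection (the paper establishes this by showing each nonempty $P\cap Q_n(R,X')$ is a single subpath). The one point you flag as needing care is exactly the point the paper spends its effort on, and your sketch of it is sound.
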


\begin{proof}
Let $i\in [n]$ be such that $i\neq j$. By Lemma~\ref{lem:slidable}, for $e$ to be $i$--slidable we require that $\sigma_i(e)$ does not belong to $T$, and that the resulting cycle $C$ in $T+ \sigma_i(e)$ created by adding $\sigma_i(e)$ to $T$ also contains $e$. 
We consider four possibilities, according to whether $i$ and $j$ belong to $R$. 

\begin{enumerate}
\item
Suppose first that $j\notin R$. Then $\sigma_i(e)$ lies in $Q_n(R,X')$, where $X'=(X\oplus\{i\})\cap R$. Since $T(R,X')$ is a spanning tree of $Q_n(R,X')$, if $\sigma_i(e)$ does not belong to $T$ then the cycle $C$ lies entirely in $Q_n(R,X')$. 
\begin{enumerate}
\item 
If $i\in R$ then $X'=X\oplus\{i\}\neq X$, so $e$ does not belong to $C$. It follows that $e$ is not $i$--slidable. 
\item
If $i\notin R$ then $X'=X$, and $\sigma_i(e)$ does not belong to $T$ if and only if it does not belong to $T(R,X)$. If that is the case then $C$ is the cycle in $T(R,X)+\sigma_i(e)$ created by adding $\sigma_i(e)$ to $T(R,X)$, and it follows that $e$ is $i$--slidable as an edge of $T$ if and only if it is $i$--slidable as an edge of $T(R,X)$. 
\end{enumerate}
\item
Suppose now that $j\in R$. The edges of $\contractb{R}$ in direction $j$ may be naturally identified with the edges of $Q_n$ in direction $j$, and with respect to this identification, for any $i\neq j$ the edge $\sigma_i(e)$ belongs to $T$ if and only if it belongs to $T/\bar{R}$. It follows that if $\sigma_i(e)$ belongs to $T$ then $e$ is $i$--slidable in neither $T$ nor $T/\bar{R}$. So suppose that $\sigma_i(e)$ does not belong to $T$, and let $P$ be the path in $T$ from one endpoint of $\sigma_i(e)$ to the other. Write  $P=v_0v_1\dots v_\ell$, and for $0\leq a\leq\ell$ let $X_a\subseteq R$ be such that $v_a\in Q_n(R,X_a)$; that is, $X_a=v_a\cap R$. Note that $C=P+\sigma_i(e)$.

For any $0\leq a\leq b\leq\ell$, the subpath $v_av_{a+1}\cdots v_b$ is the unique path in $T$ from $v_a$ to $v_b$. If $X_a=X_b$ then $v_a$ and $v_b$ both belong to $Q_n(R,X_a)$, so this path must be the unique path from $v_a$ to $v_b$  inside the spanning tree $T(R,X_a)$ of $Q_n(R,X_a)$. Therefore $X_c=X_a$ for $a\leq c\leq b$. It follows that for all $X'\subseteq R$, if $P(R,X')=P\cap Q(R,X')$ is nonempty then it consists of a single path. 

Consequently, when the subcubes 
$Q_n(R,X')$ are contracted to form $\contractb{R}$, the resulting subgraph  $C/\bar{R}$ is still a cycle, because it is a contraction of $C$ in its own right. This cycle is the cycle in $T/\bar{R}+\sigma_i(e)$ that is created when $\sigma_i(e)$ is added to $T/\bar{R}$, and so it contains both $e$ and $\sigma_i(e)$ if and only if both edges belong to $C$. It follows that $e$ is $i$--slidable in $T$ if and only if it is $i$--slidable in $T/\bar{R}$. 

As an edge of $T/\bar{R}$ the endpoints of $e$ are $X$ and $X\oplus\{j\}$, and the endpoints of $\sigma_i(e)$ are $X'=(X\oplus\{i\})\cap R$ and $X'\oplus\{j\}$. We now consider two cases according to whether or not $i\in R$. 
\begin{enumerate}
\item 
If $i\notin R$ then $X=X'$ and the end points of $e$ and $\sigma_i(e)$ in $T/\bar{R}$ co-incide. Therefore $C/\bar{R}$ must consist of $e$ and $\sigma_i(e)$ only, and so contains both edges. Therefore $e$ is $i$--slidable.
\item
If $i\in R$ then the endpoints of $e$ and $\sigma_i(e)$ in $T/\bar{R}$ differ. If an edge $f$ parallel to $\sigma_i(e)$ belongs to $T/\bar{R}$ then $C/\bar{R}$ consists of $f$ and $\sigma_i(e)$ only, and $e$ is not $i$--slidable in $T/\bar{R}$. In this case $\capr{R}(f)=\sigma_i(\capr{R}(e))$ belongs to $\capr{R}(T)$, so $\capr{R}(e)$ is not $i$--slidable in $\capr{R}(T)$ either.

Otherwise, $\capr{R}(\sigma_i(e))=\sigma_i(\capr{R}(e))$ does not belong to $\capr{R}(T)$, and the cycle $C'$ created by adding $\sigma_i(\capr{R}(e))$ to $\capr{R}(T)$ is $\capr{R}(C/\bar{R})=\capr{R}(C)$. Since $T/\bar{R}+\sigma_i(e)$ contains at most one edge from each parallel family of edges in \contractb{R}, the cycle $C/\bar{R}$ contains both $e$ and $\sigma_i(e)$ if and only if $C'$ contains both $\capr{R}(e)$ and $\sigma_i(\capr{R}(e))$. It follows that $e$ is $i$--slidable in $T/\bar{R}$ if and only if $\capr{R}(e)$ is $i$--slidable in $\capr{R}(T)$, as claimed. 
\end{enumerate}
\end{enumerate}
\end{proof}

\begin{proof}[Proof of Theorem~\ref{thm:redR}]
The vertex set of $\eslidesig{\red{R}}$ is $\rtree{R}$, and 
the vertex set of the product $\eslidesig{\contractb{R}} \boxempty \bigbox_{X\subseteq R} \eslidesig{Q_n(R,X)}$ is $\tree{\contractb{R}}\times\prod_{X\subseteq R}\tree{Q_n(R,X)}$. By Theorem~\ref{thm:rtrees-bijection} the function $\Psi_R$ is a bijection between the vertex sets of 
$\eslidesig{\red{R}}$ and $\eslidesig{\contractb{R}} \boxempty \bigbox_{X\subseteq R} \eslidesig{Q_n(R,X)}$. 

Let $T\in \rtree{R}$, and let $e$ be an edge of $T$. Then Theorem~\ref{thm:reducibleslides} shows that 
$e$ is slidable as an edge of $T$ if and only if it is slidable as an edge of whichever tree $T/\bar{R}$ or $T(R,X)$ it belongs to with respect to the decomposition $\Psi_R(T)$ of Theorem~\ref{thm:rtrees-bijection}. Moreover, the proofs of these theorems show that when $e$ is slidable, the edge slide and the decomposition commute: if $T'$ is the result of sliding $e$ in $T$, then $T'/\bar{R}$ or $T'(R,X)$ (as applicable) is the result of sliding $e$ in the decomposition. It follows that $\Psi_R$ is a graph homomorphism, completing the proof.
\end{proof}
 
As a corollary to part~\ref{item:invariance} of Theorem~\ref{thm:reducibleslides} we obtain the following:
\begin{corollary}
\label{cor:invariantsubsignatures}
Let $R$ be a proper nonempty subset of $[n]$, and let $T$ be a spanning tree of $Q_n$ that reduces over $R$.
For all $X\subseteq R$, the signature of $T(R,X)$ is an invariant of the connected component of $\eslide{n}$ containing $T$. More precisely, suppose that $T'$ can be obtained from $T$ by edge slides. Then $\sig(T'(R,X))=\sig(T(R,X))$ for all $X\subseteq R$. 
\end{corollary}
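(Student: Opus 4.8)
The plan is to prove the statement for a single edge slide and then induct on the number of slides in a sequence carrying $T$ to $T'$. First I would record that, since the signature is constant on connected components of \eslide{n}, every tree in the component of $T$ has signature $\sig(T)$, which reduces over $R$; hence every such tree lies in \rtree{R}, and by Theorem~\ref{thm:reducible-subtrees} its restriction to each $Q_n(R,X)$ is a genuine spanning tree. In particular the signatures $\sig(T'(R,X))$ are well defined. The one elementary fact about edge slides I would lean on is that sliding an edge $e$ of direction $j$ in direction $i$ replaces $e$ by $\sigma_i(e)$, which \emph{also} has direction $j$.

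For the inductive step, suppose $T'=T-e+\sigma_i(e)$ is obtained by a single slide, with $e=\{Y,Y\oplus\{j\}\}$ and $X=Y\cap R$. I would split on whether $j\in R$. If $j\in R$, then both $e$ and $\sigma_i(e)$ lie in directions belonging to $R$, so neither lies inside any subcube $Q_n(R,X)$; consequently $T'(R,X)=T(R,X)$ for every $X\subseteq R$ and there is nothing to check. If $j\notin R$, then $e$ lies in the single subcube $Q_n(R,X)$, and part~\ref{item:invariance} of Theorem~\ref{thm:reducibleslides} tells us that $e$ cannot be slid in any direction belonging to $R$; thus the slide direction satisfies $i\notin R$, which keeps $\sigma_i(e)$ in the same subcube $Q_n(R,X)$. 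Hence the only piece that changes is $T(R,X)$, and it changes by deleting one edge of direction $j$ and inserting another edge of the same direction $j$. Its signature is therefore unchanged, and all other pieces are fixed.

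Combining the two cases, a single edge slide of $T$ preserves $\sig(T(R,X))$ for every $X\subseteq R$, and induction along a sequence of slides from $T$ to $T'$ yields the result. I do not expect a serious obstacle here: the entire content sits in part~\ref{item:invariance} of Theorem~\ref{thm:reducibleslides}. The only point needing care is the confinement of the edge change to a single subcube $Q_n(R,X)$, which is exactly the combination of part~\ref{item:invariance} (ruling out slides in directions belonging to $R$) with the observation that a slide in a direction $i\notin R$ fixes the index $X=Y\cap R$.
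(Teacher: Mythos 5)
Your proposal is correct and follows essentially the same route as the paper, which presents this corollary as an immediate consequence of part~\ref{item:invariance} of Theorem~\ref{thm:reducibleslides}: your case split on $j\in R$ versus $j\notin R$, the use of part~\ref{item:invariance} to force the slide direction $i\notin R$ in the latter case, and the observation that $\sigma_i(e)$ retains direction $j$ and stays in the same subcube $Q_n(R,X)$ are precisely the details the paper leaves implicit. The preliminary remark that every tree in the component still reduces over $R$ (so the restricted signatures are well defined) is a sensible addition.
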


\begin{example}
Refer again to the tree $T$ in Figure~\ref{fig:decomposition}, which reduces over $\{1\}$. No edge of $T(\{1\},\emptyset)$ or $T(\{1\},\{1\})$ may be slid in direction 1. The only slidable edge of $T(\{1\},\emptyset)$ is $\{\emptyset,\{3\}\}$, which may be slid in direction 2 only; and the only slidable edge of $T(\{1\},\{1\})$ is $\{\{1,3\},\{1,2,3\}\}$, which may be slid in direction 3 only. The edge $\{\{2\},\{1,2\}\}$ in direction $1\in R$ may be freely slid in either direction 2 or 3. These edge slides all leave $\sig(T(\{1\},\emptyset))=(2,1)$ and $\sig(T(\{1\},\{1\}))=(1,2)$ unchanged. 
\end{example}

\subsection{Special cases}
\label{sec:special}

We now apply Theorem~\ref{thm:redR} in several special cases. We first consider the case $R=\{1\}$, and then use this to show that the edge slide graph of a supersaturated signature has the isomorphism type of a cube. We then apply this in turn to express the edge slide graph of a saturated signature in terms of an edge slide graph associated with its unsaturated part.

\begin{theorem}
\label{thm:red1n}
Let $n\geq 2$. Then
\[
\eslidesig{\red{\{1\}}} \cong Q_{n-1}\boxempty(\eslide{n-1})^{\boxempty 2}
    \cong \coprod_{\mathcal{S}_1,\mathcal{S}_2\in\Sig(Q_{n-1})}
       Q_{n-1}\boxempty\eslidesig{\mathcal{S}_1}\boxempty\eslidesig{\mathcal{S}_2}.
\]
For $\mathcal{S}=(1,\mathcal{S}')\in\red{\{1\}}$ we have
\[
\eslidesig{\mathcal{S}} 
    \cong \coprod_{\mathcal{S}_1+\mathcal{S}_2=\mathcal{S}'}
       Q_{n-1}\boxempty\eslidesig{\mathcal{S}_1}\boxempty\eslidesig{\mathcal{S}_2}.
\]
\end{theorem}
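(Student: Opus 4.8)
The plan is to obtain all three isomorphisms from Theorem~\ref{thm:redR} by first determining the isomorphism type of the edge slide graph of the contraction $\contractb{\{1\}}$, and then repeatedly invoking the distributivity of the Cartesian product over disjoint unions. The key preliminary step is to show that $\eslidesig{\contractb{\{1\}}}\cong Q_{n-1}$. Since $|R|=1$, the underlying simple graph $Q_R=Q_{\{1\}}$ is the single edge $Q_1$, and $\contractb{\{1\}}$ is the multigraph on the two vertices $\emptyset$ and $\{1\}$ with $2^{n-1}$ parallel edges, one for each $Y\in\power{\{2,\ldots,n\}}$. A spanning tree of $\contractb{\{1\}}$ is therefore a choice of a single parallel edge, which I identify with its label $Y$. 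Using the description of edge slides on $\contractb{R}$ given after Definition~\ref{def:contraction}: no slide in direction $1\in R$ is possible, because such a slide would be an edge slide in $Q_R=Q_1$, which has no second direction to slide across; while a slide in a direction $i\in\bar{R}=\{2,\ldots,n\}$ simply replaces the label $Y$ by $Y\oplus\{i\}$. The vertex set $\power{\{2,\ldots,n\}}$ with the adjacencies $Y\leftrightarrow Y\oplus\{i\}$ is exactly the $(n-1)$-cube, so $\eslidesig{\contractb{\{1\}}}\cong Q_{n-1}$.

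With this in hand, the first isomorphism is Theorem~\ref{thm:redR} specialised to $r=|R|=1$: substituting $\eslidesig{\contractb{\{1\}}}\cong Q_{n-1}$ and $\eslide{n-r}=\eslide{n-1}$ yields $\eslidesig{\red{\{1\}}}\cong Q_{n-1}\boxempty(\eslide{n-1})^{\boxempty 2}$. For the second isomorphism I would split each factor $\eslide{n-1}$ into its signature components. Because the signature is constant on connected components of the edge slide graph, $\eslide{n-1}=\coprod_{\mathcal{S}'\in\Sig(Q_{n-1})}\eslidesig{\mathcal{S}'}$, and distributing the Cartesian product over this disjoint union across both factors (property~(2) following the definition of $\boxempty$) produces the claimed coproduct over pairs $(\mathcal{S}_1,\mathcal{S}_2)$ of signatures of $Q_{n-1}$.

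For the final, signature-specific statement I would restrict the isomorphism of Theorem~\ref{thm:redR} to the spanning trees with signature $\mathcal{S}=(1,\mathcal{S}')$. By Observation~\ref{obs:productsignature}, a tree $T\in\rtree{\{1\}}$ with $\Psi_{\{1\}}(T)=(T_R,T^\emptyset,T^{\{1\}})$ satisfies $\sig(T)=\mathcal{S}$ if and only if $\sig(T_R)=\mathcal{S}|_{\{1\}}=(1)$ and $\sig(T^\emptyset)+\sig(T^{\{1\}})=\mathcal{S}'$; the first condition is automatic, since every spanning tree of $\contractb{\{1\}}$ consists of a single edge in direction~$1$. By Corollary~\ref{cor:invariantsubsignatures} the signatures $\sig(T^\emptyset)$ and $\sig(T^{\{1\}})$ are invariants of each connected component, so $\eslidesig{\mathcal{S}}$ is precisely the union of the product components indexed by pairs with $\mathcal{S}_1+\mathcal{S}_2=\mathcal{S}'$, giving the stated coproduct.

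The only genuinely non-routine step is the identification $\eslidesig{\contractb{\{1\}}}\cong Q_{n-1}$; the remaining steps are bookkeeping with the distributive law and Observation~\ref{obs:productsignature}. I expect the main obstacle to be describing the edge slide structure of $\contractb{\{1\}}$ cleanly---in particular, verifying that no slide in direction~$1$ can occur and that the label-changing slides in the other directions reproduce exactly the cube adjacency.
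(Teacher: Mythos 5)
Your proposal is correct and follows essentially the same route as the paper: identify $\eslidesig{\contractb{\{1\}}}\cong Q_{n-1}$ via the labelling of the parallel edges by $\power{\{2,\dots,n\}}$, apply Theorem~\ref{thm:redR}, distribute the Cartesian product over the signature decomposition of $\eslide{n-1}$, and read off the signature condition from Observation~\ref{obs:productsignature}. The only cosmetic difference is that the paper computes $\sig(T)=(1,\sig(T_1)+\sig(T_2))$ directly rather than citing Corollary~\ref{cor:invariantsubsignatures}, but this changes nothing of substance.
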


\begin{proof}
For compactness of notation let $R=\{1\}$. By Theorem~\ref{thm:redR} we have
\[
\eslidesig{\red{R}} \cong \eslidesig{\contractb{R}}\boxempty(\eslide{n-1})^{\boxempty 2}.
\]
The graph $\contractb{R}$ is a multigraph with underlying simple graph $Q_1$, and $2^{n-1}$ parallel edges labelled with the subsets of $\bar{R}=\{2,\ldots,n\}$. A spanning tree of \contractb{R}\ consists of a single edge, which may be canonically identified with its label in $\power{\bar{R}}$. Two such trees are related by an edge slide in direction $j$ precisely when their labels differ by adding or deleting $j$, so 
\[
\eslidesig{\contractb{R}} \cong Q_{[n]-{R}} \cong Q_{n-1}.
\]
This gives the first isomorphism. The second then follows from the fact that
\[
\eslide{n-1} = \coprod_{\mathcal{S}\in\Sig(Q_{n-1})} \eslidesig{\mathcal{S}}.
\]

For the final assertion, under the isomorphisms a vertex $(Y,T_1,T_2)$ of $Q_{n-1}\boxempty\eslidesig{\mathcal{S}_1}\boxempty\eslidesig{\mathcal{S}_2}$ corresponds to a tree $T\in\eslidesig{\red{R}}$ such that
\begin{align*}
T(R,\emptyset) &= T_1, & 
T(R,R) &= T_2. 
\end{align*}
The signature of $T$ is given by
\[
\sig(T) = (1,\sig(T_1)+\sig(T_2)) = (1,\mathcal{S}_1+\mathcal{S}_2),
\]
from which the claim follows.
\end{proof}

\begin{example}
\label{ex:red31}
We apply Theorem~\ref{thm:red1n} to determine the components of $\red[3]{\{1\}}$. We have
\[
\eslide{2} = \eslidesig{1,2}\amalg\eslidesig{2,1}\cong Q_1\amalg Q_1,
\]
so 
\[
\red[3]{\{1\}} 
    \cong Q_2\boxempty[\eslidesig{1,2}\amalg\eslidesig{2,1}]^{\boxempty 2}
    \cong Q_2\boxempty[Q_1\amalg Q_1]^{\boxempty 2}.
\]
Therefore
$\red[3]{\{1\}}$ has four components, each isomorphic to $Q_2\boxempty Q_1\boxempty Q_1\cong Q_4$. Trees belonging to the component $Q_2\boxempty\eslidesig{a,b}\boxempty\eslidesig{c,d}$ have signature $(1,a+c,b+d)$, so
\begin{align*}
\eslidesig{1,2,4} &\cong Q_2\boxempty(\eslidesig{1,2})^{\boxempty 2} 
                   \cong Q_4, \\
\eslidesig{1,3,3} &\cong \bigl(Q_2\boxempty\eslidesig{1,2}\boxempty\eslidesig{2,1}\bigr)\amalg
       \bigl(Q_2\boxempty\eslidesig{2,1}\boxempty\eslidesig{1,2}\bigr) 
                  \cong Q_4\amalg Q_4, \\
\eslidesig{1,4,2} &\cong Q_2\boxempty(\eslidesig{2,1})^{\boxempty 2}
                  \cong Q_4.
\end{align*}
Up to permutation there is just one remaining signature of $Q_3$, namely the irreducible signature $(2,2,3)$. This signature is connected, and the structure of $\eslidesig{2,2,3}$ has been determined by Henden~\cite{henden-2011}.
\end{example}

As a corollary to Theorem~\ref{thm:red1n} we show that the edge slide graph of a supersaturated signature has the isomorphism type of a cube:
\begin{corollary}
\label{cor:Esupersat}
For the supersaturated signature $\supersat=(1,2,4,\dots,2^{n-1})$ we have
\[
\eslidesig{\supersat} \cong Q_{2^n-n-1}.
\]
\end{corollary}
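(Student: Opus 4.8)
The plan is to induct on $n$, peeling off the first direction with Theorem~\ref{thm:red1n} and then collecting cubes via the identity $Q_a\boxempty Q_b\cong Q_{a+b}$. For the base case $n=1$ the signature $(1)$ has a single spanning tree, so $\eslidesig{(1)}\cong Q_0$, in agreement with $2^1-1-1=0$. For $n\geq 2$, note that $\supersat=(1,\mathcal{S}')$ reduces over $\{1\}$, where $\mathcal{S}'=(2,4,\dots,2^{n-1})$ is a signature of $Q_{n-1}$ whose $i$th entry is $2^i$. The final assertion of Theorem~\ref{thm:red1n} then gives
\[
\eslidesig{\supersat}\cong\coprod_{\mathcal{S}_1+\mathcal{S}_2=\mathcal{S}'}Q_{n-1}\boxempty\eslidesig{\mathcal{S}_1}\boxempty\eslidesig{\mathcal{S}_2},
\]
the disjoint union ranging over ordered pairs of signatures of $Q_{n-1}$ whose sum is $\mathcal{S}'$.

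The crux is to show this disjoint union has exactly one term, corresponding to $\mathcal{S}_1=\mathcal{S}_2=\supersat[n-1]$. I would establish the following rigidity lemma: if $\mathcal{S}_1=(c_1,\dots,c_{n-1})$ and $\mathcal{S}_2=(d_1,\dots,d_{n-1})$ are signatures of $Q_{n-1}$ with $c_i+d_i=2^i$ for all $i$, then $c_i=d_i=2^{i-1}$ for every $i$. The proof is a downward induction on $i$. The top coordinate is immediate, since each entry of a signature of $Q_{n-1}$ is at most $2^{n-2}$, so $c_{n-1}+d_{n-1}=2^{n-1}$ forces $c_{n-1}=d_{n-1}=2^{n-2}$. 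Assuming $c_j=d_j=2^{j-1}$ for all $j>i$, the total $\sum_j c_j=2^{n-1}-1$ together with the known tail gives $\sum_{j=1}^{i}c_j=2^i-1$; but $\mathcal{S}_1$ being a signature means $\sum_{j\in K}c_j\geq 2^{|K|}-1$ for every $K\subseteq[n-1]$ (Theorem~\ref{thm:characterisation}), so the bound for $K=\{1,\dots,i\}$ holds with equality, and the bound for $K=\{1,\dots,i-1\}$ then yields $c_i=(2^i-1)-\sum_{j=1}^{i-1}c_j\leq 2^{i-1}$. The same argument applied to $\mathcal{S}_2$ gives $d_i\leq 2^{i-1}$, so $c_i+d_i=2^i$ forces $c_i=d_i=2^{i-1}$, completing the induction.

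With the rigidity lemma in hand the coproduct collapses to
\[
\eslidesig{\supersat}\cong Q_{n-1}\boxempty\eslidesig{\supersat[n-1]}\boxempty\eslidesig{\supersat[n-1]}.
\]
By the inductive hypothesis $\eslidesig{\supersat[n-1]}\cong Q_{2^{n-1}-n}$, so repeatedly applying $Q_a\boxempty Q_b\cong Q_{a+b}$ yields a cube of dimension $(n-1)+2(2^{n-1}-n)=2^n-n-1$, as required. The main obstacle is the rigidity lemma; everything else is bookkeeping with the Cartesian product. The lemma works because $\supersat[n-1]$ is the unique signature of $Q_{n-1}$ all of whose initial partial sums attain the minimum $2^k-1$ permitted by the signature condition, which is exactly what makes the decomposition of $\mathcal{S}'$ forced.
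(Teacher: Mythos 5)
Your proof is correct and follows essentially the same route as the paper: induction on $n$, applying Theorem~\ref{thm:red1n} to peel off direction $1$, showing the coproduct collapses to the single term $\mathcal{S}_1=\mathcal{S}_2=\supersat[n-1]$, and then combining cubes. Your ``rigidity lemma'' is a careful write-up of the step the paper dismisses with ``it follows easily from the characterisation of signatures,'' and your downward induction there is valid.
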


\begin{proof}
The proof is by induction on $n$, with the technique used to find \eslidesig{1,2,4} in Example~\ref{ex:red31} providing the inductive step. We have previously found
\begin{align*}
\eslidesig{1} &\cong Q_0, & 
\eslidesig{1,2} &\cong Q_1, & 
\eslidesig{1,2,4} &\cong Q_4
\end{align*}
so the result is already established for $n\leq 3$. We may therefore use any one of these cases as the base for the induction.

For the inductive step, suppose that the result holds for \supersat[n-1]. The signature \supersat\ belongs to \red{\{1\}}, so by Theorem~\ref{thm:red1n} it is a disjoint union of subgraphs of \eslide{n}\ of the form $Q_{n-1}\boxempty\eslidesig{\mathcal{S}_1}\boxempty\eslidesig{\mathcal{S}_2}$. Such a subgraph lies in \eslidesig{\supersat}\ precisely when 
\[
\mathcal{S}_1+\mathcal{S}_2=(2,4,\dots,2^{n-2}),
\]
and it follows easily from the characterisation of signatures Theorem~\ref{thm:characterisation} that the only possibility is $\mathcal{S}_1=\mathcal{S}_2=\supersat[n-1]$. We therefore have
\begin{align*}
\eslidesig{\supersat} 
  &\cong 
   Q_{n-1}\boxempty \eslidesig{\supersat[n-1]}\boxempty\eslidesig{\supersat[n-1]}\\
  &\cong Q_{n-1}\boxempty (Q_{2^{n-1}-n})^{\boxempty 2}  \\
  &\cong Q_{2^n-n-1}.
\end{align*}
This establishes the inductive step.
\end{proof}

As our final special case, we use Corollary~\ref{cor:Esupersat} to show that the edge slide graph of a saturated signature may be expressed in terms of an edge slide graph associated with its unsaturated part:
\begin{corollary}
\label{cor:Esat}
Suppose that the ordered signature $\mathcal{S}=(a_1,\dots,a_n)$ is saturated above direction $r$. Let $R=[r]$ and let $S'=(a_1,\dots,a_r)$. Then
\begin{align*}
\eslidesig{\mathcal{S}} 
  &\cong 
   \eslidesig[Q_n/\bar{R}]{\mathcal{S}'}\boxempty(Q_{2^{n-r}-(n-r)-1})^{\boxempty 2^r}\\
  &\cong\eslidesig[Q_n/\bar{R}]{\mathcal{S}'}\boxempty Q_{N},
\end{align*}
where $N=2^r(2^{n-r}-(n-r)-1)$. 
\end{corollary}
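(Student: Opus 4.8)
The plan is to apply the edge slide graph isomorphism Theorem~\ref{thm:redR} with $R=[r]$ and then read off which factors survive once we restrict to the single signature $\mathcal{S}$. Since $\mathcal{S}$ is ordered and saturated above direction $r$, we have $\excess{r}{\mathcal{S}}=0$, so $R=[r]$ is a proper nonempty reducing set and $\mathcal{S}\in\red{R}$; moreover the remark following Definition~\ref{defn:saturated} gives $a_k=2^{k-1}$ for $r+1\leq k\leq n$, so that $\mathcal{S}|_R=\mathcal{S}'=(a_1,\dots,a_r)$ and $\mathcal{S}|_{[n]-R}=(2^r,2^{r+1},\dots,2^{n-1})$. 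Theorem~\ref{thm:redR} provides
\[
\eslidesig{\red{R}}\cong\eslidesig{\contractb{R}}\boxempty(\eslide{n-r})^{\boxempty 2^r},
\]
and Observation~\ref{obs:productsignature} describes signatures under the underlying bijection $\Psi_R$: writing $\Psi_R(T)=(T_R,(T^X)_{X\subseteq R})$, we have $\sig(T)=\mathcal{S}$ if and only if $\sig(T_R)=\mathcal{S}'$ and $\sum_{X\subseteq R}\sig(T^X)=(2^r,\dots,2^{n-1})$.

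First I would isolate $\eslidesig{\mathcal{S}}$ inside this product. Restricting the first factor to spanning trees of $\contractb{R}$ with signature $\mathcal{S}'$ yields $\eslidesig[\contractb{R}]{\mathcal{S}'}$, and since an isomorphism restricted to a union of connected components (here, a fixed signature) is an isomorphism onto its image, it remains only to determine which tuples of trees $(T^X)_{X\subseteq R}$ occur in the other factors. By the observation above these are exactly the tuples whose signatures $(\sig(T^X))_{X\subseteq R}$, each a signature of $Q_{n-r}$, satisfy $\sum_{X\subseteq R}\sig(T^X)=(2^r,2^{r+1},\dots,2^{n-1})=2^r\,\supersat[n-r]$.

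The main obstacle, and the heart of the argument, is to show that this system has a unique solution, namely $\sig(T^X)=\supersat[n-r]$ for every $X\subseteq R$. This is an extremal argument of exactly the kind used in Corollary~\ref{cor:Esupersat}: in the top direction each of the $2^r$ signatures contributes at most $2^{n-r-1}$ edges, while their sum must equal $2^{n-1}=2^r\cdot 2^{n-r-1}$, forcing equality and hence a maximal top entry for each $T^X$; peeling off the top direction leaves, by the characterisation of signatures (Theorem~\ref{thm:characterisation}), signatures of $Q_{n-r-1}$ summing to $2^r\,\supersat[n-r-1]$, and induction on $n-r$ (with base case the unique signature $(1)$ of $Q_1$) pins down every entry. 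Consequently the only surviving factors are copies of $\eslidesig{\supersat[n-r]}$, one for each $X\subseteq R$, giving
\[
\eslidesig{\mathcal{S}}\cong\eslidesig[\contractb{R}]{\mathcal{S}'}\boxempty\bigbox_{X\subseteq R}\eslidesig{\supersat[n-r]}.
\]

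Finally I would substitute Corollary~\ref{cor:Esupersat}, $\eslidesig{\supersat[n-r]}\cong Q_{2^{n-r}-(n-r)-1}$, to obtain the first claimed isomorphism with $2^r$ cube factors, and then collapse the $2^r$-fold Cartesian product of cubes using $Q_a\boxempty Q_b\cong Q_{a+b}$, so that $(Q_m)^{\boxempty 2^r}\cong Q_{2^r m}$ with $m=2^{n-r}-(n-r)-1$. This produces the second isomorphism with $N=2^r(2^{n-r}-(n-r)-1)$, completing the proof.
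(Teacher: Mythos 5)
Your proposal is correct and follows essentially the same route as the paper: apply Theorem~\ref{thm:redR} with $R=[r]$, use Observation~\ref{obs:productsignature} to force $\sig(T^X)=\supersat[n-r]$ for every $X\subseteq R$, and then substitute Corollary~\ref{cor:Esupersat} and collapse the product of cubes. The only cosmetic difference is that you pin down the signatures $\sig(T^X)$ by a top-down extremal argument (maximal top entry, then peel and induct), whereas the paper runs the equally easy induction from the bottom direction upward using the signature condition.
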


\begin{proof}
The signature $\mathcal{S}$ reduces over $R$, so $\eslidesig{\mathcal{S}}$ consists of one or more connected components of $\eslidesig{\red{R}}$. By Theorem~\ref{thm:redR} we have
\[
\eslidesig{\red{R}}\cong \eslidesig{\contractb{R}} \boxempty \bigbox_{X\subseteq R} \eslidesig{Q_n(R,X)},
\]
and by Observation~\ref{obs:productsignature} a vertex $(T_R,(T^X)_{X\subseteq R})$ of this product corresponds to a tree with signature $\mathcal{S}$ if and only if
\[
\sig(T_R)=\mathcal{S}|_R=\mathcal{S}'\text{ and }\sum_{X\subseteq R}\sig(T^X)=\mathcal{S}|_{\bar{R}} = (2^{r},2^{r+1},\dots,2^{n-1}).
\]
Let $\sig(T^X) = (e^X_{r+1},\dots,e^X_n)$ for each $X\subseteq R$. Then an easy induction on $j$ using the signature condition shows that $e^X_{r+j}= 2^j$ for all $X\subseteq R$, so that $\sig(T^X)=\supersat[n-r]$ for all $X$. Then
\[
\eslidesig{\mathcal{S}} \cong
   \eslidesig[Q_n/\bar{R}]{\mathcal{S}'}\boxempty\bigbox_{X\subseteq R}\eslidesig{\supersat[n-r]}, 
\]
and the result now follows by Corollary~\ref{cor:Esupersat}. 
\end{proof}

\section{Strictly reducible signatures are disconnected}
\label{sec:disconnected}

Our last major result is that the edge slide graph of a strictly reducible signature is disconnected:

\begin{theorem}
\label{thm:disconnected}
Let $\mathcal{S}=(a_1, \dots, a_n)$ be a strictly reducible signature of $Q_n$.  Then the edge slide graph $\mathcal{E}(\mathcal{S})$ is disconnected.
\end{theorem}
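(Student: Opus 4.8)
The plan is to reduce to the case where $\mathcal{S}$ is ordered and unsaturated, and then to exhibit two spanning trees with signature $\mathcal{S}$ lying in different components of $\eslide{n}$. Since a permutation of $[n]$ induces an automorphism of $Q_n$ and hence an isomorphism of edge slide graphs, I may assume $\mathcal{S}$ is ordered. If $\mathcal{S}$ is saturated, let $s$ be the index with $\unsat(\mathcal{S})=(a_1,\dots,a_s)=:\mathcal{S}'$ and put $R=[s]$; then Corollary~\ref{cor:Esat} gives $\eslidesig{\mathcal{S}}\cong\eslidesig[\contractb{R}]{\mathcal{S}'}\boxempty Q_N$. A Cartesian product is connected if and only if both factors are, so since $Q_N$ is connected, $\eslidesig{\mathcal{S}}$ is disconnected if and only if $\eslidesig[\contractb{R}]{\mathcal{S}'}$ is, which by Observation~\ref{obs:contraction} happens if and only if $\eslidesig{\mathcal{S}'}$ is disconnected. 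As $\mathcal{S}$ is strictly reducible, $\mathcal{S}'=\unsat(\mathcal{S})$ is reducible, and it is unsaturated by Definition~\ref{defn:unsaturatedpart}; so it suffices to treat an ordered, unsaturated, reducible signature. (If $\mathcal{S}$ is already unsaturated then ``strictly reducible'' just means ``reducible'' and no reduction is needed.)

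So suppose $\mathcal{S}$ is ordered, unsaturated, and reduces over $R=[r]$; set $m=n-r$. Unsaturatedness forces $\excess{n-1}{\mathcal{S}}\geq 1$, hence $a_n\leq 2^{n-1}-1$ and $r\neq n-1$, so $m\geq 2$. By Theorems~\ref{thm:rtrees-bijection} and~\ref{thm:redR} the bijection $\Psi_R$ identifies $\eslidesig{\mathcal{S}}$ with the tuples $\bigl(T_R,(T^X)_{X\subseteq R}\bigr)$ having $\sig(T_R)=\mathcal{S}|_R$ and $\sum_{X\subseteq R}\sig(T^X)=\mathcal{S}|_{[n]-R}$ (Observation~\ref{obs:productsignature}), and by Corollary~\ref{cor:invariantsubsignatures} the tuple $\bigl(\sig(T(R,X))\bigr)_{X\subseteq R}$ is constant on each connected component. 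Thus it suffices to produce two such tuples of sub-signatures that differ; equivalently, to exhibit two distinct functions $X\mapsto\mathcal{S}_X$ from $\power{R}$ to $\Sig(Q_m)$ with $\sum_{X\subseteq R}\mathcal{S}_X=\mathcal{S}|_{[n]-R}$. At least one such decomposition exists (take any tree of signature $\mathcal{S}$), and if it were the only one it would have to be constant, since otherwise swapping the values at two indices where a nonconstant decomposition differs yields a second one. A constant decomposition forces $\mathcal{S}_X=\mathcal{C}$ for all $X$, where $\mathcal{C}$ is the ordered signature $\tfrac{1}{2^r}\,\mathcal{S}|_{[n]-R}$ of $Q_m$, whose top entry is $a_n/2^r\leq 2^{m-1}-1$.

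It remains to prove the combinatorial heart of the argument: if $\mathcal{C}=(c_1,\dots,c_m)$ is an ordered signature of $Q_m$ ($m\geq 2$) with $c_m\leq 2^{m-1}-1$, then there exist distinct signatures $\alpha\neq\beta$ of $Q_m$ with $\alpha+\beta=2\mathcal{C}$. Given these, placing $\alpha,\beta$ at two distinct subsets of $R$ (which exist as $2^r\geq 2$) and $\mathcal{C}$ elsewhere yields a nonconstant decomposition, the desired contradiction. I would construct $\alpha$ and $\beta$ by a single ``unit move'': let $\alpha$ be $\mathcal{C}$ with its top entry increased by one and entry $j$ decreased by one, and $\beta$ the reverse, so that $\alpha+\beta=2\mathcal{C}$ and $\alpha\neq\beta$ automatically. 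Here $j=k^\ast+1$, where $k^\ast$ is the largest index $<m$ with $\excess{k^\ast}{\mathcal{C}}=0$ (and $k^\ast=0$ if none). The bound $c_m\leq 2^{m-1}-1$ gives $\excess{m-1}{\mathcal{C}}\geq 1$, so $k^\ast\leq m-2$ and $j\leq m-1$. The key facts, both following from $\excess{k^\ast}{\mathcal{C}}=0$ together with $\excess{k^\ast+1}{\mathcal{C}}\geq 1$, are that $c_j\geq 2$ (so the decreased entry stays positive) and that decreasing entry $j$ lowers the smallest-$k$ sum only for $k\geq j$. Checking that $\alpha$ and $\beta$ satisfy the Hall/partial-sum condition of Theorem~\ref{thm:characterisation} is the main obstacle: for $\beta$ the move shifts weight to a smaller-indexed coordinate and so only raises the smallest-$k$ sums, making it immediate; for $\alpha$ the smallest-$k$ sums drop by one exactly for $k\geq j$, and the choice of $j$ guarantees $\excess{k}{\mathcal{C}}\geq 1$ there, so each stays at least $2^k-1$. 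This is precisely where the bound coming from unsaturatedness is essential.
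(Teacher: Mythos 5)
Your proposal is correct, and at the top level it follows the same strategy as the paper's proof: use Corollary~\ref{cor:invariantsubsignatures} together with the bijection of Theorem~\ref{thm:rtrees-bijection} to reduce the problem to exhibiting two \emph{distinct} assignments $X\mapsto\mathcal{S}_X$ of sub-signatures summing to $\mathcal{S}|_{[n]-R}$, and obtain these by a $\pm1$ perturbation of the common signature in the constant case. The two realisations of this strategy differ in genuinely instructive ways, however. The paper (Lemma~\ref{lem:differentsubsignatures}) never leaves $Q_n$; instead it chooses the reducing set carefully, taking $R=[r]$ with $r$ the largest index such that $\unsat(\mathcal{S})$ reduces over $[r]$, which forces $e_{r+1}\geq 2$ and permits the perturbation $(e_{r+1}\mp1,e_{r+2}\pm1,\dots)$ of the \emph{first two} entries, verified by a convexity argument. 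You instead first pass to $\unsat(\mathcal{S})$ via Corollary~\ref{cor:Esat} and Observation~\ref{obs:contraction} (in effect re-deriving the reduction of Theorem~\ref{thm:connectivity-saturated}), after which \emph{any} reducing set of the unsaturated signature works, and you perturb entry $j=k^\ast+1$ against the \emph{last} entry, using unsaturatedness to keep $c_m+1\leq 2^{m-1}$. Your verification for $\alpha$ is sound: for $k<j$ one has $c_j-1\geq 2^{k^\ast}>c_k$, so the $k$ smallest entries are unchanged, while for $j\leq k<m$ the maximality of $k^\ast$ gives $\excess{k}{\mathcal{C}}\geq 1$, absorbing the drop of one.

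The one step asserted too quickly is the claim that the signature condition for $\beta$ is ``immediate'' because weight moves to a smaller-indexed coordinate. Since $\beta$ need not be ordered, the relevant quantity is $\min_{|K|=k}\sum_{i\in K}\beta_i$, not the initial-segment sum, and this minimum \emph{can} drop below $\min_{|K|=k}\sum_{i\in K}c_i$ when a minimising $K$ contains $m$ (whose entry decreased) but not $j$. The claim is nevertheless true and the fix is short: if $\excess{k}{\mathcal{C}}\geq 1$ the minimum can afford to drop by one; and if $\excess{k}{\mathcal{C}}=0$ with $k<m$, then $c_k\leq 2^k-k$ while $c_m\geq(2^m-2^k)/(m-k)>c_k$ (the case $m-k=1$ being excluded by $c_m\leq 2^{m-1}-1$), so $c_m-1\geq c_k$ and any $k$-subset containing $m$ still sums to at least $\sum_{i=1}^k c_i=2^k-1$. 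With that patch, and the routine observation that all entries of $\alpha$ and $\beta$ remain in $[1,2^{m-1}]$, your argument is complete and gives a valid alternative to Lemma~\ref{lem:differentsubsignatures}.
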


The reason underlying Theorem~\ref{thm:disconnected} is illustrated by the edge slide graph of the strictly reducible signature $(1,3,3)$, which we saw in Example~\ref{ex:red31} breaks into two components: one consisting of the trees $T$ such that $\sig(T(\{1\},\emptyset))=(1,2)$, and a second consisting of the trees $T$ such that $\sig(T(\{1\},\emptyset))=(2,1)$. 
Recall that by Corollary~\ref{cor:invariantsubsignatures}, if $T$ reduces over $R$, then for all $X\subseteq R$ the signature $\sig(T(R,X))$ is an invariant of the connected component of \eslide{n}\ containing $T$. Thus, we can show that \eslidesig{\mathcal{S}}\ is disconnected by showing there exist $X\subseteq R$ and trees $T$ and $T'$ with signature $\mathcal{S}$ such that $\sig(T(R,X))\neq\sig(T'(R,X))$. 

To find the required trees $T$, $T'$ it suffices to prove the existence of \emph{single} tree $T$ for which there are subsets $X,Y\subseteq R$ such that $\sig(T(R,X))\neq\sig(T(R,Y))$: given such a tree, we may obtain $T'$ by simply exchanging $T(R,X)$ and $T(R,Y)$. In Lemma~\ref{lem:differentsubsignatures} we prove the existence of such a tree, for a suitable choice of reducing set $R$.

\begin{lemma}
\label{lem:differentsubsignatures}
Let $\mathcal{S}= (a_1, \dots, a_n)$ be an ordered strictly reducible signature with unsaturated part $\mathcal{S}'=(a_1, \dots, a_s)$, and let $r<s-1$ be such that $\mathcal{S}'$ reduces over $[r]$ but not $[r+1]$. Let $R=[r]$.
 Then for any distinct $X, Y\subseteq  R$, there exists a spanning tree $T$ of $Q_n$ with signature $\mathcal{S}$ such that $T(R,X)$ and $T(R,Y)$ have different signatures. 
\end{lemma}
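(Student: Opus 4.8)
The plan is to reduce the statement, via the decomposition machinery of Section~\ref{sec:decompose}, to a purely combinatorial splitting problem about the ``upper'' part $\mathbf{c} := \mathcal{S}|_{[n]-R} = (a_{r+1},\dots,a_n)$ of the signature, and then to exhibit a non-uniform split using the surplus forced in direction $r+1$. By Theorem~\ref{thm:rtrees-bijection} and Observation~\ref{obs:productsignature}, producing a spanning tree $T$ of $Q_n$ with $\sig(T)=\mathcal{S}$ reducing over $R=[r]$ is the same as choosing a spanning tree $T_R$ of \contractb{R}\ with $\sig(T_R)=\mathcal{S}|_R=(a_1,\dots,a_r)$ together with spanning trees $T^Z$ of $Q_n(R,Z)\cong Q_{n-r}$ for $Z\subseteq R$ whose signatures sum to $\mathbf{c}$; moreover $T(R,Z)=T^Z$. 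A suitable $T_R$ exists because $(a_1,\dots,a_r)$ is a signature of $Q_r$ (as $\mathcal{S}$ reduces over $[r]$) and lifts to \contractb{R}\ under an arbitrary choice of labels. Since $X,Y$ are two of the $2^r$ subsets and the other pieces are unconstrained, the lemma reduces to the following: there exist $2^r$ signatures of $Q_m$ (with $m:=n-r\geq 2$) summing to $\mathbf{c}$ that are \emph{not all equal}, for then I may assign two distinct ones to $X$ and $Y$ and realise the tree via $\Psi_R^{-1}$.

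To set this up, first note that $\mathcal{S}'$ reducing over $[r]$ gives $\sum_{i=1}^r a_i = 2^r-1$, so $\sum_{i=r+1}^n a_i = 2^r(2^m-1)$, which is exactly the total of $2^r$ signatures of $Q_m$; and a valid split exists (apply $\Psi_R$ to any tree realising $\mathcal{S}$, which reduces over $R$ by Theorem~\ref{thm:reducible-subtrees}). The key input is that $\mathcal{S}'$ does \emph{not} reduce over $[r+1]$, so $\excess{r+1}{\mathcal{S}'}\geq 1$, and combined with $\excess{r}{\mathcal{S}'}=0$ this forces $a_{r+1}\geq 2^r+1$: the number of edges in direction $r+1$ strictly exceeds the number $2^r$ of subcubes. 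I would then split into two cases. In \emph{Case (A)}, if $\mathbf{c}/2^r$ is not a signature of $Q_m$ (in particular if some $a_{r+j}$ is not divisible by $2^r$), then no split can be uniform, so the split already obtained is non-uniform and we are done.

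In \emph{Case (B)}, $\mathbf{b}:=\mathbf{c}/2^r=(b_1,\dots,b_m)$ is a signature of $Q_m$; it is ordered (since $\mathbf{c}$ is) and $b_1=a_{r+1}/2^r\geq 2$, which forces $m\geq 3$ (the only ordered signatures of $Q_1,Q_2$ are $(1)$ and $(1,2)$). Here I would perturb $\mathbf{b}$ into the two tuples
\[
\mathbf{p}=(b_1-1,\,b_2+1,\,b_3,\dots,b_m),\qquad
\mathbf{q}=(b_1+1,\,b_2-1,\,b_3,\dots,b_m),
\]
which are distinct, have all entries in $\{1,\dots,2^{m-1}\}$ (using $b_1\geq 2$ and $b_2\leq 2^{m-1}-1$, the latter following from the total $\sum b_i=2^m-1$ together with $m\geq 3$), and satisfy $\mathbf{p}+\mathbf{q}=2\mathbf{b}$. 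The split consisting of $\mathbf{p}$, $\mathbf{q}$, and $2^r-2$ copies of $\mathbf{b}$ then sums to $\mathbf{c}$ and is non-uniform.

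The hard part will be verifying that $\mathbf{p}$ and $\mathbf{q}$ are genuinely signatures of $Q_m$, i.e.\ that every $k$-element subset of their entries sums to at least $2^k-1$ (Theorem~\ref{thm:characterisation}). Moving one unit between the two smallest coordinates can pull a sum of $k$ smallest entries below $2^k-1$ only in an extremal configuration: $\mathbf{b}$ must be tight at some $k<m$ (its $k$ smallest entries summing to exactly $2^k-1$) \emph{and} the coordinates involved through index $k+1$ must all be equal. I expect the crux to be ruling this out, which I would do using $\mathbf{b}$'s own signature condition at index $k+1\leq m$: the flat-and-tight configuration would force the common value to be at least $2^k$, contradicting tightness at $k$. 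Once $\mathbf{p},\mathbf{q}$ are known to be valid signatures, assigning them to $X$ and $Y$ (and distributing the remaining pieces arbitrarily), choosing trees $T^Z$ with the prescribed signatures and a tree $T_R$ with signature $(a_1,\dots,a_r)$, and applying $\Psi_R^{-1}$ yields a spanning tree $T$ of $Q_n$ with $\sig(T)=\mathcal{S}$ and $\sig(T(R,X))=\mathbf{p}\neq\mathbf{q}=\sig(T(R,Y))$, as required.
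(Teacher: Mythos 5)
Your proposal is correct and follows essentially the same route as the paper: both reduce the problem to producing a non-uniform assignment of subtree signatures, and both do so by transferring a single unit between the first two coordinates of the common signature $\mathcal{U}=\mathbf{c}/2^r$, using $a_{r+1}>2^r$ (forced by non-reduction over $[r+1]$) to guarantee the perturbation stays positive. The only real difference is in verifying that the two perturbed tuples satisfy the signature condition --- the paper runs a case analysis with a convexity argument, while your tightness-plus-flatness analysis (the condition at $k+1$ forces $b_{k+1}\geq 2^k$, contradicting tightness at $k$) is a correct and somewhat cleaner alternative.
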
 

\begin{remark}
\label{rem:choose.r}
Note that $r$ as required above necessarily exists.  Since $\mathcal{S}'$ is reducible it reduces over $[t]$ for some $t<s$, and since it is unsaturated it does not reduce over $[s-1]$. Thus we may for instance take $r$ to be
the largest integer $t<s$ such that $\mathcal{S}'$ reduces over $[t]$.
\end{remark}

\begin{proof}[Proof of Lemma~\ref{lem:differentsubsignatures}]
Let $T$ be a spanning tree of $Q_n$ with signature $\mathcal{S}$. If there exists $Z\subseteq R$ such that $T(R,X)$ and $T(R,Z)$ have different signatures then we can construct the required tree by (if necessary) swapping $T(R,Y)$ and $T(R,Z)$. So suppose that this is not the case. Then the subtrees $T(R,Z)$ have the same signature for all $Z\subseteq R$. Let $\mathcal{U}=(e_{r+1}, \dots, e_n)$ be this common signature. 

For any $i\in\{r+1,\ldots,n\}$ each edge of $T$ in direction $i$ lies in $T(R,Z)$ for some $Z\subseteq R$, and since each such tree contains $e_i$ edges in direction $i$ we have $a_i = 2^re_i$. It follows that $\mathcal{U}$ is ordered. 
We begin by showing under our choice of $r$ that $e_{r+1}\geq 2$.
Suppose to the contrary that $e_{r+1}=1$. Then $a_{r+1}=2^r$, and consequently
\[
\sum_{i=1}^{r+1} a_i = \sum_{i=1}^r a_i + a_{r+1} = 2^r-1+2^r=2^{r+1}-1,
\]
so $\mathcal{S}'$ reduces over $[r+1]$. This contradicts the choice of $r$, so we must have $e_{r+1}\geq 2$ as claimed, which then  forces $e_{r+2}\geq 2$ also because $\mathcal{U}$ is ordered.

Consider 
\begin{align*}
\mathcal{U}_1 &=(e_{r+1}-1, e_{r+2}+1, e_{r+3}, \dots, e_n),\\
\mathcal{U}_2 &=(e_{r+1}+1, e_{r+2}-1, e_{r+3}, \dots, e_n),
\end{align*}
and note that $\mathcal{U}_1+ \mathcal{U}_2=2\mathcal{U}$. 
We show that $\mathcal{U}_1$ and $\mathcal{U}_2$ are signatures of $Q_{n-r}$, so  there exist spanning trees $T_1$ and $T_2$ of $Q_{n-r}$ with signatures $\mathcal{U}_1$ and $\mathcal{U}_2$ respectively. For simplicity, we let $f_i=e_{r+i}$ for $i=1, 2, \dots, n-r$. We consider  $\mathcal{U}_1$ and $\mathcal{U}_2$ separately.

For $\mathcal{U}_1$, we distinguish the following cases according to whether or not $f_2< f_3$.
\begin{enumerate} 
\item Suppose $f_1\leq f_2< f_3\leq \dots \leq f_{n-r}$. Then we have $f_1-1< f_{2}+1\leq f_3$. Write
\[
\mathcal{U}_1=(f_1-1, f_2+1, f_3, \dots, f_{n-r})= (f_{1}', f_{2}', f_{3}', \dots, f_{n-r}').
\]
Then
\[
f_{1}'< f_{2}'\leq f_{3}'\leq \dots \leq f'_{n-r}
\]
is in nondecreasing order; and  
\[
\sum_{i=1}^k f'_i=
\begin{cases}
 f_1-1\geq 1,  &\text{for $k=1$},\\
\sum_{i=1}^k f_i\geq 2^k-1,                 &\text{for $2\leq k\leq n-r$},
\end{cases}
\]
with equality in the second case when $k=n-r$.   
We conclude that $\mathcal{U}_1$ is a signature of $Q_{n-r}$, by Theorem~\ref{thm:characterisation}.

\item
Suppose $f_{1}\leq f_{2}= f_{3}= \dots = f_{p}< f_{p+1}\leq \dots\leq f_{n-r}$ for some $p$, with $3\leq p\leq n-r$. Then we have 
\[
f_{1}-1< f_{3}=\dots =f_{p}< f_{2}+1\leq f_{p+1}\leq \dots \leq f_{n-r}.
\]
Let  
\[
\mathcal{U}_1'= (f_{1}', f_2',\dots, f_{p}', f_{p+1}', \dots, f_{n-r}')=(f_{1}-1, f_{3},\dots, f_p, f_{2}+1, f_{p+1}, \dots, f_{n-r}).
\]
Then $\mathcal{U}_1'$ is an ordered permutation of $\mathcal{U}_1$, so it suffices to show that $\mathcal{U}_1'$ is a signature. The only sums $\sum_{i=1}^k f_i'$ that are not equal to the corresponding sum $\sum_{i=1}^k f_i$ are the sums
\[
\sum_{i=1}^j f_i'= f_{1}-1+ (j-1) f_2,
\]
 
for $1\leq j< p$. We therefore consider the value of $f_{1}-1+ (j-1) f_2$ for $1\leq j< p$. For all $1\leq y\leq p$ let  
\[
f(y)=\sum_{i=1}^y f_i=f_{1}+(y-1)f_{2},
\]
and let 
\[
g(y)=2^y-1.
\]
Then 
\[
f(1)= f_{1}\geq 2 > 1=g(1),
\]
and 
\[
f(p)=\sum_{i=1}^p f_i\geq 2^p-1=g(p).
\]
To verify that $\mathcal{U}_1$ is a signature of $Q_{n-r}$, it remains to show that $g(j)< f(j)$, for all $1<j<p$. Since $g$ is convex, for any $0\leq t\leq 1$, we have
\[
g((1-t)+tp)\leq (1-t)g(1)+t g(p).
\]

Let $t=\frac{j-1}{p-1}$.  Then for $1\leq j\leq p$ we have $0\leq t\leq 1$ and $1-t+tp = j$. So 
\begin{align*}
g(j)  &\leq \left(1-\frac{j-1}{p-1}\right) g(1)+ \frac{j-1}{p-1} g(p)\\
&< \left(1-\frac{j-1}{p-1}\right) f(1) +\frac{j-1}{p-1} f(p) \\
&=\left(1-\frac{j-1}{p-1}\right) f_{1}+\frac{j-1}{p-1}(f_{1}+(p-1)f_{2})\\
&=f_{1}+(j-1)f_{2}=f(j).
\end{align*}
Therefore
\[
\sum_{i=1}^j f_i'= \sum_{i=1}^j f_i-1=f(j)-1\geq 2^j-1,
\]
showing that $\mathcal{U}_1$ satisfies the signature condition. 
\end{enumerate} 

For $\mathcal{U}_2$, we consider the following cases according to whether $f_2=f_1,$ $f_2=f_1+1$ or $f_2> f_1+1$.
\begin{enumerate} 
\item If $f_2=f_1$, then $\mathcal{U}_2$ is the permutation of $\mathcal{U}_1$ obtained by swapping the first two entries. Therefore $\mathcal{U}_2$ is a signature of $Q_{n-r}$.   

\item If $f_{2}=f_{1}+1$, then $\mathcal{U}_2$ is the permutation of $\mathcal{U}$ obtained by swapping the first two entries. Therefore $\mathcal{U}_2$ is a signature of $Q_{n-r}$. 

\item If $f_{2}> f_{1}+1$, then $f_{1}+1\leq f_{2}-1$. Let 
\[
\mathcal{U}_2=(f_{1}+1, f_{2}-1, f_{3}, \dots, f_{n-r})= (f_{1}'', f_{2}'', f_{3}'', \dots, f_{n-r}'').
\]
Then
\[
f_{1}''\leq f_{2}''< f_{3}''\leq \dots \leq f''_n
\]
is in nondecreasing order; and  
\[
\sum_{i=1}^k f''_i=
\begin{cases}
f_1+1\geq 3 &\text{for $k=1$};\\
\sum_{i=1}^k f_i\geq 2^k-1,                 &\text{for $2\leq k\leq n-r$},
\end{cases} 
\]
with equality in the second case when $k=n-r$.     
Therefore the signature condition is satisfied and we conclude that $\mathcal{U}_2$ is a signature of $Q_{n-r}$. 
\end{enumerate} 

Since $\mathcal{U}_1$ and $\mathcal{U}_2$ are signatures of $Q_{n-r}$, there are spanning trees $T_1$ and $T_2$ of $Q_{n-r}$ with signatures $\mathcal{U}_1$ and $\mathcal{U}_2$ respectively. Let $T'$ be the tree obtained from $T$ by replacing $T(R,X)$ with $T_1$,  and $T(R,Y)$ with $T_2$. Then $T'$ has signature $\mathcal{S}$, and the subtrees  $T'(R,X)$ and $T'(R,Y)$ have different signatures, as required.
\end{proof}

We now have everything we require to prove Theorem~\ref{thm:disconnected}.

\begin{proof}[Proof of Theorem~\ref{thm:disconnected}]
Without loss of generality, we may assume $\mathcal{S}$ is ordered with unsaturated part $\mathcal{S}'=(a_1, \dots, a_s)$. Let $1\leq r<s$ be the largest integer such that $\mathcal{S}'$ reduces over $R=[r]$, and choose distinct $X,Y\subseteq R$. By Remark~\ref{rem:choose.r} and Lemma~\ref{lem:differentsubsignatures}, there exists a spanning tree $T$ of $Q_n$ with signature $\mathcal{S}$ such that the subtrees $T(R,X)$ and $T(R,Y)$ have different signatures. Let $T'$ be the spanning tree obtained from $T$ by swapping $T(R,X)$ and $T(R,Y)$. Since the signatures of $T(R,X)$ and $T(R,Y)$ are invariant under edge slides by Corollary~\ref{cor:invariantsubsignatures}, the trees $T$ and $T'$ lie in different components of $\eslidesig{\mathcal{S}}$. It follows that $\eslidesig{\mathcal{S}}$ is disconnected, as claimed. 
\end{proof}

\section{Discussion}
\label{sec:discussion}

Theorem~\ref{thm:disconnected} shows that strict reducibility is an obstruction to being connected. We conjecture that this is the only obstruction to connectivity:

\begin{conjecture}
\label{conj:connectivity}
Let $\mathcal{S}=(a_1, \dots, a_n)$ be a signature of $Q_n$. Then the edge slide graph $\eslidesig{\mathcal{S}}$ is connected if and only if $\mathcal{S}$ is irreducible or quasi-irreducible.
\end{conjecture}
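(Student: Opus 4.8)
The conjecture is a biconditional, and one direction is already available. The ``only if'' direction follows by contraposition from Theorem~\ref{thm:disconnected}: if $\eslidesig{\mathcal{S}}$ is connected then $\mathcal{S}$ cannot be strictly reducible, and since every signature is exactly one of irreducible, quasi-irreducible, or strictly reducible, it must be irreducible or quasi-irreducible. So the entire content of the conjecture lies in the converse, namely that every irreducible or quasi-irreducible signature is connected. The plan is to dispose of the quasi-irreducible case by reducing it to the irreducible case, and then to focus all remaining effort on irreducible signatures.

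For the reduction, first note that a quasi-irreducible signature is necessarily saturated: if it were unsaturated then its unsaturated part would be itself, forcing it to be irreducible rather than reducible. So let $\mathcal{S}$ be quasi-irreducible, ordered, and saturated above direction $s$, where $\unsat(\mathcal{S})=(a_1,\dots,a_s)$ is irreducible and $s<n$. Taking $R=[s]$, Corollary~\ref{cor:Esat} gives
\[
\eslidesig{\mathcal{S}} \cong \eslidesig[\contractb{R}]{\unsat(\mathcal{S})} \boxempty Q_N .
\]
Since $Q_N$ is connected and a Cartesian product with a connected graph is connected if and only if the other factor is, $\eslidesig{\mathcal{S}}$ is connected precisely when $\eslidesig[\contractb{R}]{\unsat(\mathcal{S})}$ is. By Observation~\ref{obs:contraction} the latter is connected if and only if $\eslidesig{\unsat(\mathcal{S})}$ is, and $\unsat(\mathcal{S})$ is irreducible. (This reduction is essentially Theorem~\ref{thm:connectivity-saturated}.) Thus the quasi-irreducible case follows once the irreducible case is settled, and it remains to prove that every irreducible signature is connected.

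For the irreducible case, the plan is to pass to upright trees and then connect arbitrary pairs. By Tuffley~\cite[Cor.~15]{tuffley-2012} every spanning tree of $Q_n$ is joined by edge slides to an upright tree, and since edge slides preserve the signature it suffices to show that any two upright trees with the same irreducible signature $\mathcal{S}$ lie in the same component of $\eslidesig{\mathcal{S}}$. Identifying upright trees with sections of $\powernon{n}$, I would argue by induction on $n$, with the base case handled by Henden~\cite{henden-2011}, who determined the structure of $\eslidesig{2,2,3}$ (the only irreducible signature of $Q_3$). The engine of the inductive step is the flexibility guaranteed by irreducibility: Corollary~\ref{cor:specify1} realises any single prescribed value $\psi(X)=x$, and Theorem~\ref{thm:specifytwo} prescribes two values under mild hypotheses. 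The idea is to fix a canonical target section and to drive an arbitrary section toward it by edge slides, matching its values vertex by vertex (say in decreasing order of $|X|$), using the specify-value results to guarantee that each local adjustment can be carried out without disturbing the values already matched.

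The main obstacle, and the reason this remains a conjecture, is exactly the interaction between edge slides and uprightness in the irreducible case. The specify-value lemmas assert the \emph{existence} of upright trees with prescribed section values, but they do not by themselves supply an edge slide \emph{path} between two such trees; bridging this gap is the crux. The difficulty is sharpest when the excess is minimal: irreducibility only guarantees $\excess{k}{\mathcal{S}}\geq 1$, so Theorem~\ref{thm:excess-ell} freely fixes only one value at a time, and although Theorem~\ref{thm:specifytwo} and Lemma~\ref{lem:a_i=a_{i+1}} recover a second degree of freedom under favourable conditions, a greedy top-down straightening cannot in general keep all previously matched values frozen. Controlling these low-excess configurations, and in particular showing that the required slides can always be routed through intermediate (possibly non-upright) trees without altering the signature, is where the real work lies, and where an idea genuinely beyond the matching-theoretic arguments of Section~\ref{sec:upright-irreducible} appears to be needed.
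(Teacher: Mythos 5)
This statement is a conjecture, and the paper does not prove it either; your proposal correctly establishes exactly the parts the paper establishes and honestly identifies the genuinely open part, so there is nothing to fault beyond the fact that---like the paper---it is not a proof. Your ``only if'' direction is precisely Theorem~\ref{thm:disconnected}, and your reduction of the quasi-irreducible case to the irreducible case recovers the paper's Theorem~\ref{thm:connectivity-saturated} by a mildly different route: you invoke the product decomposition of Corollary~\ref{cor:Esat} together with Observation~\ref{obs:contraction} and the fact that a Cartesian product with a connected graph is connected if and only if the other factor is, whereas the paper argues directly by contracting one saturated direction at a time; both are valid, and your observation that a quasi-irreducible signature is necessarily saturated is correct. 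Your sketch for the irreducible case (pass to upright trees via Tuffley's result, induct on $n$ with Henden's $\eslidesig{2,2,3}$ as base, and use the specify-value results of Section~\ref{sec:upright-irreducible}) is essentially the strategy pursued in Al Fran's thesis~\cite{alfran-2017}, and your diagnosis of the obstruction is accurate: Corollary~\ref{cor:specify1} and Theorem~\ref{thm:specifytwo} give \emph{existence} of upright trees with prescribed section values but not edge slide \emph{paths} between them, and the minimal-excess configurations are exactly where the known partial results (irreducibility for $n\leq 4$, unidirectional splitting signatures, and the two infinite families) stop short of a general argument.
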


The ``only if'' direction of Conjecture~\ref{conj:connectivity} is Theorem~\ref{thm:disconnected}. As discussed below the ``if'' direction is known to be true for $n\leq 4$, for a certain class of irreducible signatures of $Q_5$, and for two infinite families of irreducible signatures. 
If true, the conjecture together with Observation~\ref{obs:contraction} and Theorem~\ref{thm:redR} would show that connected components of the edge slide graph of $Q_n$ are characterised in terms of signatures of spanning trees of subcubes of $Q_n$. 
We show below in Theorem~\ref{thm:connectivity-saturated} that it suffices to consider the case where $\mathcal{S}$ is irreducible only. 

The cases $n=1$ and $n=2$ are trivial. For $n\geq 3$ a useful approach is to reduce the problem to studying upright trees. By Tuffley~\cite[Cor.~15]{tuffley-2012} every tree is connected to an upright tree by a sequence of edge slides, so it suffices to show that every upright tree with signature $\mathcal{S}$ lies in a single component. Up to permutation there is a unique irreducible signature $(2,2,3)$ of $Q_3$, and using this approach it is straightforward to show that $\eslidesig{2,2,3}$ is connected. This is done by Henden~\cite{henden-2011}, who also determines the complete structure of $\eslidesig{2,2,3}$. 

For $n\geq 4$ the first author's doctoral thesis~\cite{alfran-2017}, completed under the supervision of the second and third authors, makes substantial partial progress towards an inductive proof of the conjecture. Al Fran~\cite[Defn~5.3.1]{alfran-2017} introduces the notion of a \emph{splitting signature} of $\mathcal{S}$ with respect to $n$. This is a signature $\mathcal{D}$ of $Q_{n-1}$ such that there exists an upright spanning tree $T$ of $Q_n$ such that $\sig(T)=\mathcal{S}$ and $\sig(T\cap Q_{n-1})=\mathcal{D}$.  As the culmination of a series of results Al Fran proves the following:

\begin{theorem}[Al Fran~{\cite[Thm~11.1]{alfran-2017}}]
\label{thm:reduction}
Let $n\geq 4$ and let $\mathcal{I}$ be an ordered irreducible signature of $Q_n$. Suppose that every irreducible signature of $Q_k$ is connected for all $k<n$. Suppose that $\mathcal{I}$ has an ordered irreducible splitting signature $\mathcal{D}$ with respect to $n$ such that every upright spanning tree with signature $\mathcal{I}$ and splitting signature $\mathcal{D}$ lies in a single component of $\eslide{n}$. Then the edge slide graph $\eslidesig{\mathcal{I}}$ is connected. 
\end{theorem}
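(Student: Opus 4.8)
The plan is to prove connectivity by the usual reduction to upright trees, together with an inductive argument that drives the \emph{splitting signature} of an upright tree to the distinguished value $\mathcal{D}$. First I would invoke Tuffley~\cite[Cor.~15]{tuffley-2012}: every spanning tree of $Q_n$ is joined by a sequence of edge slides to an upright spanning tree, and the signature is invariant under edge slides. Hence $\eslidesig{\mathcal{I}}$ is connected if and only if all upright spanning trees with signature $\mathcal{I}$ lie in a single component of $\eslide{n}$. Since the hypothesis already guarantees that the upright trees with signature $\mathcal{I}$ and splitting signature $\mathcal{D}$ lie in one component, it suffices to prove that every upright spanning tree $T$ with $\sig(T)=\mathcal{I}$ can be joined by edge slides to an upright spanning tree with signature $\mathcal{I}$ and splitting signature $\mathcal{D}$; the intermediate trees need not be upright.

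Throughout I would work with the face decomposition at $R=\{n\}$: write $T_\flat = T\cap Q_{[n-1]}$ for the bottom face and note that, because $T$ is upright, $T_\flat$ is an upright spanning tree of $Q_{[n-1]}$ with $\sig(T_\flat)=\mathcal{D}'$, the current splitting signature; the remaining $2^{n-1}$ edges (those in direction $n$ together with the horizontal edges of the top face) form a spanning tree of the graph obtained by contracting $T_\flat$ to a point. A first key observation, proved directly from Lemma~\ref{lem:slidable}, is that sliding a bottom-face edge in a direction belonging to $[n-1]$ keeps the edge in the bottom face and is a valid slide in $T$ if and only if it is a valid slide of $T_\flat$ in $Q_{[n-1]}$, because the fundamental cycle of the replacing edge lies entirely inside the spanning tree $T_\flat$. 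Consequently, whenever $\mathcal{D}'$ is irreducible, the inductive hypothesis (every irreducible signature of a lower-dimensional cube is connected) lets me rearrange $T_\flat$, by slides internal to the bottom face, into \emph{any} spanning tree of $Q_{[n-1]}$ with signature $\mathcal{D}'$, while leaving the direction-$n$ and top-face edges of $T$ untouched. The analogous analysis of the contracted top/apex structure gives comparable freedom there.

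The heart of the argument is to change the splitting signature from $\mathcal{D}'$ to $\mathcal{D}$. The plan is to realise elementary moves that transfer a single edge between the bottom and top faces in a fixed direction (a slide in direction $n$ of a suitable horizontal edge), thereby decrementing one entry of $\mathcal{D}'$ and incrementing the complementary top-face count, and to chain such moves along a path of splitting signatures running from $\mathcal{D}'$ to $\mathcal{D}$. Each elementary move must be \emph{set up}: using the bottom-face and top-face freedom above, together with the value-prescribing results Corollary~\ref{cor:specify1} and Theorem~\ref{thm:specifytwo}, I would position the two edges to be exchanged on a common $2$--face so that the direction-$n$ slide becomes available by Lemma~\ref{lem:slidable}. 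That both $\mathcal{D}'$ and $\mathcal{D}$ are genuine splitting signatures of $\mathcal{I}$, and that the intermediate tuples can be kept within the range permitted by Theorem~\ref{thm:characterisation}, is what should make such a path of splitting signatures exist.

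The main obstacle is exactly this last step: showing that $\mathcal{D}'$ can be routed to $\mathcal{D}$ through splitting signatures that remain \emph{irreducible}, so that the inductive hypothesis stays applicable at every stage, while simultaneously guaranteeing that each face-swapping slide is realisable. Splitting signatures that are reducible, or that sit at the boundary of the admissible region (where some entry is extremal), are the delicate cases: there one cannot freely rearrange a face and the exchange move must be engineered by hand rather than inherited from the induction. Once the routing is established, $T$ is joined by edge slides to an upright tree with splitting signature $\mathcal{D}$, and the hypothesis on the $\mathcal{D}$-component then places $T$ in the single component, completing the proof.
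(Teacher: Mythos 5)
Your overall strategy --- reduce to upright trees via Tuffley's result, then show that every upright tree with signature $\mathcal{I}$ can be slid to one with splitting signature $\mathcal{D}$, and finish by invoking the hypothesis on the $\mathcal{D}$-component --- is the natural one, and your preliminary reductions are sound. Note, however, that the paper itself does not prove Theorem~\ref{thm:reduction}: it is imported from Al Fran's thesis, where it is described as ``the culmination of a series of results'', so there is no in-paper proof to compare against; the comparison below is therefore against what a complete proof must contain.

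The difficulty is that your proposal stops exactly where the theorem's real content begins, and you say so yourself. The claim that the splitting signature can be routed from an arbitrary $\mathcal{D}'$ to the distinguished $\mathcal{D}$ --- that suitable exchange moves between the top and bottom faces exist, can be set up by slides internal to the faces using Corollary~\ref{cor:specify1} and Theorem~\ref{thm:specifytwo}, and can be chained through admissible intermediate splitting signatures --- is asserted as a plan (``is what should make such a path of splitting signatures exist'') rather than proved. Two specific gaps remain. First, the splitting signature $\mathcal{D}'$ of an arbitrary upright tree with signature $\mathcal{I}$ need not be irreducible, so the inductive hypothesis cannot be invoked to rearrange the bottom face at such a stage; you name this as a ``delicate case'' but do not handle it. Second, your key observation that slides of bottom-face edges in $T$ agree with slides in $T_\flat$ relies on $T_\flat$ being a spanning tree of $Q_{[n-1]}$, and this is destroyed the moment a direction-$n$ exchange move is half-completed (after one edge moves up, the bottom part is a two-component forest); the bookkeeping for the elementary move itself --- one edge up, a different edge down, restoring a spanning bottom face realising the new splitting signature, all while staying inside $\eslidesig{\mathcal{I}}$ --- needs an explicit argument that is absent. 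As written, the proposal is a correct framing of the problem together with an honest list of the obstacles, not a proof.
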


This reduces the inductive step of a proof of Conjecture~\ref{conj:connectivity} to the problem of showing that every irreducible signature has a suitable splitting signature as given.  
Al Fran proves the existence of such a splitting signature for every  irreducible signature of $Q_4$, and (under the inductive hypothesis that every irreducible signature of $Q_{n-1}$ is connected) for every irreducible signature $\mathcal{I}=(a_1,\dots,a_n)$ admitting a \emph{unidirectional splitting signature}: a splitting signature $\mathcal{D}=(d_1,\dots,d_{n-1})$ such that $d_i=a_i$ for all but one index $i\leq n-1$. This proves the ``if'' direction of Conjecture~\ref{conj:connectivity} for $n=4$, and for irreducible signatures of $Q_5$ admitting a unidirectional splitting signature. Al Fran shows that when $\mathcal{I}$ does not admit a unidirectional splitting signature it admits a \emph{super rich splitting signature} (defined in terms of the excess), and conjectures such splitting signatures satisfy the requirements of Theorem~\ref{thm:reduction}.

Independently, Al Fran also proves the connectivity of two infinite families of irreducible signatures. For each $n\geq 3$ there is a unique ordered irreducible signature $\mathcal{I}_n^{(-1)}$ of $Q_n$ such that $\excess{k}{\mathcal{I}_n^{(-1)}}=1$ for all $k<n$; and for each $n\geq 4$ there is a unique ordered irreducible signature $\mathcal{I}_{(3,n)}^{(+1,-1)}$ with excess 2 for $k=2$, and excess 1 for $k<n$, $k\neq 2$. The first three members of these families are $(2,2,3)$, $(2,2,4,7)$, $(2,2,4,8,15)$; and $(2,3,3,7)$, $(2,3,3,8,15)$ and $(2,3,3,8,16,31)$, respectively. By~\cite[Thms~10.1.1 and~10.2.1]{alfran-2017} every signature in these families has a connected edge slide graph.

We conclude the paper with Theorem~\ref{thm:connectivity-saturated}, which reduces the quasi-irreducible case of Conjecture~\ref{conj:connectivity} to the irreducible case.

\begin{theorem}
\label{thm:connectivity-saturated}
Let the ordered signature $\mathcal{S}=(a_1, \dots, a_n)$ be saturated above direction $r$. Then $\mathcal{E}(\mathcal{S})$ is connected if and only if $\mathcal{E}(a_1, \dots, a_r)$ is connected. 

In particular, $\eslidesig{\mathcal{S}}$ is connected if and only if $\eslidesig{\unsat(S)}$ is connected. 
\end{theorem}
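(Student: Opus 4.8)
The plan is to derive the result directly from Corollary~\ref{cor:Esat}, using the standard fact that a Cartesian product of graphs is connected exactly when each of its factors is connected. Almost all of the real content already lives in Corollary~\ref{cor:Esat} and Observation~\ref{obs:contraction}; the only genuinely new ingredient is this elementary connectivity criterion for $\boxempty$.

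First I would set $R=[r]$ and $\mathcal{S}'=(a_1,\dots,a_r)$ and invoke Corollary~\ref{cor:Esat}, which (since $\mathcal{S}$ is saturated above direction $r$) gives the isomorphism
\[
\eslidesig{\mathcal{S}}\cong\eslidesig[Q_n/\bar{R}]{\mathcal{S}'}\boxempty Q_N,
\qquad N=2^r\bigl(2^{n-r}-(n-r)-1\bigr)\geq 0.
\]
Next I would record the product fact: for graphs $G$ and $H$ with nonempty vertex sets, $G\boxempty H$ is connected if and only if both $G$ and $H$ are connected. One direction is immediate, since a connected product surjects onto each factor under the coordinate projections; for the converse, a path in $G$ from $u_1$ to $v_1$ followed by a path in $H$ from $u_2$ to $v_2$ concatenates to a path in $G\boxempty H$ from $(u_1,u_2)$ to $(v_1,v_2)$. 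As $Q_N$ is a cube it is connected, so applying this criterion to the displayed isomorphism shows that $\eslidesig{\mathcal{S}}$ is connected if and only if $\eslidesig[Q_n/\bar{R}]{\mathcal{S}'}$ is connected.

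I would then apply Observation~\ref{obs:contraction} to the signature $\mathcal{S}'$ of $Q_R\cong Q_r$ to conclude that $\eslidesig[Q_n/\bar{R}]{\mathcal{S}'}$ is connected if and only if $\eslidesig{\mathcal{S}'}=\mathcal{E}(a_1,\dots,a_r)$ is connected. Chaining the two equivalences yields the first statement. For the ``in particular'' clause I would specialise to the boundary of the unsaturated part: writing $\unsat(\mathcal{S})=(a_1,\dots,a_s)$, Definition~\ref{defn:unsaturatedpart} tells us $\mathcal{S}$ is saturated above direction $s$, and $s\leq r<n$, so the first statement applies with $r$ replaced by $s$. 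This gives that $\eslidesig{\mathcal{S}}$ is connected if and only if $\mathcal{E}(a_1,\dots,a_s)=\eslidesig{\unsat(\mathcal{S})}$ is connected.

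I do not expect a substantive obstacle in this argument. The only points needing mild care are bookkeeping ones: that the edge slide graphs in question have nonempty vertex sets (which holds because a signature is by definition realised by a spanning tree, so $\mathcal{S}'$ is realised in both $Q_r$ and the contraction $\contractb{R}$), and the degenerate case $n-r=1$, in which $N=0$ and $Q_N$ is a single vertex. Since a single vertex is connected and $G\boxempty Q_0\cong G$, the product criterion still applies and the argument goes through unchanged.
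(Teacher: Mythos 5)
Your argument is correct, but it takes a genuinely different route from the paper's. The paper proves the theorem by induction on $n-r$, peeling off one saturated direction at a time: it writes $\mathcal{S}=(\mathcal{S}'',2^{n-1})$ with $\mathcal{S}''=(a_1,\dots,a_{n-1})$, identifies trees with signature $\mathcal{S}$ with labelled spanning trees of $Q_n/\{n\}$, and then argues directly (via Theorem~\ref{thm:reducibleslides}) that sequences of edge slides can be lifted from $Q_{n-1}$ to $Q_n$ and projected back, with the labels $\emptyset$ or $\{n\}$ adjustable by free slides in direction $n$. You instead apply Corollary~\ref{cor:Esat} once, in the form $\eslidesig{\mathcal{S}}\cong\eslidesig[Q_n/\bar{R}]{\mathcal{S}'}\boxempty Q_N$, combine it with the elementary fact that a Cartesian product of nonempty graphs is connected if and only if each factor is, and finish with Observation~\ref{obs:contraction}; no induction is needed, and the degenerate case $N=0$ and the nonemptiness of the factors are handled correctly. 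Both proofs ultimately rest on the same structural result (Theorem~\ref{thm:redR} and the slide characterisation behind it), so neither is more general; what your version buys is brevity and modularity --- it makes Corollary~\ref{cor:Esat} do the work it was built for, at the cost of importing the (standard, easily verified) product-connectivity criterion, which the paper's hands-on lifting argument avoids stating explicitly. Your treatment of the ``in particular'' clause, via the fact that a signature saturated above direction $r$ is also saturated above direction $s$ where $\unsat(\mathcal{S})=(a_1,\dots,a_s)$, is also sound.
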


\begin{proof}
We may write $\mathcal{S}=(\mathcal{S}',2^{n-1})$, where $\mathcal{S}'=(a_1, \dots, a_{n-1})$. Inductively, it suffices to show that $\eslidesig{\mathcal{S}}$ is connected if and only if $\eslidesig{\mathcal{S}'}$ is connected.

A spanning tree $T$ of $Q_n$ with signature $\mathcal{S}$ contains every edge of $Q_n$ in direction $n$, and under the isomorphism
\[
\Psi_{[n-1]}:\eslidesig{\red{[n-1]}}\to\eslidesig{Q_n/\{n\}} \boxempty (\eslide{1})^{\boxempty 2^{n-1}} 
                     \cong\eslidesig{Q_n/\{n\}}
\]
it corresponds to the spanning tree $T/\{n\}$ of $Q_n/\{n\}$ with signature $\mathcal{S'}$ obtained by contracting these edges. The graph $Q_n/\{n\}$ has underlying simple graph $Q_{n-1}$, with two parallel edges labelled $\emptyset$ and $\{n\}$ for each edge of $Q_{n-1}$. Thus, $T/\{n\}$ in turn corresponds to the spanning tree $T'=\capr{[n-1]}(T)$ of $Q_{n-1}$ with signature $\mathcal{S}'$, together with a choice of label $\emptyset$ or $\{n\}$ on every edge. 
Moreover, by Theorem~\ref{thm:reducibleslides} an edge $e$ of $T$ or $T/\{n\}$ can be slid in direction $i\in [n-1]$ if and only if the corresponding edge $\capr{[n-1]}(e)$ of $T'$ can be, and the label $\emptyset$ or $\{n\}$ can be freely changed at any time. 

Suppose that $\eslidesig{\mathcal{S}'}$ is connected, and let $T_1,T_2\in\eslidesig{\mathcal{S}}$. Since $\eslidesig{\mathcal{S}'}$ is connected there is a sequence of edge slides transforming $\capr{[n-1]}(T_1)$ into $\capr{[n-1]}(T_2)$. These edge slides may all be carried out in $Q_n$, to give a sequence of edge slides from $T_1$ to a tree $T_2'$ such that $\capr{[n-1]}(T_2')=\capr{[n-1]}(T_2)$. The trees $T_2, T_2'$ may differ only in the edge labels $\emptyset$ or $\{n\}$, and after a further series of edge slides in direction $n$ only these can be brought into agreement. Therefore $\eslidesig{\mathcal{S}}$ is connected. 

Conversely, suppose $\eslidesig{\mathcal{S}}$ is connected, and let $T_1,T_2\in\eslidesig{\mathcal{S}'}$. Choose spanning trees $T_1',T_2'$ of $Q_n$ such that $\capr{[n-1]}(T_i')=T_i$ for each $i$ (for example, by regarding $Q_{n-1}$ as a subgraph of $Q_n$, and adding all edges of $Q_n$ in direction $n$ to $T_i$ for each $i$). There is a sequence of edge slides in $Q_n$ transforming $T_1'$ into $T_2'$, and applying $\capr{[n-1]}$, these may all be carried out in $Q_{n-1}$ to transform $T_1$ into $T_2$.  Therefore $\eslidesig{\mathcal{S}'}$ is connected also. 
\end{proof}


\begin{thebibliography}{10}

\bibitem{alfran-2017}
Howida Al~Fran.
\newblock {\em The edge slide graph of the $n$--dimensional cube}.
\newblock PhD thesis, Massey University, New Zealand, 2017.

\bibitem{bernardi-2012}
Olivier Bernardi.
\newblock On the spanning trees of the hypercube and other products of graphs.
\newblock {\em Electron. J. Combin.}, 19(4):Paper 51, 16, 2012.

\bibitem{bona-2017}
Mikl\'os B\'ona.
\newblock {\em A walk through combinatorics: An introduction to enumeration and
  graph theory}.
\newblock World Scientific, 4th edition, 2017.
\newblock With a foreword by Richard Stanley.

\bibitem{bondy-murty-2008}
J.~A. Bondy and U.~S.~R. Murty.
\newblock {\em Graph theory}, volume 244 of {\em Graduate Texts in
  Mathematics}.
\newblock Springer, New York, 2008.

\bibitem{goddard-1996}
Wayne Goddard and Henda~C. Swart.
\newblock Distances between graphs under edge operations.
\newblock {\em Discrete Math.}, 161(1-3):121--132, 1996.

\bibitem{handbook-graphtheory2014}
Jonathan~L. Gross, Jay Yellen, and Ping Zhang, editors.
\newblock {\em Handbook of graph theory}.
\newblock Discrete Mathematics and its Applications (Boca Raton). CRC Press,
  Boca Raton, FL, second edition, 2014.

\bibitem{hall-1935}
Philip Hall.
\newblock On representatives of subsets.
\newblock {\em J. London Math.}, 10(2):26--30, 1935.

\bibitem{hatcher-at}
Allen Hatcher.
\newblock {\em Algebraic topology}.
\newblock Cambridge University Press, Cambridge, 2002.

\bibitem{henden-2011}
Lyndal Henden.
\newblock The edge slide graph of the 3-cube.
\newblock {\em Rose-Hulman Undergrad. Math J.}, 12(2):67--90, 2011.

\bibitem{martin-reiner-2003}
Jeremy~L. Martin and Victor Reiner.
\newblock Factorization of some weighted spanning tree enumerators.
\newblock {\em J. Combin. Theory Ser. A}, 104(2):287--300, 2003.

\bibitem{stanley-1999}
Richard~P. Stanley.
\newblock {\em Enumerative combinatorics. {V}ol. 2}, volume~62 of {\em
  Cambridge Studies in Advanced Mathematics}.
\newblock Cambridge University Press, Cambridge, 1999.
\newblock With a foreword by Gian-Carlo Rota and Appendix 1 by Sergey Fomin.

\bibitem{tuffley-2012}
Christopher Tuffley.
\newblock Counting the spanning trees of the 3-cube using edge slides.
\newblock {\em Australas. J. Combin.}, 54:189--206, 2012.

\end{thebibliography}
\end{document}